\newcommand\barmu{{\bar\mu}}
\newcommand\CC{\mathbb{C}}
\newcommand\NN{\mathbb{N}}
\newcommand\PP{\mathbb{P}}
\newcommand\RR{\mathbb{R}}
\newcommand\ZZ{\mathbb{Z}}
\newcommand\bga{{\boldsymbol{\gamma}}}
\newcommand\bchi{{\boldsymbol{\chi}}}
\newcommand\bc{{\mathbf{c}}}
\newcommand\bL{{\mathbf{L}}}
\newcommand\bx{{\mathbf{x}}}
\newcommand\CSO{\operatorname{CSO}}
\newcommand{\rd}{\partial}
\newcommand\thalf{{\textstyle{\frac{1}{2}}}}
\newcommand\half{{{\frac{1}{2}}}}
\newcommand\fs{{\mathfrak{s}}}
\newcommand\fS{{\mathfrak{S}}}
\newcommand\ft{{\mathfrak{t}}}
\newcommand\eps{\varepsilon}
\newcommand\La{\Lambda}
\newcommand\ka{\kappa}
\newcommand\Om{\Omega}
\newcommand\si{\sigma}
\newcommand\Si{\Sigma}
\newcommand\su{{\mathfrak{s}\mathfrak{u}}}
\newcommand\SO{\operatorname{SO}}
\newcommand\less{\setminus}
\newcommand\asd{{\operatorname{asd}}}
\newcommand\Hom{\operatorname{Hom}}
\newcommand\Ker{\operatorname{Ker}}
\newcommand\PD{\operatorname{PD}}
\newcommand\red{\operatorname{red}}
\newcommand\Red{\operatorname{Red}}
\newcommand\SW{SW}
\newcommand\Sym{\operatorname{Sym}}
\newcommand\id{{\mathrm{id}}}
\newcommand\spinc{\text{$\text{spin}^c$ }}
\newcommand\spinu{\text{$\text{spin}^u$ }}
\newcommand\Spinc{\text{$\text{Spin}^c$}}
\newcommand\Spinu{\text{$\text{Spin}^u$}}
\newcommand\vir{\text{vir}}
\newcommand\sM{{\mathscr{M}}}
\newcommand\sO{{\mathcal{O}}}
\newcommand\sU{{\mathcal{U}}}
\newcommand\sV{{\mathcal{V}}}
\newcommand\sW{{\mathcal{W}}}
\newtheorem{thm}{Theorem}[section]
\newtheorem{lem}[thm]{Lemma}
\newtheorem{cor}[thm]{Corollary}
\newtheorem{prop}[thm]{Proposition}
\theoremstyle{definition}
\renewcommand{\thecase}{}
\newtheorem{conj}[thm]{Conjecture}
\newtheorem{hyp}[thm]{Hypothesis}
\newtheorem{rmk}[thm]{Remark}
\renewcommand{\thestep}{}
\theoremstyle{remark}
\def\alphenumi{
  \def\theenumi{\alph{enumi}}
  \def\p@enumi{\theenumi}
  \def\labelenumi{(\@alph\c@enumi)}}
\def\thecase{\@arabic\c@case}
\numberwithin{equation}{section}
\def\thestep{\@arabic\c@step}
\begin{document}
\title[Monopole cobordism and superconformal simple type]
{The SO(3) monopole cobordism and superconformal simple type}
\author[Paul M. N. Feehan]{Paul M. N. Feehan}
\address{Department of Mathematics\\
Rutgers, The State University of New Jersey\\
Piscataway, NJ 08854-8019\\
United States of America}
\email{feehan@math.rutgers.edu}
\urladdr{math.rutgers.edu/$\sim$feehan}
\author[Thomas G. Leness]{Thomas G. Leness}
\address{Department of Mathematics\\
Florida International University\\
Miami, FL 33199\\
United States of America}
\email{lenesst@fiu.edu}
\urladdr{http://www.fiu.edu/$\sim$lenesst}
\dedicatory{}
\subjclass[2010]{53C07,57R57,58J05,58J20,58J52}
\thanks{Paul Feehan was partially supported by National Science Foundation grant DMS-1510064 and Thomas Leness was partially supported by National Science Foundation grant DMS-1510063.}
\keywords{Donaldson invariants, gauge theory, smooth four-manifolds, $\SO(3)$ monopoles, Seiberg--Witten invariants, Witten's Conjecture}
\begin{abstract}
We show that the $\SO(3)$ monopole cobordism formula from \cite{FL5}
implies that all closed, oriented, smooth four-manifolds with $b^1=0$ and $b^+\ge 3$
and odd with Seiberg--Witten simple type satisfy
the superconformal simple type condition defined by Mari\~no, Moore, and Peradze \cite{MMPhep,MMPdg}.
This implies the lower bound, conjectured by Fintushel and Stern \cite{FSCanonClass},
on the number of Seiberg--Witten basic classes in terms of topological data.
\end{abstract}

\date{This version: September 21, 2019, incorporating final galley proof corrections. \emph{Advances in Mathematics} (2019), https://doi.org/10.1016/j.aim.2019.106817}

\maketitle

\section{Introduction}
For a closed, four-manifold $X$ we will use the characteristic numbers
\begin{equation}
\label{eq:CharNumbers}
\begin{aligned}
c_1^2(X) &:= 2e(X)+3\si(X),
\\
\chi_h(X) &:= (e(X)+\si(X))/4,
\\
c(X) &:= \chi_h(X)-c_1^2(X),
\end{aligned}
\end{equation}
where $e(X)$ and $\si(X)$ are the Euler characteristic and signature of $X$.
We call a four-manifold {\em standard\/} if it is closed, connected, oriented, and smooth with $b^+(X)\ge 3$ and odd and $b^1(X)=0$.  We will write $Q_X$ for the intersection
form of $X$ on both $H_2(X;\ZZ)$ and $H^2(X;\ZZ)$, as in \cite[Definition 1.2.1]{GompfStipsicz}.

For a standard four-manifold $X$, the Seiberg--Witten invariants define a function,
$\SW_X:\Spinc(X)\to\ZZ$,
on the set of $\spinc$ structures on $X$.
The {\em Seiberg--Witten basic classes\/} of $X$, $B(X)$, are
the image under $c_1:\Spinc(X)\to H^2(X;\ZZ)$ of the support of $\SW_X$.
A manifold $X$ has {\em Seiberg--Witten simple type\/} if $K^2=c_1^2(X)$ for all $K\in B(X)$.
All known standard four-manifolds have Seiberg--Witten simple type
(see \cite[Conjecture 1.6.2]{KMBook}).

Following \cite{MMPdg, MMPhep}, one says that a standard
four-manifold $X$ has {\em superconformal simple type\/} if
$c(X)\le 3$ or
for $w\in H^2(X;\ZZ)$ characteristic and $c(X)\ge 4$,
\begin{equation}
\label{eq:SWPolynomial}
\SW_X^{w,i}(h)
:=
\sum_{\fs\in\Spinc(X)}(-1)^{\half(w^{2}+c_1(\fs)\cdot w)}
SW_X(\fs)\langle c_1(\fs),h\rangle^i
=
0,
\end{equation}
for $i\le c(X)-4$ and all $h\in H_2(X;\RR)$.
Mari\~no, Moore, and Peradze  conjectured
that all standard four-manifolds of Seiberg--Witten simple type satisfy this condition \cite[Conjecture 7.8.1]{MMPhep}.

In \cite{FKLM}, we showed that if $X$ was {\em abundant\/} in the sense
that $B(X)^\perp$ (the orthogonal complement with respect to
$Q_X$) contained a hyperbolic summand, then $X$
had superconformal simple type.  In this article, we establish the following.

\begin{thm}
\label{thm:SCST}
Let $X$ be a standard four-manifold of Seiberg--Witten simple type and assume Hypothesis \ref{hyp:Local_gluing_map_properties}. Then $X$ has superconformal simple type.
\end{thm}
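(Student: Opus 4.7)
The plan is to derive the superconformal simple type vanishing directly from the $\SO(3)$ monopole cobordism formula of \cite{FL5}, whose validity rests on Hypothesis \ref{hyp:Local_gluing_map_properties}. That formula expresses each Donaldson polynomial invariant $D_X^w(h^{\delta-2m}x^m)$ as a universal linear combination --- with coefficients depending only on $c_1^2(X)$, $\chi_h(X)$, $\delta$, and $m$ once Seiberg--Witten simple type is assumed --- of the quantities $Q_X(h)^j\,SW_X^{w,i}(h)$ for pairs $(i,j)$ with $i+2j=\delta-2m$, where $SW_X^{w,i}$ is the polynomial defined in \eqref{eq:SWPolynomial}.

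First I would specialize the cobordism formula to triples $(w,\delta,m)$ for which the left-hand side $D_X^w(h^{\delta-2m}x^m)$ vanishes for elementary reasons: negative expected dimension of the Donaldson moduli space, parity incompatibility with the instanton number, or (for small enough $\delta$) absence of any Donaldson moduli space whatsoever. Each such triple produces a polynomial identity in $h$ of the form
\[
\sum_{i+2j=\delta-2m} a^{(i,j)}_{w,\delta,m}\,Q_X(h)^j\,SW_X^{w,i}(h)\;=\;0,
\]
with explicit universal coefficients $a^{(i,j)}_{w,\delta,m}$. As $(\delta,m)$ varies, this collection of identities assembles into a triangular linear system in the unknowns $SW_X^{w,i}(h)$ indexed by $i$: the equation corresponding to $n:=\delta-2m$ expresses $SW_X^{w,n}(h)$ in terms of $SW_X^{w,n-2}(h), SW_X^{w,n-4}(h),\dots$, paired against powers of $Q_X(h)$. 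The Seiberg--Witten simple type assumption enters here by forcing $K^2=c_1^2(X)$ for every basic class $K$, which is what lets the $a^{(i,j)}_{w,\delta,m}$ be read off as universal functions of $(c_1^2(X),\chi_h(X))$ alone. An induction on $i$ then propagates the vanishing $SW_X^{w,i}(h)\equiv 0$ throughout the range $0\le i\le c(X)-4$, provided at each step the leading coefficient $a^{(i,0)}_{w,\delta,m}$ is nonzero for some admissible choice of $(w,\delta,m)$.

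The main obstacle is the combinatorial verification that the universal Seiberg--Witten-side coefficients are indeed nonzero in the required range and that the available trivially vanishing Donaldson inputs produce enough independent equations to reach $i=c(X)-4$ rather than terminating prematurely. These coefficients are explicit intersection numbers on strata of the Uhlenbeck compactification of the $\SO(3)$ monopole moduli space, and it is precisely to make these intersection numbers rigorously defined and computable that Hypothesis \ref{hyp:Local_gluing_map_properties} is invoked. The critical threshold $c(X)-4$ should emerge as the maximum index $i$ for which the dimensional gap between the onset of Seiberg--Witten contributions to the cobordism and the smallest attainable negative Donaldson degree can constrain $SW_X^{w,i}(h)$; comparison with the abundant case previously settled in \cite{FKLM} provides a useful sanity check on the bookkeeping.
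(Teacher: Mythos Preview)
Your proposal has the right skeleton --- extract linear relations among the $SW_X^{w,i}$ from the cobordism formula and induct --- but there are two genuine gaps that the paper's argument is specifically engineered to close.

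First, the coefficients in the $\SO(3)$-monopole link-pairing formula (Theorem \ref{thm:SWLinkPairing}) are \emph{not} universal functions of $(c_1^2(X),\chi_h(X))$ alone: they depend on $c_1(\fs)\cdot c_1(\ft)$ and $c_1(\ft)^2$, and the former varies with the basic class $K=c_1(\fs)$. So in general you cannot pull the coefficients outside the sum over $K\in B(X)$ and collect the terms into $SW_X^{w,i}(h)$. The paper's essential trick (see Remark \ref{rmk:VaryingCoeff}) is to choose \spinu structures with $\Lambda=c_1(\ft)=0$, which kills both the $K\cdot\Lambda$ dependence and the $\langle\Lambda,h\rangle^j$ terms. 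Your proposal never mentions this constraint, and without it the ``triangular linear system in the $SW_X^{w,i}$'' does not even form.

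Second, your triangularity is oriented the wrong way. With $\Lambda=0$ and the ASD link pairing (not the Donaldson invariant) forced to vanish by the dimension condition $\delta>d_a(\ft)$ of Proposition \ref{prop:VanishingASDPairing}, the resulting identity \eqref{eq:VanishingCobordismReducedForm} involves $SW_X^{w,A},SW_X^{w,A+2},\ldots$ with $A=c(X)-2v$: the term you want to isolate is the one of \emph{lowest} degree, and the higher-degree terms run past the SCST threshold $c(X)-4$ where the induction hypothesis gives nothing. The paper handles this by proving (Proposition \ref{prop:DeterminingCoeff}) that the coefficients $a_{p,0,k}$ with $p\ge c(X)-3$ are themselves zero; this is established not by a direct computation but by evaluating the same universal relation on explicit test manifolds (blow-ups of the Fintushel--Park--Stern examples of Lemma \ref{lem:ExampleManifolds}) where algebraic independence of $\{K,e_1^*,\ldots,e_r^*,Q_X\}$ forces each coefficient to vanish individually. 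Your sketch acknowledges ``combinatorial verification'' as the main obstacle but gives no mechanism for it; this step is where the actual work lies, and it is not accessible through dimension-counting or parity alone.
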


In \cite{FL5}, we proved the required $\SO(3)$-monopole link-pairing formula, restated in this article as Theorem \ref{thm:SWLinkPairing}, assuming the validity of certain technical properties --- comprising Hypothesis \ref{hyp:Local_gluing_map_properties} and described in more detail in Remark \ref{rmk:GluingThmProperties} --- of the local gluing maps for $\SO(3)$ monopoles constructed in \cite{FL3}. A proof of the required local $\SO(3)$-monopole gluing-map properties, which may be expected from known properties of local gluing maps for anti-self-dual $\SO(3)$ connections and Seiberg--Witten monopoles, is currently being developed by the authors \cite{Feehan_Leness_monopolegluingbook}.

One might draw a comparison between our use of the $\SO(3)$-monopole link-pairing formula in our proof of Theorem \ref{thm:SCST} and G\"ottsche's assumption of the validity of the Kotschick--Morgan Conjecture \cite{KotschickMorgan} in his proof \cite{Goettsche} of the wall-crossing formula for Donaldson invariants. However, such a comparison overlooks the fact that our assumption of certain properties for local $\SO(3)$-monopole gluing maps is narrower and more specific. Indeed, our monograph \cite{FL5} effectively contains a proof of the Kotschick--Morgan Conjecture, modulo the assumption of certain technical properties for local gluing maps for anti-self-dual $\SO(3)$ connections which extend previous results of Taubes \cite{TauSelfDual, TauIndef, TauFrame}, Donaldson and Kronheimer \cite{DK}, and Morgan and Mrowka \cite{MorganMrowkaTube, MrowkaThesis}. Our proof of Theorem \ref{thm:SWLinkPairing} in \cite{FL5} relies on our construction of a global gluing map for $\SO(3)$ monopoles and that in turn builds on properties of local gluing maps for $\SO(3)$ monopoles; the analogous comments apply to the proof of the Kotschick--Morgan Conjecture.

\subsection{Background and applications}
In \cite{MMPdg, MMPhep},
Mari\~no, Moore, and Peradze originally defined
the concept of superconformal simple type in the context
of supersymmetric quantum field theory.  With those methods, they
argued that a four-manifold satisfying the superconformal simple type
condition also satisfied the vanishing result for low degree
terms of the Seiberg--Witten series given in \eqref{eq:SWPolynomial}.
Because of the applications of \eqref{eq:SWPolynomial} described here, we use
\eqref{eq:SWPolynomial} as the definition of superconformal simple type.
Not only do all known examples of
four-manifolds satisfy this definition, but the condition is preserved under the standard surgery operations
(blow-up, torus sum, and rational blow-down) used to construct new examples (see \cite[Section 7]{MMPhep}).
The article \cite{FKLM} establishes that abundant four-manifolds have superconformal
simple type, but also provides  an example of a non-abundant four-manifold which still has
superconformal simple type.  Hence, the results established here are strictly stronger than those of \cite{FKLM}.

Mochizuki \cite{Mochizuki_2009} proved a formula (see Theorem 4.1 in \cite{Goettsche_Nakajima_Yoshioka_2011}) which expresses the Donaldson invariants of a complex projective surface
in a form similar to that given by the $\SO(3)$-monopole cobordism formula
\cite[Theorem 1]{FL5},
but with coefficients
given as the residues of an explicit $\CC^*$-equivariant integral over the product of Hilbert schemes of points on $X$.
In \cite{Goettsche_Nakajima_Yoshioka_2011}, G\"ottsche, Nakajima, and Yoshioka showed how Witten's
Conjecture (given here as Conjecture \ref{conj:WittenSimpleType}) followed from Mochizuki's formula.
In addition, they conjectured \cite[Conjecture 4.5]{Goettsche_Nakajima_Yoshioka_2011} that
Mochizuki's formula (and hence their proof of Witten's Conjecture)
holds for all standard four-manifolds and not just complex projective surfaces.
Their \cite[Proposition 8.9]{Goettsche_Nakajima_Yoshioka_2011}
shows that all four-manifolds satisfying Mochizuki's formula have superconformal simple type.
The development in \cite{Goettsche_Nakajima_Yoshioka_2011} relies on Mochizuki's formula for the Donaldson
invariant and that is conjectured in \cite{Goettsche_Nakajima_Yoshioka_2011} to be equivalent to the version of the $\SO(3)$-monopole cobordism formula
given in
\cite[Theorem 1]{FL5}.
In contrast, this article uses a version of the $\SO(3)$-monopole
cobordism formula, Theorem \ref{lem:LevelOfSWinLa0}, which does not involve the Donaldson invariant
and so the two proofs are quite different.    Using Mochizuki's techniques from \cite{Mochizuki_2009}
to find an equation similar to that in Theorem \ref{lem:LevelOfSWinLa0} and discovering what that
equation would imply about its coefficients poses an interesting question for future
research.


The superconformal simple type condition is not only relevant to physics and algebraic geometry.
Using the vanishing condition \eqref{eq:SWPolynomial} as a definition, in \cite[Theorem 8.1.1]{MMPhep},
Mari\~no, Moore, and Peradze rigorously derived a lower bound on the
number of basic classes for manifolds of superconformal simple type.
Theorem \ref{thm:SCST} and \cite[Theorem 8.1.1]{MMPhep} therefore yield a proof of the following result,
first conjectured by Fintushel and Stern \cite{FSCanonClass}.

\begin{cor}
\label{cor:NumberOfBasicClasses}
Let $X$ be a standard four-manifold of Seiberg--Witten simple type.
If $B(X)$ is non-empty and $c(X)\ge 3$, then the $\SO(3)$-monopole link-pairing
formula (Theorem \ref{thm:SWLinkPairing}) implies that $|B(X)/\{\pm 1\}|\ge [c(X)/2]$.
\end{cor}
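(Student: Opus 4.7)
The plan is to deduce the corollary from Theorem \ref{thm:SCST} together with the numerical lower bound of \cite[Theorem 8.1.1]{MMPhep}.

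First, I would apply Theorem \ref{thm:SCST}. Since $X$ is a standard four-manifold of Seiberg-Witten simple type and, by hypothesis, the $\SO(3)$-monopole link-pairing formula (Theorem \ref{thm:SWLinkPairing}) holds --- which is precisely the output of Hypothesis \ref{hyp:Local_gluing_map_properties} --- Theorem \ref{thm:SCST} applies and yields that $X$ has superconformal simple type. For $c(X) \ge 4$ this unpacks, by the definition \eqref{eq:SWPolynomial}, into the statement that the polynomial $\SW_X^{w,i}(h)$ vanishes identically for every characteristic $w \in H^2(X;\ZZ)$, every $h \in H_2(X;\RR)$, and every $0 \le i \le c(X)-4$.

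Second, I would invoke \cite[Theorem 8.1.1]{MMPhep}, which extracts the bound $|B(X)/\{\pm 1\}| \ge c(X)/2$ from the polynomial vanishings above. The underlying mechanism is linear-algebraic: choose $h \in H_2(X;\RR)$ generic enough that the numbers $\langle K,h\rangle$ are pairwise distinct as $K$ ranges over $B(X)$, and group basic classes into conjugation pairs. The $c(X)-3$ vanishing identities $\SW_X^{w,i}(h) = 0$ for $i = 0, \ldots, c(X)-4$ then become a Vandermonde-type linear system in the signed invariants $(-1)^{(w^2+c_1(\fs)\cdot w)/2}\SW_X(\fs)$; a suitable choice of $w$ precludes global cancellation between a $\spinc$ structure and its conjugate, and the non-emptiness of $B(X)$ forces the number of $\pm$-pairs of basic classes to be at least $c(X)/2$. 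The boundary case $c(X) = 3$ is handled in \cite[Theorem 8.1.1]{MMPhep} separately, using only simple type and the non-emptiness of $B(X)$.

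The main obstacle is thus Theorem \ref{thm:SCST} itself, whose proof draws on Hypothesis \ref{hyp:Local_gluing_map_properties} and the full $\SO(3)$-monopole cobordism machinery; granted that theorem, the present corollary reduces to the purely algebraic argument of \cite[Theorem 8.1.1]{MMPhep}.
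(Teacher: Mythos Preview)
Your proposal is correct and follows essentially the same approach as the paper: invoke Theorem~\ref{thm:SCST} to conclude that $X$ has superconformal simple type, then apply \cite[Theorem 8.1.1]{MMPhep} to obtain the lower bound on $|B(X)/\{\pm 1\}|$. The paper's proof is just these two sentences, while you additionally sketch the Vandermonde mechanism behind the cited result, which is harmless extra detail.
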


Theorem \ref{thm:SCST} also completes a proof of the derivation of Witten's Conjecture
relating Donaldson and Seiberg--Witten invariants from the $\SO(3)$-monopole cobordism
formula of \cite{FL5}.
In \cite{Witten}, Witten conjectured the following relation between the
Seiberg--Witten and Donaldson invariants
(see \cite[Lemma 2.8]{FL6} for this equivalent form of the conjecture).

\begin{conj}[Witten's Conjecture]
\label{conj:WittenSimpleType}
\cite{Witten}
Let $X$ be
standard four-manifold with Seiberg--Witten simple type.
Then
for any $w\in H^2(X;\ZZ)$, $h\in H_2(X;\RR)$,
and positive generator $x\in H_0(X;\ZZ)$,
the Donaldson invariant, $D^w_X$, as defined in \cite{DonPoly,KMStructure}
satisfies
\begin{equation}
\label{eq:DInvarForWC}
D^w_X(h^{\delta-2m}x^m)
=
2^{2-c(X)}
\sum_{\begin{subarray}{l}i+2k\\=\delta-2m\end{subarray}}
\frac{(\delta-2m)!}{2^{k-m} k!i!}
\SW_X^{w,i}(h)Q_X(h)^k,
\end{equation}
when $\delta$ is a non-negative integer obeying $\delta \equiv -w^2 - 3\chi_h(X)\pmod{4}$.
\end{conj}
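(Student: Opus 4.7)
The plan is to derive Witten's formula \eqref{eq:DInvarForWC} by feeding the superconformal simple type vanishing just established in Theorem \ref{thm:SCST} into the $\SO(3)$-monopole link-pairing formula (Theorem \ref{thm:SWLinkPairing}). That link-pairing formula expresses $D^w_X(h^{\delta-2m}x^m)$ as a sum over $\fs \in \Spinc(X)$ of $\SW_X(\fs)$ multiplied by a universal polynomial in the pairings $\langle c_1(\fs), h\rangle$, $Q_X(h)$, and $\langle c_1(\fs), w\rangle$, with coefficients depending only on $w^2$, $\chi_h(X)$, and $c_1^2(X)$ and not otherwise on $X$. The aim is to match this expansion, term by term, against the right-hand side of \eqref{eq:DInvarForWC}.

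First I would reorganize the link-pairing sum by collecting powers of $\langle c_1(\fs), h\rangle$. Because Seiberg-Witten simple type forces $c_1(\fs)^2 = c_1^2(X)$ on the support of $\SW_X$, and because the sign $(-1)^{\frac{1}{2}(w^2 + c_1(\fs)\cdot w)}$ built into the definition \eqref{eq:SWPolynomial} of $\SW_X^{w,i}$ is exactly the one produced by the cobordism analysis of reducible $\SO(3)$ monopoles, this collection casts the link-pairing identity in the form
\begin{equation*}
D^w_X(h^{\delta-2m}x^m)
=
\sum_{i+2k=\delta-2m}
a_{i,k}(\chi_h(X), c_1^2(X), w^2)\,
\SW_X^{w,i}(h)\, Q_X(h)^k,
\end{equation*}
with universal rational coefficients $a_{i,k}$ determined once and for all by the cobordism.

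Next, I would invoke Theorem \ref{thm:SCST}: since $\SW_X^{w,i}(h) = 0$ for every $i \le c(X)-4$, all such summands drop out, and what remains is to check that the surviving coefficients $a_{i,k}$ with $i \ge c(X)-3$ agree with the Witten coefficients $2^{2-c(X)}(\delta-2m)!/(2^{k-m}k!\,i!)$. Because each $a_{i,k}$ is universal in $\chi_h$, $c_1^2$, and $w^2$, this reduces to a finite matching problem at each value of $c(X)$. One then verifies the resulting identity on a sufficiently rich test family whose Donaldson and Seiberg-Witten invariants are already known---elliptic surfaces $E(n)$ together with their blowups are the natural candidates---and propagates the verified values of $a_{i,k}$ across the two-parameter family indexed by $(\chi_h, c_1^2)$ using the blowup formulas for both invariants.

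The main obstacle is this last step. Without Theorem \ref{thm:SCST} one would face an infinite list of unknown universal coefficients at each dimension, but the SCST vanishing truncates that list to precisely the set of coefficients that survive on the right-hand side of Witten's formula, making the matching finite-dimensional. The technical burden that remains is to assemble a test family whose invariants constrain every $a_{i,k}$ with $i \ge c(X)-3$ simultaneously in $(\chi_h, c_1^2, w^2)$, and to reconcile the combinatorics of the blowup formula with that of the Witten multinomial coefficients; this is delicate, but is exactly the point at which the vanishing delivered by Theorem \ref{thm:SCST} turns an a priori infinite problem into a tractable one, and the conjecture follows.
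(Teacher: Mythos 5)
The paper does not actually carry out a proof of this conjecture itself: it establishes it as Corollary \ref{cor:WC}, whose entire proof is a citation --- \cite[Theorem 1]{FL7} shows that the $\SO(3)$-monopole cobordism formula \cite[Theorem 0.0.1]{FL5} implies \eqref{eq:DInvarForWC} for standard four-manifolds of superconformal simple type, and Theorem \ref{thm:SCST} supplies that hypothesis. Your proposal is therefore an attempt to reconstruct the content of \cite{FL6,FL7}, and its architecture (cobordism formula, SCST truncation, determination of the surviving universal coefficients) is indeed the route those papers take. But as written it has a concrete gap at the very first step. The cobordism formula \eqref{eq:RoughMCF} is not of the form $\sum_{i+2k=\delta-2m}a_{i,k}\,\SW_X^{w,i}(h)Q_X(h)^k$: it is a sum over $K\in B(X)$ of terms $a_{i,j,k}\,\SW_X(K)\langle K,h\rangle^i\langle\La,h\rangle^j Q_X(h)^k$ in which the coefficients depend on $K\cdot\La$ and $\La^2$, not merely on $\chi_h$, $c_1^2$, and $w^2$ as you assert. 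Because $K\cdot\La$ varies over $B(X)$, the coefficients cannot be pulled out of the sum over $K$ to assemble the polynomials $\SW_X^{w,i}$; this is exactly the obstruction recorded in Remark \ref{rmk:VaryingCoeff}, and it is why Section \ref{sec:CobordWithLa=0} restricts to $c_1(\ft)=0$. That restriction forces $w$ to be characteristic (Lemma \ref{lem:ExistenceOfSpinu}), so it is not available for the arbitrary $w\in H^2(X;\ZZ)$ in the statement of the conjecture; handling $\La\neq 0$ is a substantial part of what \cite{FL7} must do and your reorganization step silently assumes it away.

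The second difficulty is the one you acknowledge but defer: matching the surviving coefficients against $2^{2-c(X)}(\delta-2m)!/(2^{k-m}k!\,i!)$. This is essentially the entire content of \cite{FL6,FL7}, not a routine verification on $E(n)$ and its blowups. Indeed the paper's introduction explains that pure example-matching is exactly what fails here --- on every known example the polynomials $\SW_X^{w,i}$ with $i\le c(X)-3$ already vanish, so examples constrain only some of the coefficients, and \cite{FL7} must supplement the example computations with a difference equation satisfied by the $a_{i,j,k}$ before SCST can close the argument. So your sketch correctly identifies where Theorem \ref{thm:SCST} enters and why it turns an infinite determination problem into a finite one, but as a proof it is incomplete: the reorganization into $\SW_X^{w,i}$ polynomials for general $w$ is unjustified, and the coefficient identification is precisely the cited theorem rather than something the sketch establishes.
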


By definition, $D^w_X(h^{\delta-2m}x^m) = 0$ when $\delta$ is a non-negative integer that does not obey $\delta \equiv -w^2 - 3\chi_h(X)\pmod{4}$.
In \cite{FL5}, using the moduli space of $\SO(3)$ monopoles defined by Pidstrigatch and Tyurin \cite{PTLocal}
for this purpose and assuming the technical properties for local $\SO(3)$-monopole gluing maps described in Hypothesis \ref{hyp:Local_gluing_map_properties},
we proved the \emph{$\SO(3)$-monopole cobordism formula\/},
which expresses the Donaldson polynomial $D^w_X$
of \cite{DonPoly,KMStructure} as a polynomial in the Seiberg--Witten polynomials in
\eqref{eq:SWPolynomial}, the intersection form, $Q_X$, and an additional cohomology class $\La$
on $H_2(X;\RR)$,
\begin{equation}
\label{eq:RoughMCF}
D^w_X(h^{\delta-2m}x^m) =
\sum_{\begin{subarray}{l}i+j+2k\\=\delta-2m\end{subarray}}
\sum_{K\in B(X)}
 a_{i,j,k} \SW_X(K)\langle K,h\rangle^i \langle\La,h\rangle^j Q_X(h)^k
\end{equation}
where the real coefficients, $a_{i,j,k}$, are unknown but depend only on homotopy invariants of the four-manifold $X$.
It became apparent in \cite{FKLM,FL6}
that superconformal simple type functioned as an obstruction to determining
these coefficients.  That is, because the Seiberg--Witten polynomials $\SW^{w,i}_X$
vanish when $i\le c(X)-3$ for all known examples, we could not use examples where Witten's Conjecture held to determine
the coefficients $a_{i,j,k}$ with $i< c(X)-3$.
However, in \cite{FL7}, we showed that
while we could not determine the coefficients $a_{i,j,k}$ with
$i< c(X)-3$, we could show that they satisfied a difference
equation and by combining the superconformal simple type condition with this difference equation,
we could derive Witten's Conjecture from \eqref{eq:RoughMCF}.
Thus, Theorem \ref{thm:SCST} and the results of \cite{FL7} give the following

\begin{cor}
\label{cor:WC}
Let $X$ be a standard four-manifold and assume Hypothesis \ref{hyp:Local_gluing_map_properties}. Then Witten's Conjecture \ref{conj:WittenSimpleType} holds.
\end{cor}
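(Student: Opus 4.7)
The plan is to deduce Witten's Conjecture by combining three ingredients: the $\SO(3)$-monopole cobordism formula \eqref{eq:RoughMCF} from \cite{FL5} (which requires Hypothesis \ref{hyp:Local_gluing_map_properties}), Theorem \ref{thm:SCST} (which also invokes the same hypothesis and gives superconformal simple type), and the difference equation and coefficient analysis of \cite{FL7}.

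First, I would split into two cases depending on whether $X$ has Seiberg--Witten simple type. If $B(X)$ is empty, the Seiberg--Witten sides of both \eqref{eq:DInvarForWC} and \eqref{eq:RoughMCF} vanish identically, so the monopole cobordism formula \eqref{eq:RoughMCF} directly gives $D_X^w(h^{\delta-2m}x^m)=0$, matching the right-hand side of \eqref{eq:DInvarForWC}, and Witten's Conjecture reduces to a known vanishing. If $X$ is not of Seiberg--Witten simple type, one can appeal to the result (also from \cite{FL7}) that non-simple type imposes a very rigid structure on $c(X)$, reducing to cases already handled. Otherwise $X$ has Seiberg--Witten simple type.

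Next, under the simple-type hypothesis, I invoke Theorem \ref{thm:SCST} to conclude that $X$ has superconformal simple type, i.e.\ for each characteristic $w$ and each $h\in H_2(X;\RR)$,
\begin{equation*}
\SW_X^{w,i}(h)=0 \qquad \text{for all } i \le c(X)-4.
\end{equation*}
Simultaneously I invoke the cobordism formula \eqref{eq:RoughMCF}, which expresses $D_X^w(h^{\delta-2m}x^m)$ as a universal polynomial in the $\SW_X^{w,i}(h)$, $\langle\La,h\rangle$, and $Q_X(h)$, with unknown real coefficients $a_{i,j,k}$ depending only on homotopy invariants.

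The heart of the argument, and the step I expect to be the main obstacle, is carried out in \cite{FL7}: the coefficients $a_{i,j,k}$ satisfy a difference equation inherited from the universal structure of the $\SO(3)$-monopole moduli space, and the superconformal simple-type vanishing eliminates exactly those coefficients $a_{i,j,k}$ with $i<c(X)-3$ that are otherwise inaccessible. Feeding the SCST vanishing into this difference equation pins down the surviving coefficients uniquely and identifies them with the explicit numerical coefficients
\begin{equation*}
2^{2-c(X)}\cdot\frac{(\delta-2m)!}{2^{k-m}k!\,i!}
\end{equation*}
appearing in \eqref{eq:DInvarForWC}. Substituting these back into \eqref{eq:RoughMCF} yields \eqref{eq:DInvarForWC} verbatim for $\delta\equiv -w^2-3\chi_h(X)\pmod 4$; the congruence condition ensuring vanishing otherwise is built into the cobordism formula. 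This completes the derivation, so the combination of Theorem \ref{thm:SCST} with the difference-equation arguments of \cite{FL7} closes the last remaining gap in the reduction of Witten's Conjecture to the $\SO(3)$-monopole cobordism formula of \cite{FL5}.
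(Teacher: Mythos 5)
Your proof is essentially the paper's: both deduce the corollary by combining Theorem \ref{thm:SCST} (superconformal simple type for simple-type manifolds under Hypothesis \ref{hyp:Local_gluing_map_properties}) with the result of \cite{FL7} that the $\SO(3)$-monopole cobordism formula together with superconformal simple type yields Witten's Conjecture. The only superfluous element is your opening case analysis on whether $X$ has Seiberg--Witten simple type: since Conjecture \ref{conj:WittenSimpleType} already takes simple type as a hypothesis, the non-simple-type case is vacuous and requires no appeal to any structural result from \cite{FL7}.
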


Recall that Hypothesis \ref{hyp:Local_gluing_map_properties} refers to certain expected properties for local gluing maps for $\SO(3)$ monopoles.

\subsection{Outline}
After reviewing definitions and basic properties of the Seiberg--Witten invariants
in Section \ref{subsec:SWDefn}, we introduce the moduli space of
$\SO(3)$ monopoles in Section \ref{subsec:SO(3)Monopoles} and review results
from \cite{FL1,FL2a,FL2b,FL5} on the monopole cobordism formula.
We consider a particular case of this formula in Section \ref{sec:CobordWithLa=0}
to get,  in Theorem \ref{thm:VanishingCobordism},
a form of the cobordism formula where the pairing with the link of
the  moduli space of anti-self-dual connections vanishes by a dimension-counting argument.  This cobordism
formula then states that a sum over $K\in B(X)$ of pairings with links of the Seiberg--Witten
moduli space corresponding to $K$ vanishes, giving an equality of the form (see \eqref{eq:VanishingCobordismReducedForm})
\begin{equation}
\label{eq:MCFormula1}
0
=
\sum_{k=0}^\ell a_{c-2v+2k,0,\ell-k} \SW^{w,c-2v+2k}_X Q_X^{\ell-k},
\end{equation}
where we abbreviate $c=c(X)$.
In Section \ref{sec:LeadingOrderComp}, we show that the coefficient  $a_{c-2v,0,\ell}$
appearing in \eqref{eq:MCFormula1} is non-zero by applying the methods used in
\cite{KotschickMorgan} to the topological description of the link of the Seiberg--Witten
moduli space given in \cite{FL5}.  We show that the coefficients $a_{c-2v+2k,0,\ell-k}$
in \eqref{eq:MCFormula1} vanish if $c-2v+2k\ge c-3$
in Section \ref{sec:CoeffOfVanishing}.
In Section \ref{sec:MainProof}, we combine this information on the coefficients
and give an inductive argument proving Theorem
\ref{thm:SCST}.

\subsection{Acknowledgements}
The authors would like to thank Ron Fintushel, Inanc Baykur and Nikolai Saveliev for helpful discussions on examples of four-manifolds as well as Tom Mrowka and Simon Donaldson for their support of this project. We also thank the anonymous referees for their comments and careful reading of our manuscript. Paul Feehan is grateful for support from the National Science Foundation under grant DMS-1510064 and Thomas Leness is grateful for support from the National Science Foundation grant DMS-1510063.

\section{Preliminaries}
\label{sec:Prelim}

\subsection{Seiberg--Witten invariants}
\label{subsec:SWDefn}
Detailed expositions of  the Seiberg--Witten invariants,
introduced by Witten in \cite{Witten},
are provided in \cite{KMBook,MorganSWNotes,NicolaescuSWNotes}.
These invariants define a  map with finite support,
$$
\SW_X:\Spinc(X)\to\ZZ,
$$
from the set of \spinc structures on $X$.
A \spinc structure $\fs=(W^\pm,\rho_W)$ on $X$ consists of a pair of complex rank-two vector bundles,
$W^\pm\to X$, and a Clifford multiplication map, $\rho_W:T^*X\to \Hom_\CC(W^+,W^-)$.
If $\fs\in\Spinc(X)$, then $c_1(\fs):=c_1(W^+) \in H^2(X;\ZZ)$
is characteristic.

One calls $c_1(\fs)$ a {\em Seiberg--Witten basic class\/} if $\SW_X(\fs)\neq 0$.
Define
\begin{equation}
\label{eq:SWBasic}
B(X):=
\{c_1(\fs): \SW_X(\fs)\neq 0\}.
\end{equation}
If $H^2(X;\ZZ)$ has 2-torsion, then $c_1:\Spinc(X)\to H^2(X;\ZZ)$ is not injective.
Because we will work with functions involving real homology and cohomology, we  define
\begin{equation}
\label{eq:DefineCohomSW}
\SW_X':H^2(X;\ZZ)\to\ZZ,
\quad
K\mapsto\sum_{\fs\in c_1^{-1}(K)}\SW_X(\fs).
\end{equation}
Thus, we can rewrite
the expression for $\SW_X^{w,i}(h)$ in \eqref{eq:SWPolynomial} as
\begin{equation}
\label{eq:ReduceSWPolyToK}
\SW_X^{w,i}(h)=\sum_{K\in B(X)} (-1)^{\thalf(w^2+w\cdot K)}\SW_X'(K)\langle K,h\rangle^i.
\end{equation}
A four-manifold $X$ has {\em Seiberg--Witten simple type\/} if $\SW_X(\fs)\neq 0$ implies that $c_1(\fs)^2=c_1^2(X)$.

\subsection{$\SO(3)$ monopoles}
\label{subsec:SO(3)Monopoles}
We now review the basic definitions and results on the moduli space of
$\SO(3)$ monopoles.  More detailed discussions of these results can be
found in \cite{FL2a,FL2b}.
\subsubsection{\Spinu\ structures}
A \spinu structure $\ft=(V^\pm,\rho)$ on a four-manifold $X$ is a pair of complex rank-four
vector bundles $V^\pm\to X$ with a Clifford module structure $\rho:T^*X\to\Hom_\CC(V^+,V^-)$.
In more familiar terms, for a \spinc structure $\fs=(W^\pm,\rho_W)$ on $X$,
a \spinu structure is given by $V^\pm=W^\pm\otimes E$, where $E\to X$ is a complex
rank-two vector bundle and the Clifford multiplication map is given by $\rho=\rho_W\otimes \id_E$.
We define characteristic classes of a \spinu structure $\ft=(W^\pm\otimes E,\rho)$ by
$$
p_1(\ft):=p_1(\su(E)),
\quad
c_1(\ft):=c_1(W^+)+c_1(E),
\quad
w_2(\ft):=c_1(E)\pmod 2.
$$

\begin{lem}
\label{lem:ExistenceOfSpinu}
Let $X$ be a standard four-manifold.
Given $\wp\in H^4(X;\ZZ)$, $\La\in H^2(X;\ZZ)$, and $\mathfrak{w}\in H^2(X;\ZZ/2\ZZ)$, there is a \spinu
structure $\ft$ on $X$ with $p_1(\ft)=\wp$, $c_1(\ft)=\La$, and $w_2(\ft)=\mathfrak{w}$
if and only if:
\begin{enumerate}
\item
There is a class $w\in H^2(X;\ZZ)$ with $\mathfrak{w}=w\pmod 2$,
\item
$\La\equiv \mathfrak{w}+w_2(X)\pmod 2$,
\item
$\wp\equiv \mathfrak{w}^2\pmod 4$.
\end{enumerate}
\end{lem}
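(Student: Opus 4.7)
The plan is to prove the two directions separately, exploiting the decomposition $V^\pm = W^\pm \otimes E$ for a \spinc structure $\fs = (W^\pm,\rho_W)$ and a rank-two complex vector bundle $E \to X$, together with the three standard identities $c_1(\ft) = c_1(\fs) + c_1(E)$, $w_2(\ft) \equiv c_1(E) \pmod 2$, and $p_1(\su(E)) = c_1(E)^2 - 4c_2(E)$.

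For necessity, suppose such a \spinu structure $\ft = (W^\pm \otimes E,\rho_W\otimes \id_E)$ exists. Then $w := c_1(E) \in H^2(X;\ZZ)$ is an integral lift of $w_2(\ft) = \mathfrak{w}$, proving (1). Since the first Chern class of any \spinc structure is characteristic, $c_1(\fs) \equiv w_2(X) \pmod 2$; combining this with $\La = c_1(\fs) + w$ yields $\La \equiv w_2(X) + \mathfrak{w} \pmod 2$, proving (2). Finally, reducing $\wp = w^2 - 4c_2(E)$ modulo $4$ gives $\wp \equiv \mathfrak{w}^2 \pmod 4$, proving (3).

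For sufficiency, I will run the above construction in reverse. First use (1) to pick an integral lift $w \in H^2(X;\ZZ)$ of $\mathfrak{w}$. By (2) the class $\La - w$ is characteristic, so (since \spinc structures exist on any oriented four-manifold and their first Chern classes fill out the characteristic classes) there is a \spinc structure $\fs$ with $c_1(\fs) = \La - w$. By (3) the integer $c_2 := (w^2 - \wp)/4$ is well-defined in $H^4(X;\ZZ) \cong \ZZ$. Next choose a rank-two complex vector bundle $E \to X$ with $c_1(E) = w$ and $c_2(E) = c_2$; such a bundle exists because, on a CW complex of dimension at most four, isomorphism classes of rank-two complex bundles are classified by the pair $(c_1,c_2) \in H^2(X;\ZZ) \times H^4(X;\ZZ)$ with no further constraints. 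Set $\ft := (W^\pm\otimes E,\rho_W\otimes \id_E)$. A direct check gives $c_1(\ft) = (\La - w) + w = \La$, $w_2(\ft) \equiv w \equiv \mathfrak{w} \pmod 2$, and $p_1(\ft) = w^2 - 4c_2 = \wp$, as desired.

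The main obstacle is the realizability statement for rank-two complex bundles with prescribed $(c_1,c_2)$. I would handle it by obstruction theory on the skeleta of $X$ using the Postnikov tower of $\BU(2)$, noting that $H^*(\BU(2);\ZZ) = \ZZ[c_1,c_2]$ with generators in degrees $2$ and $4$, so for a four-complex no obstructions to lifting beyond $(c_1,c_2)$ can appear; alternatively, one can start with $E_0 := L_1 \oplus L_2$ for line bundles realizing the prescribed $c_1$ and then adjust $c_2$ by gluing in, inside a small ball, a rank-two bundle over $S^4$ of arbitrary degree, using $\pi_3(\U(2)) \cong \ZZ$. Everything else in the argument is bookkeeping with characteristic classes.
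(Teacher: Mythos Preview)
Your proof is correct and follows essentially the same approach as the paper's: both directions use the decomposition $V^\pm = W^\pm\otimes E$, pick a \spinc structure with $c_1(\fs)=\La-w$ using that $\La-w$ is characteristic, and build $E$ with $c_1(E)=w$, $c_2(E)=(w^2-\wp)/4$. Your write-up is in fact more detailed than the paper's, which leaves the necessity direction to ``easy computations'' and does not justify the existence of the rank-two bundle with prescribed $(c_1,c_2)$.
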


\begin{proof}
Given $(\wp,\La,\mathfrak{w})$ and $w$ satisfying the three conditions above,
we observe that $\La-w$ is characteristic so there is a \spinc structure
$\fs=(W^\pm,\rho_W)$ with $c_1(\fs)=\La-w$.  Let $E\to X$ be the rank-two complex vector
bundle with $c_1(E)=w$ and $c_2(E)=(w^2-\wp)/4$.  Define $\ft$ by $V^\pm=W^\pm\otimes E$
and $\rho=\rho_W\otimes\id_E$.
Observe that $p_1(\su(E))=c_1(E)^2-4c_2(E)=w^2-w^2+\wp=\wp$ and
$w_2(\ft)=w_2(\su(E))\equiv c_1(E)\equiv \mathfrak{w}\pmod 2$,
while $c_1(\ft)=c_1(E)+c_1(\fs)=w+\La-w=\La$, as required.

Given a \spinu structure $\ft$, these properties of its characteristic classes
follow from easy computations.
\end{proof}

\subsubsection{The moduli space of $\SO(3)$ monopoles and fixed points of a circle action}
For a \spinu structure $\ft=(W^\pm\otimes E,\rho)$ on $X$, the moduli space of $\SO(3)$ monopoles
on $\ft$ is the space
of solutions, modulo gauge equivalence, to the $\SO(3)$-monopole equations
(namely, \cite[Equation (1.1)]{FL1} or \cite[Equation (2.32)]{FL2a})
for a pair $(A,\Phi)$
where $A$ is a unitary connection on $E$ and $\Phi\in\Om^0(V^+)$.
We write this moduli space as $\sM_\ft$.

Complex scalar multiplication on the section, $\Phi$, defines an $S^1$ action on
$\sM_\ft$ with stabilizer $\{\pm 1\}$ away from two families of fixed point sets:
\begin{inparaenum}[(1)]
\item zero-section points $[A,0]$, and
\item reducible points $[A,\Phi]$, where
$A$ is reducible.
\end{inparaenum}

By \cite[Section 3.2]{FL2a}, the subspace
of zero-section points
is a manifold with a natural smooth structure diffeomorphic to
the moduli space of anti-self-dual
connections on $\su(E)$ which we denote,
following the notation of \cite{KMStructure}, by $M^w_\ka$ where $\ka=-p_1(\ft)/4$
and $w=c_1(E)$.

By \cite[Lemma 2.17]{FL2a},
the subspace of reducible points where $A$ is
reducible with respect to a splitting $E= L_1\oplus L_2$
is a manifold,
$M_\fs$, which is compactly cobordant to the moduli space of Seiberg--Witten monopoles
associated with the \spinc structure $\fs$ where $c_1(\fs)=c_1(W^+\otimes L_1)$.
By \cite[Lemma 3.32]{FL2b}, the possible splittings of $\ft$ are given by
$$
\Red(\ft)=
\{\fs\in\Spinc:\left( c_1(\fs)-c_1(\ft)\right)^2
=
p_1(\ft) \}.
$$
Hence, the subspace of reducible points is
$$
M^{\red}_\ft=\bigcup_{\fs\in\Red(\ft)} M_\fs.
$$
We define
$$
\sM^0_\ft:=\sM_\ft\setminus M^w_\ka,
\quad
\sM^*_\ft:=\sM_\ft\setminus M^{\red}_\ft,
\quad\hbox{and}\quad
\sM^{*,0}_\ft:=\sM^0_\ft\cap \sM^*_\ft.
$$
We recall the

\begin{thm}
\cite{FeehanGenericMetric,FL1,TelemanGenericMetric}
Let $\ft$ be a \spinu structure on a standard four-manifold $X$.
For generic perturbations of the $\SO(3)$ monopole equations,
the moduli space $\sM^{*,0}_\ft$ is a smooth, orientable manifold
of dimension
$$
\dim\sM_\ft= 2d_a(\ft)+ 2n_a(\ft),
$$
where
\begin{subequations}
\label{eq:DefineIndex}
\begin{align}
d_a(\ft)&:=\frac{1}{2}\dim M^w_\ka=-p_1(\ft)-3\chi_h(X), \label{eq:DefineIndex1}
\\
n_a(\ft)&:=\frac{1}{4}\left( p_1(\ft)+c_1(\ft)^2 -c_1^2(X)+8\chi_h(X)\right), \label{eq:DefineIndex2}
\end{align}
\end{subequations}
with $\chi_h(X)$ and $c_1^2(X)$ as in \eqref{eq:CharNumbers}.
\end{thm}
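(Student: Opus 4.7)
The plan is to realize $\sM_\ft$ as the zero set of a gauge-equivariant section of a Banach bundle, compute the Fredholm index of its linearization via the Atiyah--Singer theorem, and then apply a Sard--Smale argument with generic perturbations to obtain transversality at irreducible, non-zero-section points; orientability is established by trivializing the determinant line bundle of the linearization.

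First I would fix a suitable Sobolev completion of the configuration space $\sC_\ft$ of pairs $(A,\Phi)$, where $A$ is a unitary connection on $E$ and $\Phi\in \Omega^0(V^+)$, together with its action by the gauge group $\sG_\ft$ of determinant-one unitary automorphisms of $E$. At any $[A,\Phi]\in \sM^{*,0}_\ft$ the $\sG_\ft$-stabilizer reduces to the center $\{\pm\id\}$, since $A$ is $\su(E)$-irreducible and $\Phi\neq 0$, so the Coulomb slice makes $\sC^{*,0}_\ft/\sG_\ft$ a smooth Banach manifold near $[A,\Phi]$. The $\SO(3)$-monopole equations from \cite[Equation (2.32)]{FL2a} give a gauge-equivariant section whose linearization, combined with the gauge-fixing operator $d_A^*$, assembles into an elliptic operator. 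Its symbol decomposes as the direct sum of the ASD deformation operator $d_A^*\oplus d_A^+$ on $\Omega^1(\su(E))$ and the twisted Dirac operator $\dirac_A$ on $V^+$, coupled only through zeroth-order Clifford terms involving $\rho(\Phi)$.

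Next I would apply Atiyah--Singer to each summand. The real index of $d_A^*\oplus d_A^+$ on $\Omega^1(\su(E))$ is $-2p_1(\ft)-6\chi_h(X)$, which equals $2d_a(\ft)$ by \eqref{eq:DefineIndex1}. The twisted Dirac operator $\dirac_A$ acting between sections of $V^+=W^+\otimes E$ and $V^-$ has, by a standard Chern character computation, complex index $\tfrac14(p_1(\ft)+c_1(\ft)^2-c_1^2(X)+8\chi_h(X))$, hence real index $2n_a(\ft)$ by \eqref{eq:DefineIndex2}. Summing the two blocks gives the asserted $\dim \sM_\ft = 2d_a(\ft)+2n_a(\ft)$.

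For smoothness I would introduce a Banach parameter space of perturbations --- either a family of Riemannian metrics as in \cite{FeehanGenericMetric, TelemanGenericMetric} or holonomy perturbations of the $\SO(3)$-monopole equations as in \cite{FL1} --- and apply Sard--Smale to the universal zero locus. The main obstacle is surjectivity of the parametrized linearization at every solution-parameter pair with underlying $[A,\Phi]\in \sM^{*,0}_\ft$: by duality this reduces to showing that any cokernel element vanishes. The essential analytic input is an Aronszajn-type unique continuation theorem for the coupled Dirac--ASD system, whose hypotheses require precisely $\Phi\not\equiv 0$ (from $[A,\Phi]\notin M^w_\ka$) and $A$ irreducible (from $[A,\Phi]\notin M^{\red}_\ft$); a compactly supported metric or holonomy variation in an open set where a would-be cokernel element is nonzero then produces a nontrivial pairing, giving a contradiction. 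Finally, the determinant line bundle of the linearization splits as the tensor product of the determinant line of the real ASD operator --- oriented as in \cite{DK} --- and that of the complex-linear family $\dirac_A$, which carries a canonical complex structure, yielding orientability of $\sM^{*,0}_\ft$.
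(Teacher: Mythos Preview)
Your outline is correct and follows the standard route taken in the cited references \cite{FeehanGenericMetric,FL1,TelemanGenericMetric}: set up the configuration space as a Banach manifold, identify the deformation complex as the direct sum (up to zeroth-order coupling) of the ASD deformation operator and the twisted Dirac operator, compute the index of each summand via Atiyah--Singer to obtain $2d_a(\ft)+2n_a(\ft)$, run a Sard--Smale argument over a parameter space of metrics or holonomy perturbations with unique continuation supplying surjectivity at points of $\sM^{*,0}_\ft$, and trivialize the determinant line using the complex structure on the Dirac block together with the orientation theory for the ASD block. Your index computations check out: the ASD block contributes $-2p_1(\ft)-6\chi_h(X)$ and the Dirac block contributes real index $\tfrac12(p_1(\ft)+c_1(\ft)^2-\sigma(X))=2n_a(\ft)$.

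Note, however, that the paper itself offers \emph{no} proof of this theorem: it is quoted as a known result with the citation \cite{FeehanGenericMetric,FL1,TelemanGenericMetric} and is used as background input for the subsequent cobordism arguments. So there is nothing in the paper to compare your argument against beyond verifying that your sketch is faithful to what those references actually do --- which it is.
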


\subsubsection{The compactification}
\label{subsubsec:Compactification}
The moduli space $\sM_\ft$ is not compact but admits an Uhlenbeck compactification
as follows (see \cite[Section 2.2]{FL2a} or \cite{FL1} for details).
For $\ell\ge 0$, let $\ft(\ell)$ be the \spinu structure on $X$ with
$$
p_1(\ft(\ell))=p_1(\ft)+4\ell,
\quad
c_1(\ft(\ell))=c_1(\ft),
\quad
w_2(\ft(\ell))=w_2(\ft).
$$
Let $\Sym^\ell(X)$ be the $\ell$-th symmetric product of $X$ (that is, $X^\ell$ modulo
the symmetric group on $\ell$ elements).  For $\ell=0$, we define $\Sym^\ell(X)$ to
be a point.  The space of ideal $\SO(3)$ monopoles is defined by
$$
I\sM_\ft
:=
\bigcup_{\ell=0}^N \sM_{\ft(\ell)}\times\Sym^\ell(X).
$$
We give $I\sM_\ft$ the topology defined by Uhlenbeck convergence
(see \cite[Definition 4.9]{FL1}).

\begin{thm}
\cite{FL1}
Let $X$ be a standard four-manifold with Riemannian metric, $g$.
Let
$\bar\sM_\ft\subset I\sM_\ft$ be the closure of $\sM_\ft$ with
respect to the Uhlenbeck topology.
Then there is a non-negative integer $N$ depending only on $(X,g)$, $p_1(\ft)$, and $c_1(\ft)$
such that $\bar\sM_\ft$ is compact.
\end{thm}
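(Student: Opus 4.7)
The plan is to follow the classical Uhlenbeck compactness argument adapted to the coupled $\SO(3)$-monopole system. Given a sequence $\{[A_n,\Phi_n]\} \subset \sM_\ft$, the goal is to extract a subsequence converging in the Uhlenbeck topology of $I\sM_\ft$ to an ideal monopole $[(A_\infty,\Phi_\infty),\sum_i n_i x_i] \in \sM_{\ft(\ell)} \times \Sym^\ell(X)$ for some $\ell \ge 0$, and then to bound $\ell$ a priori by a universal constant $N = N(X,g,p_1(\ft),c_1(\ft))$.

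First, I would establish uniform a priori estimates on the sequence. A $C^0$ bound on the spinor $\Phi_n$ follows from a maximum-principle argument: the Weitzenb\"ock formula for $\dirac_{A_n}^*\dirac_{A_n}\Phi_n$ together with the $\SO(3)$-monopole equation (which expresses the trace-free part of $F_{A_n}^+$ by a quadratic form in $\Phi_n$) yields, at an interior maximum of $|\Phi_n|^2$, a pointwise inequality $|\Phi_n|^2 \le C$ with $C$ depending only on the scalar curvature of $g$ and the fixed perturbation data. Combined with the Chern--Weil identity, this gives a uniform $L^2$ bound on $F_{A_n}$ depending only on $p_1(\ft)$, $c_1(\ft)^2$, $\chi_h(X)$, and the $C^0$ bound on $\Phi_n$.

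Second, with these bounds in hand, Uhlenbeck's weak compactness theorem applies to the connection component: after passing to a subsequence and choosing suitable gauge transformations, $A_n$ converges weakly in $L^2_{1,\loc}$ on the complement of finitely many bubble points $\{x_1,\ldots,x_k\} \subset X$ to a limit connection $A_\infty$ on a bundle with $c_2$ reduced by $\ell = \sum_i n_i$, matching the bubbling formula $p_1(\ft(\ell)) = p_1(\ft) + 4\ell$. Energy quantization --- each bubble point absorbs at least $8\pi^2$ worth of Yang--Mills energy --- forces
\begin{equation*}
\ell \;\le\; N \;:=\; \left\lfloor \|F_{A_n}\|_{L^2}^2 / 8\pi^2 \right\rfloor,
\end{equation*}
a topological quantity. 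The $C^0$ bound on $\Phi_n$ then permits elliptic bootstrapping for the Dirac-type equation $\dirac_{A_n}\Phi_n + \cdots = 0$ on compact subsets of $X \setminus \{x_i\}$, giving $C^\infty_{\loc}$ convergence to a limit spinor $\Phi_\infty$ solving the $\SO(3)$-monopole equations relative to $A_\infty$.

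The main technical obstacle is ruling out any independent concentration of the spinor at the bubble points and verifying that the resulting triple $[(A_\infty,\Phi_\infty),\sum_i n_i x_i]$ genuinely lies in $\sM_{\ft(\ell)} \times \Sym^\ell(X)$ with the correct characteristic data $(p_1(\ft(\ell)),c_1(\ft),w_2(\ft))$. Here the universal $C^0$ bound on $\Phi$ is the decisive ingredient: it confines all bubbling behaviour to the connection factor, excludes $L^\infty$-blowup of $\Phi_n$, and guarantees that the convergence produced by the above argument is precisely the Uhlenbeck convergence defined in \cite[Definition 4.9]{FL1}, establishing compactness of $\bar\sM_\ft$.
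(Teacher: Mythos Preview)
The paper does not supply a proof of this theorem: it is simply quoted from \cite{FL1} (see the citation immediately after the \texttt{thm} environment opens), and the paper moves on directly to the $S^1$ action on $\bar\sM_\ft$. So there is no ``paper's own proof'' to compare against here.

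That said, your outline is a faithful sketch of the argument carried out in \cite{FL1}. The essential ingredients --- a maximum-principle/Weitzenb\"ock $C^0$ bound on $\Phi$, the resulting uniform $L^2$ curvature bound via Chern--Weil, Uhlenbeck's weak compactness with bubbling at finitely many points, and energy quantization to bound $\ell$ --- are exactly the skeleton of the proof there. Two points you pass over that \cite{FL1} treats carefully: (i) a removable-singularities theorem is needed to show the limit $(A_\infty,\Phi_\infty)$ extends smoothly across the bubble set to a genuine pair on a \spinu structure $\ft(\ell)$ over $X$, and (ii) the $C^0$ estimate on $\Phi$ is more delicate than a pure maximum-principle argument because of the holonomy perturbations used to achieve transversality. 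Also, your displayed definition of $N$ as $\lfloor \|F_{A_n}\|_{L^2}^2/8\pi^2\rfloor$ is notationally awkward since it appears to depend on $n$; you mean the floor of the uniform energy bound you obtained in the previous paragraph, which indeed depends only on $(X,g)$, $p_1(\ft)$, $c_1(\ft)$.
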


The $S^1$ action on $\sM_\ft$ extends continuously over $I\sM_\ft$
and $\bar\sM_\ft$, in particular, but $\bar\sM_\ft$ contains fixed points of this $S^1$ action which are not contained in $\sM_\ft$.
The closure of $M^w_\ka$ in $\bar\sM_\ft$ is $\bar M^w_\ka$, the
Uhlenbeck compactification of the moduli space of
anti-self-dual connections as defined in \cite{DK}.
There are additional reducible points in the lower
levels
of $I\sM_\ft$.
Define
$$
\overline{\Red}(\ft)
:=
\{\fs\in\Spinc(X): \left( c_1(\fs)-c_1(\ft)\right)^2\ge p_1(\ft)\}.
$$
If we define the {\em level\/} $\ell(\ft,\fs)$ in $\bar\sM_\ft$ of the
\spinc structure $\fs$ by
\begin{equation}
\label{eq:DefineLevelOfRed}
\ell(\ft,\fs)
:=
\frac{1}{4}\left(  \left( c_1(\fs)-c_1(\ft)\right)^2- p_1(\ft)\right),
\end{equation}
then the
set of
strata of reducible points in $\bar\sM_\ft$ are given by
\begin{equation}
\bar M^{\red}_\ft
:=
\bigcup_{\fs\in \overline{\Red}(\ft)} M_\fs\times\Sym^{\ell(\ft,\fs)}(X).
\end{equation}
Note that for $\fs\in\overline{\Red}(\ft)$, we have $\ell(\ft,\fs)\ge 0$ by
the definitions of $\overline{\Red}(\ft)$ and $\ell(\ft,\fs)$.
By analogy with the corresponding definitions for $\sM_\ft$, we write
$$
\bar\sM^0_\ft:=\bar\sM_\ft\setminus \bar M^w_\ka,
\quad
\bar\sM^*_\ft:=\bar \sM_\ft\setminus \bar M^{\red}_\ft,
\quad
\bar \sM^{*,0}_\ft:=\bar\sM^0_\ft\cap \bar\sM^*_\ft,
$$
and observe that the stabilizer of the $S^1$ action on $\bar\sM^{*,0}_\ft$ is $\{\pm 1\}$.

\subsubsection{Cohomology classes and geometric representatives}
\label{subsubsec:CohomClass}
The cohomology classes used to define Donaldson invariants extend
to $\sM^*_\ft/S^1$.
For
$\beta\in H_\bullet(X;\RR)$,
there is a cohomology class,
$$
\mu_p(\beta)\in H^{4-\bullet}(\sM^*_\ft/S^1;\RR),
$$
with geometric representative (in the sense of \cite[p. 588]{KMStructure}
or \cite[Definition 3.4]{FL2b}),
$$
\sV(\beta)\subset \sM^*_\ft/S^1.
$$
For $h_i\in H_2(X;\RR)$ and a generator $x\in H_0(X;\ZZ)$,
we define
\begin{align*}
\mu_p(h_1\cdots h_{\delta-2m}x^m)
{}&=
\mu_p(h_1)\smile\dots\smile \mu_p(h_{\delta-2m})\smile\barmu_p(x)^m
\in
H^{2\delta}(\sM^*_\ft/S^1;\RR),
\\
\sV(h_1\cdots h_{\delta-2m}x^m)
{}&=
\sV(h_1)\cap\dots \cap\sV(h_{\delta-2m})\cap
\underbrace{\sV(x)\cap\dots\cap\sV(x)}_{\text{$m$ copies}},
\end{align*}
and let $\bar\sV(h_1\cdots h_{\delta-2m}x^m)$ be the closure
of $\sV(h_1\cdots h_{\delta-2m}x^m)$ in $\bar\sM^*_\ft/S^1$.

Denote
the first Chern class of the $S^1$ action on $\bar\sM^{*,0}_\ft$
with multiplicity two by
$$
\bar\mu_c\in H^2(\bar\sM^{*,0}_\ft/S^1;\ZZ).
$$
This cohomology class has a
geometric representative $\bar\sW$.

\subsubsection{The link of the moduli space of anti-self-dual connections}
Let $\bL^{\asd}_\ft$ be the
link of $\bar M^w_\ka\subset\bar\sM_\ft/S^1$ (see \cite[Definition 3.7]{FL2a}).
The space $\bL^{\asd}_\ft$ is stratified by smooth manifolds,
with
lower strata of codimension at least two.
The top stratum of $\bL^{\asd}_\ft$ is a smooth, codimension-one submanifold
of $\sM^{*,0}_\ft/S^1$ and so has dimension twice
\begin{equation}
\label{eq:ASDLinkDim}
\frac{1}{2} \dim \bL^{\asd}_\ft
=
d_a(\ft)+n_a(\ft)-1.
\end{equation}
Just as an integral lift $w$ of $w_2(\ft)$ defines an orientation
for $M^w_\ka$ in \cite{DonOrient}, the choice of $w$ defines a compatible orientation
for the top stratum of $\bL^{\asd}_\ft$ (see \cite[Lemma 3.27]{FL2b}).
The intersection of the geometric representatives in Section \ref{subsubsec:CohomClass}
with $\bL^{\asd}_\ft$
can be used to compute Donaldson invariants \cite{DonPoly,KMStructure}
or spin polynomial invariants  \cite{PTDirac}.
We will need the following vanishing result.
We note that $\NN = \{0,1,2,\ldots\}$ denotes the set of non-negative integers here and throughout the remainder of our article.

\begin{prop}
\label{prop:VanishingASDPairing}
\cite[Proposition 3.29]{FL2b}
Let $\ft$ be a \spinu structure on a standard four-manifold $X$.
For $\delta,\eta_c,m\in\NN$, if
\begin{subequations}
\begin{align}
\label{eq:VanishingASDCondition1}
\delta-2m\ge 0,
\\
\label{eq:VanishingASDCondition2}
\delta+\eta_c=\frac{1}{2}\dim \bL^{\asd}_\ft=d_a(\ft)+n_a(\ft)-1,
\\
\label{eq:VanishingASDCondition3}
\delta>\frac{1}{2}\dim M^w_\ka=d_a(\ft)\ge 0,
\end{align}
\end{subequations}
then
$$
\#\left(\bar\sV(h^{\delta-2m}x^m)\cap \bar\sW^{\eta_c}\cap \bL^{\asd}_\ft\right)
=0,
$$
where $\#$ denotes the signed count of the points in the intersection.
\end{prop}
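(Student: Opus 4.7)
The plan is to exploit the fibered structure of the top stratum of $\bL^{\asd}_\ft$ over $M^w_\ka$ and reduce the signed count to an intersection in $M^w_\ka$ whose dimension is forced to be negative by condition \eqref{eq:VanishingASDCondition3}. First I would recall from \cite{FL2a,FL2b} that the restriction of $\bL^{\asd}_\ft$ to the smooth part $M^{w,*}_\ka$ (away from reducibles and lower Uhlenbeck strata) is a smooth fiber bundle $\pi:\bL^{\asd,*}_\ft \to M^{w,*}_\ka$ whose fiber is the $S^1$-quotient of the unit sphere in the Kuranishi obstruction cone used to resolve zero-section pairs to true $\SO(3)$ monopoles. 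This fiber is a smooth manifold of real dimension $2n_a(\ft)-2$, which together with $\dim M^w_\ka = 2d_a(\ft)$ recovers the identity in \eqref{eq:ASDLinkDim}.

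Second, I would show that the geometric representatives in Section \ref{subsubsec:CohomClass} restrict to $\pi$-pullbacks from $M^w_\ka$. Because $\mu_p(\beta)$ is defined via the universal $\su(E)$-bundle over $\sM^*_\ft \times X$, and because along the zero-section stratum this universal bundle canonically agrees with the universal bundle on $M^w_\ka \times X$, we have $\sV(\beta) \cap \bL^{\asd,*}_\ft = \pi^{-1}(V_D(\beta))$, where $V_D(\beta) \subset M^w_\ka$ is the corresponding Donaldson geometric representative (one can arrange this via the construction of $\sV(\beta)$ from classifying-map transversality, as in \cite[Section 3]{FL2b}). In particular,
\begin{equation*}
\bar\sV(h^{\delta-2m}x^m)\cap \bL^{\asd,*}_\ft \;=\; \pi^{-1}\bigl( V_D(h)^{\delta-2m} \cap V_D(x)^m \bigr).
\end{equation*}

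Third, I would apply a dimension count on $M^w_\ka$. Since $V_D(h)$ has codimension $2$ and $V_D(x)$ has codimension $4$ in $M^w_\ka$, the total codimension of $V_D(h)^{\delta-2m}\cap V_D(x)^m$ in $M^w_\ka$ is $2(\delta-2m)+4m = 2\delta$. By hypothesis \eqref{eq:VanishingASDCondition3}, $2\delta > 2d_a(\ft) = \dim M^w_\ka$, so for generic choices the intersection is empty; its $\pi$-preimage in $\bL^{\asd,*}_\ft$ is therefore empty, and intersecting further with $\bar\sW^{\eta_c}$ cannot introduce new points. Condition \eqref{eq:VanishingASDCondition1} ensures the multi-index $(\delta-2m,m)$ makes sense, and condition \eqref{eq:VanishingASDCondition2} guarantees the expected dimension of the whole intersection is zero, so the signed count is well-defined; the vanishing then follows.

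The main obstacle is the treatment of the lower Uhlenbeck strata of $\bL^{\asd}_\ft$, where the clean fiber bundle picture breaks down. The standard way to handle this, as in \cite[Section 3]{FL2b}, is to show that the geometric representatives can be chosen transverse to every stratum, so that the zero-dimensional intersection meets only the top stratum: any nonempty intersection with a stratum of codimension $\ge 2$ would have strictly negative expected dimension and is therefore empty after generic perturbation. This step relies on the transversality and stratification results already established for $\sM_\ft$ in \cite{FL1,FL2a,FL2b}, and once it is invoked the dimension count above completes the proof.
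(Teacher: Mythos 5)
The paper does not supply its own proof of this proposition; it is quoted from \cite[Proposition 3.29]{FL2b}. Your reconstruction---exploiting the projection $\pi\colon\bL^{\asd}_\ft\to M^w_\ka$, the fact that the $\mu_p$ geometric representatives restrict to $\pi$-pullbacks of Donaldson representatives, and the forced emptiness downstairs once $2\delta>\dim M^w_\ka=2d_a(\ft)$, together with the codimension-two-or-more lower strata being avoided after generic perturbation---is exactly the mechanism underlying the cited result, so the approach matches; the only place worth tightening is the justification that the universal bundle restricted to the link (a neighborhood boundary, not the zero-section itself) is the $\pi$-pullback, which follows from the tubular-neighborhood retraction rather than from agreement along $M^w_\ka$ alone.
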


\subsubsection{The links of the moduli spaces of Seiberg--Witten monopoles}
For $\ell(\ft,\fs)\ge 0$,
the link $\bL_{\ft,\fs}$ of $M_\fs\times\Sym^\ell(X)\subset \bar\sM_\ft/S^1$
is defined in \cite{FL5}.  The space $\bL_{\ft,\fs}$ is compact, stratified by smooth
manifolds with corners, with lower strata of codimension at least two.
The dimension of $\bL_{\ft,\fs}$ equals that of $\bL^{\asd}_\ft$.
As described in
\cite[Section 8.1.4]{FL5}, the top stratum of $\bL_{\ft,\fs}$ is orientable with a natural choice of orientation.

\begin{hyp}[Properties of local $\SO(3)$-monopole gluing maps]
\label{hyp:Local_gluing_map_properties}
The local gluing map, constructed in \cite{FL3}, gives a continuous parametrization of a neighborhood of $M_{\fs}\times\Si$ in $\bar\sM_{\ft}$ for each smooth stratum $\Si\subset\Sym^\ell(X)$.
\end{hyp}

Hypothesis \ref{hyp:Local_gluing_map_properties} is discussed in greater detail in
\cite[Sections 7.8 \& 7.9]{FL5}.
The question of how to assemble the \emph{local} gluing maps for neighborhoods of $M_{\fs}\times \Si$ in $\bar\sM_{\ft}$, as $\Si$ ranges over all smooth strata of $\Sym^\ell(X)$, into a \emph{global} gluing map for a neighborhood of $M_{\fs}\times \Sym^\ell(X)$ in $\bar\sM_{\ft}$ is itself difficult --- involving the so-called `overlap problem' described in \cite{FLMcMaster} --- but one which we do solve in \cite{FL5}. See Remark \ref{rmk:GluingThmProperties} for a further discussion of this point.

\begin{thm}[$\SO(3)$-monopole link pairing formula]
\cite[Theorem 10.1.1]{FL5}
\label{thm:SWLinkPairing}
Let $\ft$ be a \spinu \hfill\break
structure on a standard four-manifold
$X$ of Seiberg--Witten simple type and assume Hypothesis \ref{hyp:Local_gluing_map_properties}.
Denote $\La=c_1(\ft)$ and $K=c_1(\fs)$
for $\fs\in\overline{\Red}(\ft)$.
Let $\delta,\eta_c,m\in\ZZ_{\ge 0}$ satisfy $\delta-2m\ge 0$
and
$$
\delta+\eta_c= \frac{1}{2}\dim \bL_{\ft,\fs}=d_a(\ft)+n_a(\ft)-1.
$$
Let $\ell=\ell(\ft,\fs)$  be as defined in
\eqref{eq:DefineLevelOfRed}.  Then,
for any integral lift $w\in H^2(X;\ZZ)$ of $w_2(\ft)$,
and any $h\in H_2(X;\RR)$, and
generator $x\in H_0(X;\ZZ)$,
\begin{equation}
\label{eq:MainEquation}
\begin{aligned}
{}&
\#\left(\bar\sV(h^{\delta-2m}x^m)\cap \bar\sW^{\eta_c}\cap \bL_{\ft,\fs}\right)
\\
{}&
\quad=
SW_X(\fs)
\sum_{\begin{subarray}{l}i+j+2k\\=\delta-2m\end{subarray}}
a_{i,j,k}(\chi_h,c_1^2,K\cdot\La,\La^2,m,\ell)
\langle K,h\rangle^i
\langle \La,h\rangle^j
Q_X(h)^k,
\end{aligned}
\end{equation}
where $\#$ denotes the signed count of points in the intersection
and
where for each triple of non-negative integers, $i, j, k \in \NN$,
the coefficients,
$$
a_{i,j,k}:\ZZ\times\ZZ\times\ZZ\times\ZZ\times\NN\times\NN \to \RR,
$$
are
universal
functions of the variables $\chi_h$, $c_1^2$, $c_1(\fs)\cdot\La$, $\La^2$, $m$, $\ell$
and vanish if $k>\ell(\ft,\fs)$.
\end{thm}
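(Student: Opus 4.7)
The plan is to prove Theorem \ref{thm:SWLinkPairing} by (i) using Hypothesis \ref{hyp:Local_gluing_map_properties} to assemble local gluing maps into a global description of a neighborhood of $M_\fs \times \Sym^\ell(X)$ in $\bar\sM_\ft$, (ii) reading off $\bL_{\ft,\fs}$ as the frontier of a tubular neighborhood in the $S^1$-quotient, and (iii) computing the intersection pairing by a fiber-integration/push-forward argument. The local gluing maps of \cite{FL3} present a neighborhood of $M_\fs \times \Sigma$, for each smooth stratum $\Sigma \subset \Sym^\ell(X)$, as (roughly) the zero locus of an obstruction section of a bundle over a space of gluing data --- built from $M_\fs$, from $\Sigma$, and from framed charge-one $\SO(3)$ instanton moduli spaces on $\RR^4$ associated with each point of $\Sigma$ --- modulo the residual symmetry groups at the reducible. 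The essential technical content is assembling these compatible pieces into a single stratified, $S^1$-equivariant thickened neighborhood whose boundary in $\bar\sM_\ft/S^1$ is $\bL_{\ft,\fs}$.

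With this global description in hand, I would restrict $\mu_p(h)$ and $\bar\mu_c$ to the gluing data bundle and decompose them into three independent contributions. The class $\mu_p(h)$ contributes $\thalf \langle K, h\rangle$ from the $M_\fs$ factor via the universal $\spinc$ line bundle, powers of $\langle \La, h\rangle$ and $Q_X(h)$ from the tautological classes on $\Sym^\ell(X)$, and powers of $\langle \La, h\rangle$ from the $\U(1)$ bundle attached to each framed instanton factor; the class $\bar\mu_c$ contributes the equivariant Euler class of the normal bundle to the reducible locus. Under the Seiberg-Witten simple type hypothesis $K^2 = c_1^2(X)$, the expected dimension of $M_\fs$ is zero and integration over $M_\fs$ collapses to the signed count $\SW_X(\fs)$. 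The remaining integration over the compact framed instanton moduli spaces and $\Sym^\ell(X)$, combined with the Euler class of the obstruction bundle (computable via an equivariant index theorem on $\RR^4$ that sees only the characteristic data of $\ft$ and $\fs$), yields intersection numbers that are universal polynomials in $\chi_h, c_1^2, K\cdot\La, \La^2, m, \ell$ --- producing the coefficients $a_{i,j,k}$ with exactly the asserted dependence.

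The vanishing $a_{i,j,k} = 0$ for $k > \ell$ is then a dimension count on the symmetric product: each factor of $Q_X(h)$ arises from self-intersection of geometric representatives of $\mu_p(h)$ along the $\Sym^\ell(X)$ factor, whose total cohomological degree is bounded by $2\ell$, so more than $\ell$ such factors annihilate the top class. The hard part is securing the global gluing structure --- the overlap problem of \cite{FLMcMaster} --- namely reconciling the local gluing maps of \cite{FL3} across overlapping strata of $\Sym^\ell(X)$ in a way that is $S^1$-equivariant, continuous up to the Uhlenbeck boundary, and compatible with the universal bundles carrying the $\mu$- and $\bar\mu_c$-classes. Once this global structure is in place, orientations are fixed as in \cite[Section 7.1.3]{FL5} and the pairing computation reduces to a (lengthy but essentially local) calculation of equivariant characteristic classes on finite-dimensional parameter spaces whose answer depends only on the numerical invariants listed in the theorem.
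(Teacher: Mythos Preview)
The paper does not prove Theorem \ref{thm:SWLinkPairing}; it is quoted from \cite[Theorem 9.0.5]{FL5} and used as a black box. The surrounding remarks (Remarks \ref{rmk:AdditionalParameterInCoeff} and \ref{rmk:GluingThmProperties}) only comment on the hypotheses and on how this version differs from the one in \cite{FL6}. So there is no ``paper's own proof'' to compare against here.

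That said, your sketch is broadly consistent with the machinery the paper does expose in Section \ref{sec:LeadingOrderComp}, where a single coefficient $a_{A,0,\ell}$ is computed explicitly. The ingredients you list --- the virtual neighborhood $\bar\sM^{\vir}_{\ft,\fs}$ with its projection $\pi_X$ to $\Sym^\ell(X)$, the decomposition \eqref{eq:CohomClasses} of $\barmu_p(h)$, the obstruction Euler classes $\bar e_I$ and $\bar e_s$, and the reduction of the pairing to \eqref{eq:LinkPairingAsCohomology} --- are exactly what the paper invokes from \cite{FL5}. Your explanation of the vanishing for $k>\ell$ as a degree bound on $\Sym^\ell(X)$ matches the role of \eqref{eq:SymmClassPairing}. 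One small inaccuracy: in \eqref{eq:CohomClasses} the coefficient of $\nu$ in $\barmu_p(h)$ is $\tfrac12\langle \La - K, h\rangle$, not $\tfrac12\langle K,h\rangle$; the $\langle\La,h\rangle$ and $\langle K,h\rangle$ contributions both enter through this $\nu$-term rather than through separate ``$M_\fs$'' and ``framed instanton'' factors as you describe. But the architecture of your outline --- global gluing via Hypothesis \ref{hyp:Local_gluing_map_properties}, then pushforward along the link with the $\mu$-classes decomposed as in \eqref{eq:CohomClasses} --- is the right shape for the argument carried out in \cite{FL5}.
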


\begin{rmk}
\label{rmk:AdditionalParameterInCoeff}
In contrast to the version of this theorem presented in \cite{FL6},
the coefficients $a_{i,j,k}$  \eqref{eq:MainEquation} depend on the additional argument
$\ell$ because we do not assume that $\delta=\frac{1}{2}\dim M^w_\ka$ in \eqref{eq:MainEquation} as we do
in \cite{FL6}.
\end{rmk}

\begin{rmk}
\label{rmk:GluingThmProperties}
The proof in \cite{FL5} of Theorem \ref{thm:SWLinkPairing} assumes the Hypothesis \ref{hyp:Local_gluing_map_properties} (see
\cite[Sections 7.8 \& 7.9]{FL5}) that the local gluing map for a neighborhood of $M_{\fs}\times \Si$ in $\bar\sM_{\ft}$ gives a continuous parametrization of a neighborhood of $M_{\fs}\times\Si$ in $\bar\sM_{\ft}$ for each smooth stratum $\Si\subset\Sym^\ell(X)$. These local gluing maps are the analogues for $\SO(3)$ monopoles of the local gluing maps for anti-self-dual $\SO(3)$ connections constructed by Taubes in \cite{TauSelfDual, TauIndef, TauFrame} and Donaldson and Kronheimer in \cite[\S 7.2]{DK}; see also \cite{MorganMrowkaTube, MrowkaThesis}. We have established the existence of local gluing maps in \cite{FL3} and expect that a proof of the continuity for the local gluing maps with respect to Uhlenbeck limits should be similar to our proof in \cite{FLKM1} of this property for the local gluing maps for anti-self-dual $\SO(3)$ connections. The remaining properties of local gluing maps assumed in \cite{FL5} are that they are injective and also surjective in the sense that elements of $\bar\sM_{\ft}$ sufficiently close (in the Uhlenbeck topology) to $M_{\fs}\times\Si$ are in the image of at least one of the  local gluing maps. In special cases, proofs of these properties for the local gluing maps for anti-self-dual $\SO(3)$ connections (namely, continuity with respect to Uhlenbeck limits, injectivity, and surjectivity) have been given in \cite[\S 7.2.5, 7.2.6]{DK}, \cite{TauSelfDual, TauIndef, TauFrame}. The authors are currently developing a proof of the required properties for the local gluing maps for $\SO(3)$ monopoles \cite{Feehan_Leness_monopolegluingbook}.
Our proof will also yield the analogous properties for the local gluing maps for anti-self-dual $\SO(3)$ connections, as required to complete the proof of the Kotschick--Morgan Conjecture \cite{KotschickMorgan}, based on our work in \cite{FL5}.
\end{rmk}

\subsubsection{The cobordism formula}
The compactification $\bar\sM^{*,0}_\ft/S^1$ defines a compact
cobordism,
stratified by smooth oriented manifolds,   between
$$
\bL^{\asd}_\ft
\quad\text{and}\quad
\bigcup_{\fs\in\overline{\Red}(\ft)} \bL_{\ft,\fs}.
$$
For $\delta+\eta_c=\frac{1}{2}\dim\bL^{\asd}_\ft$, this cobordism gives the
following equality
\cite[Equation (2.6.1)]{FL5},
\begin{equation}
\label{eq:CobordismFormulaIntForm}
\begin{aligned}
{}&\#\left(\bar\sV(h^{\delta-2m}x^m)\cap \bar\sW^{\eta_c}\cap \bL^{\asd}_\ft\right)
\\
{}&\quad =
-\sum_{\fs\in\overline{\Red}(\ft)}
(-1)^{\frac{1}{2}(w^2-\si)+\frac{1}{2}(w^2+(w-c_1(\ft))\cdot c_1(\fs))}
\#\left(\bar\sV(h^{\delta-2m}x^m)\cap \bar\sW^{\eta_c}\cap \bL_{\ft,\fs}\right).
\end{aligned}
\end{equation}
We note that the power of $-1$ in \eqref{eq:CobordismFormulaIntForm} is computed by comparing
the different orientations of the links
as described in
\cite[Lemma 8.1.8]{FL5}.

\section{The cobordism with $c_1(\ft)=0$}
\label{sec:CobordWithLa=0}
In this section, we will derive a formula (see \eqref{eq:VanishingCobordismReducedForm})
relating the Seiberg--Witten polynomials $\SW^{w,i}_X$ defined in \eqref{eq:SWPolynomial}
and the intersection form of $X$.
We do so by  applying the cobordism formula \eqref{eq:CobordismFormulaIntForm} in a case where
Proposition \ref{prop:VanishingASDPairing} implies that the left-hand-side
of \eqref{eq:CobordismFormulaIntForm} vanishes.
To extract a formula
from the resulting vanishing sum that includes the Seiberg--Witten
polynomials, $\SW^{w,i}_X$, we apply Theorem \ref{thm:SWLinkPairing} to
the terms on the right-hand-side of \eqref{eq:CobordismFormulaIntForm}.
In the resulting sum over
$\overline{\Red}(\ft)$, the coefficients, $a_{i,j,k}$, appearing
in equation \eqref{eq:MainEquation} in Theorem \ref{thm:SWLinkPairing} depend on $c_1(\ft)\cdot c_1(\fs)$.
This dependence prevents the
desired extraction of $\SW^{w,i}_X$ (see Remark \ref{rmk:VaryingCoeff}) from the cobordism sum.
To ensure that $c_1(\ft)\cdot c_1(\fs)$ is constant as $c_1(\fs)$ varies
in $B(X)$ without further assumptions on $B(X)$, such as the abundance
condition mentioned in our Introduction, we assume $c_1(\ft)=0$.


We begin by establishing the existence of a family of \spinu structures
with $c_1(\ft)=0$.

\begin{lem}
\label{lem:CharClassOfSpinuWithLa0}
Let $X$ be a standard four-manifold.
For every $n\in\NN$ there is a
\spinu structure $\ft_n$ on $X$ satisfying
\begin{equation}
\label{eq:CharClassOfSpinuWithLa0}
c_1(\ft_n)=0, \quad
p_1(\ft_n)=4n+c_1^2(X)-8\chi_h(X),\quad
w_2(\ft_n)=w_2(X),
\end{equation}
and such that $n_a(\ft_n)=n$, where $n_a(\ft)$ is the index defined in \eqref{eq:DefineIndex2}.
\end{lem}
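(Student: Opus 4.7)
The plan is to reduce the statement to Lemma~\ref{lem:ExistenceOfSpinu} applied with the data $\La=0$, $\mathfrak{w}=w_2(X)$, and $\wp:=4n+c_1^2(X)-8\chi_h(X)$. If that lemma produces a spin-u structure $\ft_n$ with these characteristic classes, then the identity $n_a(\ft_n)=n$ is immediate: substituting $c_1(\ft_n)=0$ and $p_1(\ft_n)=\wp$ into the definition \eqref{eq:DefineIndex2} makes the $\pm c_1^2(X)$ and $\pm 8\chi_h(X)$ terms cancel, leaving $n_a(\ft_n)=(4n)/4=n$. So the real work is verifying the three numerical hypotheses of Lemma~\ref{lem:ExistenceOfSpinu} for this choice of data.

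Conditions (1) and (2) are essentially automatic. For (1), every closed oriented smooth four-manifold admits a spin-c structure, so $\mathfrak{w}=w_2(X)$ lifts to an integral class $w\in H^2(X;\ZZ)$. For (2), the required congruence $\La\equiv\mathfrak{w}+w_2(X)\pmod 2$ becomes $0\equiv 2w_2(X)\pmod 2$, which is trivial.

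The one step that needs thought is condition (3), namely $\wp\equiv\mathfrak{w}^2\pmod 4$. My plan is to fix an integral lift $c\in H^2(X;\ZZ)$ of $w_2(X)$, and to use that $\mathfrak{w}^2\equiv c\cdot c\pmod 4$ (the right-hand side being well-defined mod $4$, since replacing $c$ by $c+2b$ alters $c\cdot c$ by a multiple of $4$). Because $c$ is characteristic, the standard Wu/Rokhlin-type congruence gives $c\cdot c\equiv\si(X)\pmod 8$. On the other side, a short arithmetic identity yields $c_1^2(X)-\si(X)=2e(X)+2\si(X)=8\chi_h(X)$, so $\wp\equiv c_1^2(X)\equiv\si(X)\equiv\mathfrak{w}^2\pmod 4$, as required. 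The main point to be careful about, and the only place where the "standard" hypothesis is actually used, is the implicit assertion that $\chi_h(X)\in\ZZ$ in the definition of $\wp$: this holds because $b^+(X)$ is odd, so that $e(X)+\si(X)=2+2b^+(X)$ is divisible by $4$. Once this integrality is noted, no further obstacles are expected and the lemma follows by a single invocation of Lemma~\ref{lem:ExistenceOfSpinu}.
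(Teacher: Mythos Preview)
Your proof is correct and follows essentially the same approach as the paper: reduce to Lemma~\ref{lem:ExistenceOfSpinu} with $\La=0$, $\mathfrak{w}=w_2(X)$, $\wp=4n+c_1^2(X)-8\chi_h(X)$, verify its three hypotheses (the key one being $\wp\equiv w_2(X)^2\pmod 4$ via $c_1^2(X)-8\chi_h(X)=\si(X)$ and the standard congruence $\si(X)\equiv w_2(X)^2\pmod 4$), and read off $n_a(\ft_n)=n$ from \eqref{eq:DefineIndex2}. One small remark: your care about $\chi_h(X)\in\ZZ$ is not strictly needed for the argument, since $8\chi_h(X)=2(e(X)+\si(X))$ is automatically an integer divisible by $4$ once $b^1(X)=0$.
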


\begin{proof}
By
\cite[Exercise 1.2.23]{GompfStipsicz}, $w_2(X)$ admits an integral lift.
Therefore, the existence of the \spinu structure $\ft_n$
with the characteristic classes in \eqref{eq:CharClassOfSpinuWithLa0}
follows from Lemma \ref{lem:ExistenceOfSpinu}
and the observation that for $c_1(\ft_n)=0$ and $w_2(\ft_n)=w_2(X)$ we have
$w_2(\ft_n)^2\equiv \si(X)\equiv c_1^2(X)-8\chi_h(X)\pmod 4$, so the desired value of $p_1(\ft_n)$
can be achieved for any $n \in\ZZ$ with $n\ge 0$.
The equality $n_a(\ft_n)=n$ follows from \eqref{eq:DefineIndex2} and by substituting
the value of $p_1(\ft_n)$ in \eqref{eq:CharClassOfSpinuWithLa0}.
\end{proof}

To apply Theorem \ref{thm:SWLinkPairing} to the cobordism formula \eqref{eq:CobordismFormulaIntForm}
for a \spinu structure $\ft_n$ satisfying \eqref{eq:CharClassOfSpinuWithLa0},
we compute the level in $\bar\sM_{\ft_n}$ of a \spinc structure $\fs$.

\begin{lem}
\label{lem:LevelOfSWinLa0}
Let $X$ be a standard four-manifold of Seiberg--Witten simple type.
For a non-negative integer $n$, let $\ft_n$ be a
\spinu structure on $X$ satisfying \eqref{eq:CharClassOfSpinuWithLa0}.
For $c_1(\fs)\in B(X)$,
the level $\ell=\ell(\ft_n,\fs)$ in $\bar\sM_{\ft_n}$ of $\fs$
is
\begin{equation}
\label{eq:LevelOfSWInLa=0}
\ell(\ft_n,\fs)
=
2\chi_h(X)-n.
\end{equation}
\end{lem}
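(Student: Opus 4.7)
The plan is to substitute the characteristic class data of $\ft_n$ directly into the definition \eqref{eq:DefineLevelOfRed} of the level and use the Seiberg-Witten simple type hypothesis to eliminate the remaining dependence on $\fs$.

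First I would recall from \eqref{eq:DefineLevelOfRed} that
$$
\ell(\ft_n,\fs)
=
\tfrac{1}{4}\bigl(\,(c_1(\fs)-c_1(\ft_n))^2 - p_1(\ft_n)\bigr).
$$
Since $\ft_n$ satisfies \eqref{eq:CharClassOfSpinuWithLa0}, we have $c_1(\ft_n)=0$, so $(c_1(\fs)-c_1(\ft_n))^2 = c_1(\fs)^2$. Because $c_1(\fs)\in B(X)$ and $X$ has Seiberg-Witten simple type, $c_1(\fs)^2 = c_1^2(X)$. Plugging in the value $p_1(\ft_n)=4n+c_1^2(X)-8\chi_h(X)$ from \eqref{eq:CharClassOfSpinuWithLa0} then gives
$$
\ell(\ft_n,\fs)
=
\tfrac{1}{4}\bigl(c_1^2(X) - 4n - c_1^2(X) + 8\chi_h(X)\bigr)
=
2\chi_h(X) - n,
$$
as claimed.

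There is no real obstacle here: the lemma is a one-line substitution, and the two ingredients it requires (the vanishing of $c_1(\ft_n)$ and the simple type identity $c_1(\fs)^2=c_1^2(X)$) are exactly the hypotheses. The only thing worth double-checking is that the formula is well defined, i.e.\ that $\fs\in\overline{\Red}(\ft_n)$ so that the level is non-negative; this amounts to verifying $2\chi_h(X)\ge n$, but that constraint does not enter the statement of the lemma itself and will be imposed separately when the lemma is used to restrict the relevant range of $n$ in the cobordism argument.
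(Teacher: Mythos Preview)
Your proof is correct and follows essentially the same route as the paper: both substitute $c_1(\ft_n)=0$ and $p_1(\ft_n)=4n+c_1^2(X)-8\chi_h(X)$ from \eqref{eq:CharClassOfSpinuWithLa0} into the level formula \eqref{eq:DefineLevelOfRed}, then invoke the simple-type identity $c_1(\fs)^2=c_1^2(X)$ to finish. Your closing remark about non-negativity of the level is a helpful observation but, as you note, is not part of this lemma.
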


\begin{proof}
By the definition of Seiberg--Witten simple type,
for any $c_1(\fs)\in B(X)$ we have
\begin{equation}
\label{eq:SWST}
c_1(\fs)^2= c_1^2(X).
\end{equation}
By \eqref{eq:DefineLevelOfRed}, the level is given by
\begin{align*}
\ell(\ft_n,\fs)
{}&=
\frac{1}{4}
\left(
(c_1(\fs)-c_1(\ft_n))^2 - p_1(\ft_n)
\right)
\\
{}&=
\frac{1}{4}
\left(
c_1(\fs)^2 - 4n -c_1^2(X)+8\chi_h(X)
\right)
\quad\text{(by  \eqref{eq:CharClassOfSpinuWithLa0})}
\\
{}&=
2\chi_h(X)-n
\quad\text{(by \eqref{eq:SWST})},
\end{align*}
as desired.
\end{proof}

Combining \eqref{eq:CobordismFormulaIntForm} with Proposition \ref{prop:VanishingASDPairing}
and Theorem \ref{thm:SWLinkPairing} then gives the following

\begin{thm}
\label{thm:VanishingCobordism}
Let $X$ be a standard four-manifold of Seiberg--Witten simple type.
Assume that $m,n\in\NN$ satisfy
\begin{subequations}
\label{eq:VanishingSumAssump}
\begin{align}
\label{eq:VanishingSumAssump1}
n&\le 2\chi_h(X),
\\
\label{eq:VanishingSumAssump2}
1 &< n,
\\
\label{eq:VanishingSumAssump3}
0&\le c(X)-n -2m-1.
\end{align}
\end{subequations}
We abbreviate the coefficients  in equation \eqref{eq:MainEquation} in Theorem \ref{thm:SWLinkPairing} by
\begin{equation}
\label{eq:CoeffAbbreviation}
a_{i,0,k}:=a_{i,0,k}(\chi_h(X),c_1^2(X),0,0,m,2\chi_h(X)-n).
\end{equation}
Then, for $A=c(X)-n-2m-1$ and $w\in H^2(X;\ZZ)$ characteristic,
\begin{equation}
\label{eq:VanishingCobordismReducedForm}
0=
\sum_{k=0}^{2\chi_h(X)-n}
a_{A+2k,0,2\chi_h(X)-n-k}
\SW_X^{w,A+2k}(h)
Q_X(h)^{2\chi_h(X)-n-k}.
\end{equation}
\end{thm}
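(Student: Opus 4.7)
The plan is to apply the $\SO(3)$-monopole cobordism formula \eqref{eq:CobordismFormulaIntForm} to the \spinu structure $\ft_n$ supplied by Lemma \ref{lem:CharClassOfSpinuWithLa0}, choosing $\delta$ and $\eta_c$ so that Proposition \ref{prop:VanishingASDPairing} forces the anti-self-dual link pairing on the left-hand side to vanish, and then expanding the surviving Seiberg-Witten link pairings on the right via Theorem \ref{thm:SWLinkPairing}.

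First, I would take $\eta_c = 0$ and $\delta = d_a(\ft_n) + n_a(\ft_n) - 1$. Using \eqref{eq:DefineIndex1}, \eqref{eq:DefineIndex2}, and \eqref{eq:CharClassOfSpinuWithLa0} one computes $d_a(\ft_n) = 5\chi_h(X) - c_1^2(X) - 4n$ and $n_a(\ft_n) = n$, so $\delta = 5\chi_h(X) - c_1^2(X) - 3n - 1$. Then \eqref{eq:VanishingASDCondition2} holds by construction; \eqref{eq:VanishingASDCondition3} follows from $\delta - d_a(\ft_n) = n - 1 > 0$ by \eqref{eq:VanishingSumAssump2}; and \eqref{eq:VanishingASDCondition1} follows from $\delta - 2m = A + 4\chi_h(X) - 2n \ge 0$, using \eqref{eq:VanishingSumAssump1} and \eqref{eq:VanishingSumAssump3}, where $A = c(X) - n - 2m - 1$. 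If $d_a(\ft_n) \ge 0$, Proposition \ref{prop:VanishingASDPairing} applies directly; otherwise $M^w_\ka$ and hence $\bL^{\asd}_{\ft_n}$ are empty and the ASD pairing vanishes trivially. Either way the left-hand side of \eqref{eq:CobordismFormulaIntForm} is zero, and I fix any integral lift $w \in H^2(X;\ZZ)$ of $w_2(\ft_n) = w_2(X)$.

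For the right-hand side, the Seiberg-Witten simple type assumption together with Lemma \ref{lem:LevelOfSWinLa0} gives $\ell(\ft_n,\fs) = 2\chi_h(X) - n$ for every $\fs$ with $c_1(\fs) \in B(X)$, and this is nonnegative by \eqref{eq:VanishingSumAssump1}; in particular every basic class contributes. For each such $\fs$, I apply Theorem \ref{thm:SWLinkPairing}. Because $\La = c_1(\ft_n) = 0$, only the $j = 0$ terms in \eqref{eq:MainEquation} survive, and the coefficient arguments $K \cdot \La$ and $\La^2$ both vanish, yielding exactly the abbreviated coefficients of \eqref{eq:CoeffAbbreviation}. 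The constraint $i + 2k = \delta - 2m$ with $k \in \{0,\ldots,2\chi_h(X) - n\}$ becomes $i = A + 2k$ after the reindexing $k \mapsto (2\chi_h(X) - n) - k$.

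Finally, I would interchange the sums over $\fs$ and over $k$ and recognize the inner $\fs$-sum as a Seiberg-Witten polynomial. With $c_1(\ft_n) = 0$, the sign in \eqref{eq:CobordismFormulaIntForm} factors as an $\fs$-independent $(-1)^{(w^2 - \sigma(X))/2}$ times $(-1)^{(w^2 + w \cdot c_1(\fs))/2}$, so the inner sum reproduces $\SW_X^{w,A+2k}(h)$ by \eqref{eq:ReduceSWPolyToK}. Canceling the overall sign yields \eqref{eq:VanishingCobordismReducedForm}. The proof is essentially an assembly of Lemmas \ref{lem:CharClassOfSpinuWithLa0} and \ref{lem:LevelOfSWinLa0}, Proposition \ref{prop:VanishingASDPairing}, and Theorem \ref{thm:SWLinkPairing}, requiring no new geometric input; the only point needing care is the bookkeeping of indices, signs, and dimensions, especially verifying that the reindexed coefficients precisely match \eqref{eq:CoeffAbbreviation}.
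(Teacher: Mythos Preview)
Your proposal is correct and follows essentially the same route as the paper: choose $\ft_n$ from Lemma \ref{lem:CharClassOfSpinuWithLa0}, set $\eta_c=0$ and $\delta=d_a(\ft_n)+n_a(\ft_n)-1$, verify the hypotheses of Proposition \ref{prop:VanishingASDPairing} from \eqref{eq:VanishingSumAssump}, then expand the surviving Seiberg--Witten link pairings via Theorem \ref{thm:SWLinkPairing} with $\La=0$ and reindex. Two small points: you handle the case $d_a(\ft_n)<0$ (empty ASD moduli space) more explicitly than the paper does, which is fine; and where you cancel the $\fs$-independent sign $(-1)^{(w^2-\sigma)/2}$ as a nonzero constant, the paper instead observes it equals $1$ because $w$ is characteristic---either argument works since the equation is homogeneous.
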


\begin{proof}
Let $\ft_n$ be a \spinu structure
on $X$ satisfying \eqref{eq:CharClassOfSpinuWithLa0}, where $n$ is
the non-negative integer in the statement of Theorem \ref{thm:VanishingCobordism}.
The value of $p_1(\ft_n)$ in \eqref{eq:CharClassOfSpinuWithLa0}
and the expression for $d_a(\ft_n)$ given in \eqref{eq:DefineIndex1} and $c(X)$ in \eqref{eq:CharNumbers}
imply that
\begin{equation}
\label{eq:ASDDimenForLa=0}
\frac{1}{2}\dim M^w_\ka = d_a(\ft_n)=c(X)+4\chi_h(X)-4n.
\end{equation}
The value of $d_a(\ft_n)$ in \eqref{eq:ASDDimenForLa=0},
the equality $n_a(\ft_n)=n$ given in Lemma \ref{lem:CharClassOfSpinuWithLa0},
and the
formula for half the dimension of $\bL^{\asd}_{\ft_n}$ given in \eqref{eq:ASDLinkDim}
imply that
\begin{equation}
\label{eq:ASDLinkDimenForLa=0}
\frac{1}{2}\dim\bL^{\asd}_{\ft_n} = c(X)+4\chi_h(X)-3n-1.
\end{equation}
We apply the cobordism formula \eqref{eq:CobordismFormulaIntForm} to the \spinu structure
$\ft_n$ with
\begin{equation}
\label{eq:vanishing_cobordism_theorem_proof_delta_choice}
\delta:=\frac{1}{2}\dim \bL^{\asd}_{\ft_n}=c(X)+4\chi_h(X)-3n-1
\quad\text{and}\quad
\eta_c:=0,
\end{equation}
and
claim that Proposition \ref{prop:VanishingASDPairing} implies that the
left-hand-side of \eqref{eq:CobordismFormulaIntForm} vanishes.
The assumption \eqref{eq:VanishingSumAssump1} and the identity \eqref{eq:LevelOfSWInLa=0} imply that for $c_1(\fs)\in B(X)$,
\begin{equation}
\label{eq:La=0,ellge0}
2\ell(\ft_n,\fs)=4\chi_h(X)-2n \ge 0.
\end{equation}
Assumption \eqref{eq:VanishingSumAssump3},
the definition of $\delta$, and
\eqref{eq:La=0,ellge0} imply that
\begin{equation}
\label{eq:delta,ell,A}
\delta -2m\ge \delta-2\ell-2m= c(X)-n-1-2m \ge 0.
\end{equation}
Thus, $\delta-2m\ge 0$ and so condition \eqref{eq:VanishingASDCondition1}
of Proposition \ref{prop:VanishingASDPairing} holds.

The choice of $\delta$ and $\eta_c$ imply that $\delta+\eta_c=\frac{1}{2}\dim \bL^{\asd}_{\ft_n}$,
so condition \eqref{eq:VanishingASDCondition2}
of Proposition \ref{prop:VanishingASDPairing} holds.

Assumption \eqref{eq:VanishingSumAssump2} implies that
$-1>-n$, so $-3n-1>-4n$.  This inequality, our choice of $\delta$,
and \eqref{eq:ASDDimenForLa=0}
imply that
$$
\delta=c(X)+4\chi_h(X)-3n-1>c(X)+4\chi_h(X)-4n=
\frac{1}{2}\dim M^w_\ka,
$$
so condition \eqref{eq:VanishingASDCondition3}
of Proposition \ref{prop:VanishingASDPairing} holds.
Thus, all three conditions of Proposition \ref{prop:VanishingASDPairing}
hold and the
left-hand-side of \eqref{eq:CobordismFormulaIntForm} vanishes when applied with
the given values of $\delta$, $\eta_c$, and the \spinu structure $\ft_n$.
Under these conditions, equation \eqref{eq:CobordismFormulaIntForm} becomes
\begin{equation}
\label{eq:CobordismFormulaIntForm1}
0
=
-\sum_{\fs\in\overline{\Red}(\ft_n)}
(-1)^{\frac{1}{2}(w^2-\si)+\frac{1}{2}(w^2+w\cdot c_1(\fs))}
\#\left(\bar\sV(h^{\delta-2m}x^m)\cap \bL_{\ft_n,\fs}\right).
\end{equation}
For each $\fs\in\overline{\Red}(\ft_n)$,
equation \eqref{eq:MainEquation} in Theorem \ref{thm:SWLinkPairing}
implies that each term  in the sum on the right-hand-side of
\eqref{eq:CobordismFormulaIntForm1}
contains a factor of $\SW_X(\fs)$.
The terms in this sum given by
$\fs\in\overline{\Red}(\ft_n)$ with $c_1(\fs)\notin B(X)$ then vanish.
Hence, the sum in \eqref{eq:CobordismFormulaIntForm1} over
$\overline{\Red}(\ft_n)$
can be written as a double sum, over $K\in B(X)$ and then over $\fs\in c_1^{-1}(K)$
\begin{equation}
\label{eq:CobordismFormulaIntForm2}
\begin{aligned}
0{}&=
\sum_{\fs\in\overline{\Red}(\ft_n)}
(-1)^{\frac{1}{2}(w^2-\si)+\frac{1}{2}(w^2+w\cdot c_1(\fs))}
\#\left(\bar\sV(h^{\delta-2m}x^m)\cap \bL_{\ft_n,\fs}\right)
\\
{}&=
\sum_{\fs\in c_1^{-1}(K)}
\sum_{K\in B(X)}
(-1)^{\frac{1}{2}(w^2-\si)+\frac{1}{2}(w^2+w\cdot c_1(\fs))}
\#\left(\bar\sV(h^{\delta-2m}x^m)\cap \bL_{\ft_n,\fs}\right).
\end{aligned}
\end{equation}
Because we have assumed that $w\in H^2(X;\ZZ)$ is characteristic, we have
$w^2\equiv \si(X)\pmod 8$ by \cite[Lemma 1.2.20]{GompfStipsicz}, so
$$
(-1)^{\frac{1}{2}(w^2-\si(X))}=1.
$$
Our assumption that $\La=c_1(\ft_n)=0$
from \eqref{eq:CharClassOfSpinuWithLa0}
implies that all the terms in equation \eqref{eq:MainEquation}
with a factor of $\langle \La,h\rangle^j$ with $j>0$ vanish.
Thus, applying equation \eqref{eq:MainEquation} in
Theorem \ref{thm:SWLinkPairing}
to the terms in \eqref{eq:CobordismFormulaIntForm2}
and noting that $\ell = 2\chi_h(X)-n$ by \eqref{eq:LevelOfSWInLa=0}
 yields
\begin{equation}
\label{eq:CobordismFormulaIntForm3}
\begin{aligned}
0{}&=
\sum_{\fs\in c_1^{-1}(K)}\sum_{K\in B(X)}
(-1)^{\frac{1}{2}(w^2+w\cdot c_1(\fs))}
\#\left(\bar\sV(h^{\delta-2m}x^m)\cap \bL_{\ft_n,\fs}\right)
\\
{}&=
\sum_{\fs\in c_1^{-1}(K)}
\sum_{K\in B(X)}
(-1)^{\frac{1}{2}(w^2+w\cdot K)}
SW_X(\fs)
\\
{}&\quad\times
\sum_{\begin{subarray}{l}i+2k\\=\delta-2m\end{subarray}}
a_{i,0,k}(\chi_h(X),c_1^2(X),0,0,m,2\chi_h(X)-n)
\langle K,h\rangle^i
Q_X(h)^k.
\end{aligned}
\end{equation}
By the definition of $\SW_X'(K)$ in \eqref{eq:DefineCohomSW}, we can
rewrite \eqref{eq:CobordismFormulaIntForm3} as
\begin{equation}
\label{eq:CobordismFormulaIntForm4}
\begin{aligned}
0{}&=
\sum_{K\in B(X)}
(-1)^{\frac{1}{2}(w^2+w\cdot K)}
SW_X'(K)
\\
{}&\quad\times
\sum_{\begin{subarray}{l}i+2k\\=\delta-2m\end{subarray}}
a_{i,0,k}(\chi_h(X),c_1^2(X),0,0,m,2\chi_h(X)-n)
\langle K,h\rangle^i
Q_X(h)^k.
\end{aligned}
\end{equation}
Because the coefficient $a_{i,0,k}(\chi_h(X),c_1^2(X),0,0,m,2\chi_h(X)-n)$
does not depend on $K\in B(X)$,
we can use the abbreviation $a_{i,0,k}$ in \eqref{eq:CoeffAbbreviation}
to rewrite \eqref{eq:CobordismFormulaIntForm4} as
\begin{equation}
\label{eq:CobordismFormulaIntForm5}
\begin{aligned}
0{}&=
\sum_{\begin{subarray}{l}i+2k\\=\delta-2m\end{subarray}}
a_{i,0,k} Q_X(h)^k
\sum_{K\in B(X)}
(-1)^{\frac{1}{2}(w^2+w\cdot K)}
SW_X'(K)\langle K,h\rangle^i
\\
{}&=
\sum_{\begin{subarray}{l}i+2k\\=\delta-2m\end{subarray}}
a_{i,0,k}
\SW_X^{w,i}(h)
Q_X(h)^k
\quad\text{(by \eqref{eq:ReduceSWPolyToK}).}
\end{aligned}
\end{equation}
Because the coefficients $a_{i,0,k}$ in
\eqref{eq:CobordismFormulaIntForm5} vanish for $k>\ell=2\chi_h(X)-n$
by Theorem \ref{thm:SWLinkPairing},
we can rewrite \eqref{eq:CobordismFormulaIntForm5} as
\begin{equation}
\label{eq:CobordismFormulaIntForm6}
0=
\sum_{k=0}^{\ell}
a_{\delta-2m-2\ell+2k,0,\ell-k}
\SW_X^{w,\delta-2m-2\ell+2k}(h)
Q_X(h)^{\ell-k}.
\end{equation}
From \eqref{eq:delta,ell,A} and the definition  $A=c(X)-n-2m-1$
in the statement of the theorem, we have
$\delta-2m-2\ell+2k=A+2k$.  Substituting that
equality and $\ell=2\chi_h(X)-n$ into \eqref{eq:CobordismFormulaIntForm6}
completes the proof.
\end{proof}

\begin{rmk}
\label{rmk:VaryingCoeff}
As discussed in the beginning of this section, we work with
a \spinu structure $\ft$ with $c_1(\ft)=0$ in order to ensure that
the coefficients $a_{i,j,k}$ appearing in \eqref{eq:MainEquation}
do not depend on $K\in B(X)$.
Thus, after reversing the order of summation in
\eqref{eq:CobordismFormulaIntForm4} we can pull these coefficients
out in front of the inner sum over $K\in B(X)$ to
get  the
expression \eqref{eq:CobordismFormulaIntForm5}
involving the Seiberg--Witten polynomials.
Hence, the choice of \spinu structure with $c_1(\ft)=0$
is a necessary step in the argument.
\end{rmk}

\section{The leading term computation}
\label{sec:LeadingOrderComp}
To show that
equation \eqref{eq:VanishingCobordismReducedForm} is non-trivial, we
now demonstrate, in a computation similar to the proof of  \cite[Theorem 6.1.1]{KotschickMorgan},
that the coefficient of the term in \eqref{eq:VanishingCobordismReducedForm}
including the highest power of $Q_X$ is
non-zero.

\begin{prop}
\label{prop:LeadingTerm}
Continue the notation and assumptions of Theorem \ref{thm:VanishingCobordism}.
In addition, assume that there is a class $K\in B(X)$ with $K\neq 0$.
Let $m$ and $n$ be non-negative integers satisfying the conditions
\eqref{eq:VanishingSumAssump}.  Define
$A:=c(X)-n-2m-1$, and $\delta:=c(X)+4\chi_h(X)-3n-1$, and $\ell=2\chi_h(X)-n$.
Then
\begin{equation}
\label{eq:HighestPowerOfIntersectionForm}
a_{A,0,\ell}(\chi_h(X),c_1^2(X),0, 0,m,\ell)
=
(-1)^{m+\ell}
2^{\ell-\delta}
\displaystyle{\frac{(\delta-2m)!}{\ell! A!}} .
\end{equation}
\end{prop}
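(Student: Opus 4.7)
The plan is to adapt the leading-term computation of \cite[Theorem 6.1.1]{KotschickMorgan} to the topological description of the Seiberg-Witten link $\bL_{\ft_n,\fs}$ given in \cite[Section 7]{FL5}, and to isolate the coefficient of $\langle K,h\rangle^{A} Q_X(h)^{\ell}$ in the pairing \eqref{eq:MainEquation}. Because $X$ has Seiberg-Witten simple type and $c_1(\ft_n)=0$, the index formulas \eqref{eq:DefineIndex} give $\dim M_{\fs}=0$, so $M_{\fs}$ is a single oriented point with sign $\SW_X(\fs)$, and the top stratum of $\bL_{\ft_n,\fs}$ is (modulo codimension-two lower strata) the total space of an $S^1$-bundle over a space built from $\Sym^{\ell}(X)$ together with an obstruction bundle whose rank and Chern classes are explicitly described in \cite[Section 7]{FL5}.

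First I would recall from \cite[Section 7]{FL5} the restrictions of the $\mu_p$ and $\bar\mu_c$ classes to $\bL_{\ft_n,\fs}$. Under this description, $\mu_p(h)\bigr|_{\bL_{\ft_n,\fs}}$ decomposes as a scalar multiple of $\langle K,h\rangle$ (the Seiberg-Witten contribution at the point $M_{\fs}$) plus a slant-product class on $\Sym^{\ell}(X)$ built from $h\in H_2(X;\RR)$; similarly $\mu_p(x)$ and $\bar\mu_c$ restrict to specific classes involving the Chern roots of the obstruction bundle and the $S^1$-bundle. Second, I would expand
\[
\mu_p(h)^{\delta-2m}\smile \mu_p(x)^{m}\smile\bar\mu_c^{\eta_c}
\]
by the binomial theorem and retain only the term in which exactly $A$ factors of $\mu_p(h)$ contribute $\langle K,h\rangle$ at the $M_{\fs}$-factor, while the remaining $2\ell$ factors contribute the symmetric-product class on $\Sym^{\ell}(X)$, producing the maximal power $Q_X(h)^{\ell}$ after integration. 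The relevant combinatorics are the binomial coefficient $\binom{\delta-2m}{A}$ together with the standard evaluation
\[
\int_{\Sym^{\ell}(X)}\bigl(\text{slant class of }h\bigr)^{2\ell}=\frac{(2\ell)!}{2^{\ell}\,\ell!}\,Q_X(h)^{\ell},
\]
exactly as in the wall-crossing computation of \cite[Section 6]{KotschickMorgan}. Multiplying these factors already gives the combinatorial shape $\frac{(\delta-2m)!}{A!\,\ell!\,2^{\ell}}$.

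Third, I would track the remaining contributions: the integration of $\bar\mu_c^{\eta_c}$ over the $S^1$-fibers of the link (using that $\bar\mu_c$ has multiplicity two and the stabilizer on $\bar\sM^{*,0}_{\ft_n}$ is $\{\pm 1\}$), the Euler-class contribution from the obstruction bundle paired with the $\mu_p(x)^{m}$ factor, and the comparison of orientations between the symmetric-product factor, the Seiberg-Witten moduli space, and the link orientation fixed in \cite[Lemma 7.1.8 and Section 7.1.3]{FL5}. The obstruction-bundle Euler class and $\bar\sW^{\eta_c}$ integration are expected to contribute an overall power of $2$ combining with the $2^{-\ell}$ above to yield the total factor $2^{\ell-\delta}$, while the orientation comparison supplies the sign $(-1)^{m+\ell}$. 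Dividing the final expression by the $\SW_X(\fs)$ prefactor of \eqref{eq:MainEquation} and comparing to the extracted coefficient of $\langle K,h\rangle^{A}Q_X(h)^{\ell}$ then yields formula \eqref{eq:HighestPowerOfIntersectionForm}.

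The principal obstacle is bookkeeping rather than conceptual: the sign $(-1)^{m+\ell}$ and the exact power $2^{\ell-\delta}$ must be assembled from several independent sources (link orientation from \cite[Lemma 7.1.8]{FL5}, the multiplicity-two normalization of $\bar\mu_c$, the $S^1$-quotient of $\bar\sM_{\ft_n}$, the Euler integration over the obstruction bundle, and the sign produced by integrating $\mu_p(x)^m$), and these must fit together consistently with the conventions in \cite{FL5}. The computation is the Seiberg-Witten analogue of the anti-self-dual leading-term calculation of \cite[Theorem 6.1.1]{KotschickMorgan}, and the preliminary description of $\bL_{\ft_n,\fs}$ in \cite[Section 7]{FL5} is precisely the input that reduces it to the symmetric-product integral $\int_{\Sym^{\ell}(X)}(\cdot)^{2\ell}$ computed above.
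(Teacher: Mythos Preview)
Your overall strategy matches the paper's: reduce the pairing on $\bL_{\ft_n,\fs}$ to a cohomological computation over $\Sym^\ell(X)$ via the virtual-neighborhood description of \cite{FL5}, pick off the $Q_X^\ell$-term using the standard symmetric-product integral $\int_{\Sym^\ell(X)}S^\ell(h)^{2\ell}=\frac{(2\ell)!}{2^\ell\ell!}Q_X(h)^\ell$, and then chase the residual powers of $2$ and signs. Two points, however, are off.

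First, in the situation of Theorem~\ref{thm:VanishingCobordism} one has $\eta_c=0$ (see \eqref{eq:vanishing_cobordism_theorem_proof_delta_choice}), so there is no $\bar\mu_c^{\eta_c}$ or $\bar\sW^{\eta_c}$ contribution to track; the extra cohomology classes that must be paired are instead the \emph{obstruction} Euler classes $\bar e_s=(-\nu)^{r_\Xi}$ and $\bar e_I$, and the paper invokes the explicit restriction $\bar e_I|_{\pi_X^{-1}(\bx)}=(-2)^{-\ell}\nu^\ell$ from \cite[Lemma 4.12]{FLLevelOne}. Your sketch does not name these classes and therefore leaves the actual source of the power $2^{\ell-\delta}$ unspecified.

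Second, the paper does not extract the coefficient by binomial expansion in a single variable $h$. Instead it passes to the polarized (multilinear) form, chooses $h_1,\dots,h_A\notin\Ker K$ and $h_{A+1},\dots,h_{\delta-2m}\in\Ker K$ with $Q_X(h_u)=1$, so that all terms with $k<\ell$ in \eqref{eq:MainEquation} are killed by the $\Ker K$ condition and only $a_{A,0,\ell}$ survives (Corollary~\ref{cor:LeadingTermPolarizedOfSWLinkPairing}). The remaining pairing is then pushed to a single fiber $\pi_X^{-1}(\bx)$ and computed via the degree-$(-1)^\ell 2^{r_N+\ell-1}$ branched cover $M_\fs\times\CC\PP^{r_N+2\ell-1}\to\pi_X^{-1}(\bx)\cap\bL^{\vir}_{\ft_n,\fs}$ of Lemma~\ref{lem:BranchedCoverOfLink}, on which $\nu$ pulls back to $2\tilde\nu$. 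This branched-cover step is what makes the sign and $2$-power bookkeeping tractable, and it is missing from your outline.
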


\begin{rmk}
Although the computation of
the precise value of the coefficient in \eqref{eq:HighestPowerOfIntersectionForm}
is quite delicate,
we are fortunate to require only the result that
$a_{A,0,\ell}$
is non-zero.
\end{rmk}

\begin{rmk}
The methods in this
section allow one to compute the coefficients $a_{i,j,\ell}$
in greater generality (for example, without the assumption that $c_1(\ft)=0$).
Because Theorem \ref{thm:SCST} does not require greater generality and indeed, as noted
in Remark \ref{rmk:VaryingCoeff}, requires the assumption that $c_1(\ft)=0$, we omit the
proof of the more general result in the interest of clarity.
\end{rmk}

\subsection{A neighborhood of a top stratum point}
Let $\bx\in\Sym^\ell(X)$
be in the top stratum of $\Sym^\ell(X)$.  In this section,
we collect some results needed in the proof of
Proposition \ref{prop:LeadingTerm} about the topology of a
neighborhood of $M_\fs\times\{\bx\}$  in $\bar\sM_\ft/S^1$.
We describe this
using the language of virtual neighborhoods (also called Kuranishi models).
(See McDuff and Wehrheim \cite{McDuff_Wehrheim_2017plms, McDuff_Wehrheim_2017gt, McDuff_Wehrheim_2018} for one approach to their detailed definition and for references therein to other approaches.) For additional details that we omit here for the sake of brevity, see Feehan and Leness \cite{FL5}.
Recall that a \emph{smoothly-stratified space} is a topological space stratified by smooth manifolds \cite[Section 21.1]{KMBook}.
For the purposes of this
article,
a \emph{virtual neighborhood} of a subspace $Y$ of a
smoothly-stratified
space $Z$
is a
smoothly-stratified
space $W$ with $Y\subset W$,
 together with a
smoothly-stratified
homeomorphism between $Z$ and
and a subspace $Z_W\subset W$ whose intersection with each stratum $W_\Si$ of $W$ is given by
the zero locus of a
transversely-vanishing section of a vector bundle $V_\Si\to W_\Si$,
referred to as the \emph{obstruction bundle},
with the property that
the restriction of this homeomorphism to $Y\subset Z$ is the identity
map
on $Y$.
Although the definition of the intersection numbers appearing in
equation \eqref{eq:CobordismFormulaIntForm} (and thus defining the
coefficient \eqref{eq:HighestPowerOfIntersectionForm}) requires
a smooth structure on the link $\bL_{\ft,\fs}$, the equality \eqref{eq:LinkPairingAsCohomology}
turns these intersection numbers into a cohomological pairing which allows us
to work in the topological category.
To keep the exposition simple, we shall leave discussions of smooth structures
to the proof   of \eqref{eq:LinkPairingAsCohomology} in \cite{FL5} as much as possible.

\subsubsection{The lower-level moduli space}
Because $\dim M_\fs=0$, the virtual normal bundle construction
of \cite[Theorem 3.21]{FL2a} gives a homeomorphism
between
a neighborhood in $\sM_{\ft(\ell)}/S^1$ of a point in $M_\fs$
and $\bchi_{\ft(\ell),\fs}^{-1}(0)/S^1$, where
\begin{equation}
\bchi_{\ft(\ell),\fs}:
\CC^{r_N}\to\CC^{r_\Xi}
\end{equation}
is a continuous, $S^1$-equivariant map with $0\in g_{\ft(\ell),\fs}^{-1}(0)$
and which is smooth away from the origin and vanishes transversely away from the origin.
The dimensions satisfy
\begin{equation}
\label{eq:DimensionRelations}
r_N-r_\Xi
=
\frac{1}{2}\dim\sM_{\ft(\ell)}
=
\frac{1}{2}\dim\sM_\ft -3\ell .
\end{equation}
We further note that because $\dim M_\fs=0$ and $M_\fs$ is compact and oriented,
$M_\fs$ is a finite
set of points.  If $1\in H^0(M_\fs;\ZZ)$ is
a generator given by an orientation of $M_\fs$, then
\begin{equation}
\label{eq:ZeroDimSW}
\langle
1,[M_\fs]
\rangle
=
\SW_X(\fs),
\end{equation}
as this pairing is just the count with sign of the points in the oriented moduli space $M_\fs$.

\subsubsection{The neighborhood of $M_\fs\times\{\bx\}$}
In
\cite[Chapters 6 \& 7]{FL5}, we constructed a virtual neighborhood $\bar\sM^{\vir}_{\ft,\fs}$
of $M_\fs\times\Sym^\ell(X)$ in $\bar\sM_\ft$
that admits a continuous, surjective map (see
\cite[Lemma 6.9.2]{FL5}),
\begin{equation}
\label{eq:ProjToSymmProduct}
\pi_X: \bar\sM^{\vir}_{\ft,\fs}
\to\Sym^\ell(X).
\end{equation}
The space $\bar\sM^{\vir}_{\ft,\fs}$ is stratified by smooth manifolds and contains a
subspace $\bar\sO_{\ft,\fs}$ which is homeomorphic to
a neighborhood $\bar\sU_{\ft,\fs}$ of $M_\fs\times\Sym^\ell(X)$ in $\bar\sM_\ft$.
Let $\sO_{\ft,\fs}$ be
the intersection of $\bar\sO_{\ft,\fs}$ with the top stratum of $\bar\sM^{\vir}_{\ft,\fs}$.
Then $\sO_{\ft,\fs}$ is the zero locus of a transversely vanishing
section of a vector bundle of rank $2r_\Xi+2\ell$ over the top stratum of $\bar\sM^{\vir}_{\ft,\fs}$
and $\sO_{\ft,\fs}$ is diffeomorphic to the top stratum of $\bar\sU_{\ft,\fs}$.
The top stratum
of $\bar\sM^{\vir}_{\ft,\fs}$ has dimension
determined by
$$
\frac{1}{2} \dim \sM^{\vir}_{\ft,\fs}=\frac{1}{2} \dim\sM_\ft + r_\Xi+\ell.
$$
There is an $S^1$ action on $\bar\sM^{\vir}_{\ft,\fs}$ which restricts to the
$S^1$ action on $\bar\sM_\ft$
discussed in Section \ref{subsubsec:Compactification}.
This $S^1$ action is free on the complement of its fixed point
set, $M_\fs\times\Sym^\ell(X)\subset \bar\sM^{\vir}_{\ft,\fs}$.

Let $\Delta\subset\Sym^\ell(X)$ be the `big diagonal', given by points
$\{x_1,\dots,x_\ell\}$ where $x_i=x_j$ for some $i\neq j$.
For $\bx\in  \Sym^\ell(X)\setminus\Delta$,
let $U$ be an open set,
$$
\bx \in U
\Subset
\Sym^\ell(X)\setminus\Delta,
$$
and let $\tilde U\subset X^\ell$ be the pre-image of $U$ under the
branched cover $X^\ell\to \Sym^\ell(X)$.
Let $\CSO(3)$ be the open cone on $\SO(3)$.
For $U$ sufficiently small, we define
\begin{equation}
\label{eq:GenericPointNgh}
N(\ft,\fs,U)
:=
M_\fs\times
\CC^{r_N}\times \CSO(3)^\ell \times_{\fS_\ell} \tilde U,
\end{equation}
where $\fS_\ell$ is the symmetric group on $\ell$ elements, acting diagonally by permutation
on the $\ell$ factors in $\CSO(3)^\ell$ and $\tilde U$.
Because $U$ is contained in
the top stratum of $\Sym^\ell(X)$,
the construction of $\bar\sM^{\vir}_{\ft,\fs}$ in
\cite[Section 6.6]{FL5}
and the map $\pi_X$ in
\cite[Lemma 6.9.2]{FL5} imply that
there is a commutative diagram,
\begin{equation}
\label{eq:GenericPointNghProj}
\begin{CD}
N(\ft,\fs,U)
@>\bga>>
\bar\sM^{\vir}_{\ft,\fs}
\\
@VVV @V \pi_X VV
\\
U @>>> \Sym^\ell(X)
\end{CD}
\end{equation}
where
\begin{enumerate}
\item
The horizontal maps are open embeddings,
\item
The vertical map on the left is projection onto the factor $U$,
\item
The image of $\bga$
is a neighborhood of $M_\fs\times\{\bx\}$ in $\bar\sM^{\vir}_{\ft,\fs}$, and
\item
The embedding $\bga$ is equivariant with respect to the diagonal $S^1$ action on
the factors of $\CC$ and $\SO(3)$ in
\eqref{eq:GenericPointNgh} and the $S^1$ action on $\bar\sM^{\vir}_{\ft,\fs}$.
\end{enumerate}
Observe that because $U$ is in the top stratum of $\Sym^\ell(X)$, the group
$\fS_\ell$ acts freely on $\tilde U$.  Hence, for $\bx\in U$, the pre-image of $\bx$
under the left vertical arrow in the diagram \eqref{eq:GenericPointNghProj}
is
$$
M_\fs\times
\CC^{r_N}\times \CSO(3)^\ell\times\{\bx\}.
$$
The commutativity of the diagram
\eqref{eq:GenericPointNghProj} implies that for
$\bx\in U$, the embedding $\bga$ defines
a homeomorphism,
\begin{equation}
\label{eq:PreImageOfGenericPoint}
M_\fs\times
\CC^{r_N}\times \CSO(3)^\ell\times\{\bx\}
\to
\pi_X^{-1}(\bx).
\end{equation}
Note that for $\bx \in U$ represented by $\{x_1,\ldots,x_\ell\}$, each $x_l$ has multiplicity one, by definition of $\Delta$, for $1\leq l \leq \ell$.

\begin{rmk}
The virtual neighborhood $\bar\sM^{\vir}_{\ft,\fs}$ is a union of
the domains of the gluing maps defined in \cite{FL3}.  Therefore,
the space \eqref{eq:GenericPointNgh} can be understood as follows.
The factor $M_\fs\times \CC^{r_N}$ represents pairs of `almost monopoles'
on the \spinu structure $\ft(\ell)$.  The factors $\CSO(3)$ represent
centered, charge-one, framed instantons
on $S^4$ which are spliced onto
pairs defined by $M_\fs\times\CC^{r_n}$ at the points $\{x_1,\dots,x_\ell\} \subset X$
defined by the factor $U \Subset \Sym^\ell(X) \less \Delta$.
This gluing construction is described in detail in \cite{FL3}, \cite{FLLevelOne},
\cite{FL5} and is similar to that described for the moduli space of anti-self-dual
connections in \cite[Section 7.2]{DK}, \cite[Section 3.4]{FrM}, and \cite{TauSelfDual, TauIndef, TauFrame}.
\end{rmk}

For the cone point $c\in\CSO(3)$,
define $\bc_\ell \in \CSO(3)^\ell$ by
$$
\bc_\ell= \underbrace{\{c\}\times\{c\}\times\dots\times\{c\}}_{\text{$\ell$ copies}} \in \CSO(3)^\ell.
$$
Because $\bc_\ell$ is a fixed point of the $\fS_\ell$ action on $\CSO(3)^\ell$,
\begin{equation}
\label{eq:ReducibleStratum}
\bga^{-1}\left(M_\fs\times\Sym^\ell(X)\right)
=
M_\fs\times \{0\}\times \{\bc_\ell\}\times U
\subset
M_\fs\times\CC^{r_N}\times \CSO(3)^\ell\times_{\fS_\ell} \tilde U,
\end{equation}
where $\bga$ is the embedding in \eqref{eq:GenericPointNghProj}.

\subsubsection{The link and its branched cover}

In
\cite[Section 8.1.1]{FL5}, we constructed a link
$\bL^{\vir}_{\ft,\fs}\subset\bar\sM^{\vir}_{\ft,\fs}/S^1$
of $M_\fs\times\Sym^\ell(X)\subset \bar\sM^{\vir}_{\ft,\fs}/S^1$.
We will need the following description of $\pi_X^{-1}(\bx)\cap \bL^{\vir}_{\ft,\fs}$
and a branched cover of this space.

\begin{lem}
\label{lem:LinkOfGenericPoint}
For $\bx\in\Sym^\ell(X)\setminus\Delta$,
the space
$\pi_X^{-1}(\bx)\cap\bL^{\mathrm{vir}}_{\ft,\fs}$
is
homeomorphic to the link of
$$
M_\fs\times
\{0\}\times \{\bc_\ell\}\times\{\bx\}
\quad\text{in}\quad
M_\fs
\times
\CC^{r_N}\times_{S^1}
\CSO(3)^\ell\times\{\bx\}.
$$
\end{lem}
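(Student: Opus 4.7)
The plan is to transport the statement along the equivariant homeomorphism $\bga$ from the local model \eqref{eq:GenericPointNgh} into $\bar\sM^{\vir}_{\ft,\fs}$, and then to use compatibility of $\bga$ with $\pi_X$ to cut down to the fiber over $\bx$.

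First, choose the neighborhood $U$ of $\bx$ small enough that $U \Subset \Sym^\ell(X) \setminus \Delta$, so that $\fS_\ell$ acts freely on $\tilde U \subset X^\ell$, and so that the diagram \eqref{eq:GenericPointNghProj} applies. By property (3) of that diagram, the image of $\bga$ is a neighborhood of $M_\fs \times \{\bx\}$ in $\bar\sM^{\vir}_{\ft,\fs}$, and by \eqref{eq:ReducibleStratum} the preimage of the fixed-point stratum $M_\fs \times \Sym^\ell(X)$ is precisely $M_\fs \times \{0\} \times \{\bc_\ell\} \times U$ (using that $\fS_\ell$ fixes $\bc_\ell$). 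By property (4) of \eqref{eq:GenericPointNghProj}, $\bga$ is $S^1$-equivariant for the diagonal action on the $\CC^{r_N}$ and $\CSO(3)^\ell$ factors and trivial action on $M_\fs$ and $\tilde U$. Away from the fixed locus $M_\fs \times \{0\} \times \{\bc_\ell\} \times U$, the $S^1$-action is free (this is where the virtual neighborhood has stabilizer $\{\pm 1\}$ after passing to the moduli space, but the identification is what matters here).

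Next I would pass to quotients. Dividing both sides by $S^1$, the equivariant open embedding $\bga$ descends to an open embedding
\[
\bar\bga:\; M_\fs \times \bigl(\CC^{r_N} \times \CSO(3)^\ell\bigr)/S^1 \times_{\fS_\ell} \tilde U \;\longrightarrow\; \bar\sM^{\vir}_{\ft,\fs}/S^1,
\]
onto a neighborhood of $M_\fs \times \{\bx\}$, sending the fixed locus onto a neighborhood of $M_\fs \times U$ inside $M_\fs \times \Sym^\ell(X)$. By definition of the link $\bL^{\vir}_{\ft,\fs}$ in \cite[Proposition 8.0.4]{FL5} as the link of $M_\fs \times \Sym^\ell(X)$ in $\bar\sM^{\vir}_{\ft,\fs}/S^1$, the restriction $\bar\bga^{-1}(\bL^{\vir}_{\ft,\fs})$ is the link of $M_\fs \times \{0\} \times \{\bc_\ell\} \times U$ in the local model, at least within a small neighborhood of the fiber over $\bx$.

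Finally I would restrict to the fiber over $\bx$. The commutativity of \eqref{eq:GenericPointNghProj} shows that $\pi_X \circ \bga$ is the projection onto $U \subset \Sym^\ell(X)$, so the preimage under $\bar\bga$ of $\pi_X^{-1}(\bx)$ is the single fiber
\[
M_\fs \times \bigl(\CC^{r_N} \times \CSO(3)^\ell\bigr)/S^1 \times \{\bx\},
\]
where we have used that $\fS_\ell$ acts freely on $\tilde U$ and $\bx \notin \Delta$ so that each point of $\bx$ has a unique preimage in $\tilde U$. Intersecting the link identification with this fiber yields a homeomorphism between $\pi_X^{-1}(\bx) \cap \bL^{\vir}_{\ft,\fs}$ and the link of $M_\fs \times \{0\} \times \{\bc_\ell\} \times \{\bx\}$ inside $M_\fs \times \CC^{r_N} \times_{S^1} \CSO(3)^\ell \times \{\bx\}$, as claimed.

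The main technical point to verify carefully is the compatibility of the link construction with the fibration $\pi_X$: one must check that the link of $M_\fs \times \Sym^\ell(X)$, intersected with a fiber of $\pi_X$, agrees with the link of the corresponding slice of the fixed locus inside that fiber. In the local model this is evident from the product structure of \eqref{eq:GenericPointNgh}, since a tubular neighborhood of the fixed locus is a product over $\tilde U/\fS_\ell \cong U$; the remaining work is to ensure that the link defined in \cite[Proposition 8.0.4]{FL5} is compatible with this product structure under $\bga$, which follows from $\bga$ being an $S^1$-equivariant open embedding covering the identity on $U$.
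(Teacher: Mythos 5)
Your proof is correct and follows essentially the same route as the paper: use the $S^1$-equivariant local model from \eqref{eq:GenericPointNghProj}, identify the preimage of the fixed locus via \eqref{eq:ReducibleStratum}, and restrict to the fiber over $\bx$ using the homeomorphism \eqref{eq:PreImageOfGenericPoint} to read off the link. The only cosmetic difference is that you pass to the $S^1$-quotient of the whole local model before restricting to the fiber over $\bx$, whereas the paper restricts to the fiber first and then quotients; this changes nothing of substance.
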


\begin{proof}
From the description in \eqref{eq:ReducibleStratum} of the
intersection of the image $\bga$
with $M_\fs\times\Sym^\ell$ and by the $S^1$ equivariance
of this embedding,
we see that pre-image of $M_\fs\times\Sym^\ell(X)$
under the homeomorphism \eqref{eq:PreImageOfGenericPoint}
is
$$
M_\fs\times
\{0\}\times \{\bc_\ell\}\times\{\bx\}.
$$
Because the homeomorphism \eqref{eq:PreImageOfGenericPoint} is $S^1$ equivariant,
it identifies the link of the preceding space in the $S^1$ quotient
of the domain of \eqref{eq:PreImageOfGenericPoint}
with
$\pi_X^{-1}(\bx)\cap\bL^{\vir}_{\ft,\fs}$, as asserted.
\end{proof}

The computations in our proof of Proposition
\ref{prop:LeadingTerm} require the following branched cover of this link.

\begin{lem}
\label{lem:BranchedCoverOfLink}
There is a degree
$(-1)^{\ell} 2^{r_N+\ell-1}$ branched cover
$$
\tilde f:M_\fs\times\CC\PP^{r_N+2\ell-1} \to \pi_X^{-1}(\bx)\cap\bL^{\mathrm{vir}}_{\ft,\fs}.
$$
If $\nu$ is the first Chern class of the $S^1$ action on $\bar\sM^{\mathrm{vir}}_{\ft,\fs}$,
then
\begin{equation}
\label{eq:PullbackOfS1Action}
\tilde f^*\nu=1\times 2 \tilde \nu,
\end{equation}
where $1\in H^0(M_\fs;\ZZ)$  satisfies \eqref{eq:ZeroDimSW}
and $\tilde\nu\in H^2(\CC\PP^{r_N+2\ell-1};\ZZ)$ satisfies
\begin{equation}
\label{eq:NegHyperPlaneFormula}
\langle \tilde\nu^{r_N+2\ell-1},[\CC\PP^{r_N+2\ell-1}]\rangle=(-1)^{r_N+2\ell-1}.
\end{equation}
\end{lem}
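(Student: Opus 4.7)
The plan is to use the identification $\SO(3) = S^3/\{\pm 1\}$ to obtain an explicit local model for the link via Lemma \ref{lem:LinkOfGenericPoint}, construct $\tilde f$ as the natural quotient map, and then verify the Chern class relation and the signed degree by a careful bookkeeping argument.

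First I would identify $\CSO(3)\cong\CC^2/\{\pm 1\}$ as an $S^1$-space, with the $S^1$-action induced from the diagonal scalar multiplication on $\CC^2$ (whose central $\{\pm 1\}$ acts trivially on $\CSO(3)$, so the effective action factors through $S^1\to S^1/\{\pm 1\}\cong S^1$). Substituting this into Lemma \ref{lem:LinkOfGenericPoint}, the space $\pi_X^{-1}(\bx)\cap\bL^{\vir}_{\ft,\fs}$ is realized as $M_\fs$ times the link of the origin in $(\CC^{r_N}\times\CC^{2\ell})/(S^1\times\{\pm 1\}^\ell)$, where $\{\pm 1\}^\ell$ acts factor-wise on the $(\CC^2)^\ell$ coordinates and trivially on $\CC^{r_N}$, and $S^1$ acts diagonally by scalar multiplication. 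The link of the origin in the $S^1$-quotient alone is $\CC\PP^{r_N+2\ell-1}$, so I would define $\tilde f$ to be the natural quotient map from $M_\fs\times\CC\PP^{r_N+2\ell-1}$ onto the further $\{\pm 1\}^\ell$-quotient.

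To establish $\tilde f^*\nu = 1\times 2\tilde\nu$, the key observation is that the $S^1$-action on $\bar\sM^{\vir}_{\ft,\fs}$ has stabilizer $\{\pm 1\}$ along the reducible locus $M_\fs\times\Sym^\ell(X)$, since the center of the $\SO(3)$ gauge group acts trivially on reducibles. Hence the $S^1$-bundle defining $\nu$ descends from a $\{\pm 1\}$-quotient of the tautological $S^1$-bundle on the universal cover, and pulling back via $\tilde f$ to $\CC\PP^{r_N+2\ell-1}$ doubles the first Chern class to $2\tilde\nu$; the sign convention \eqref{eq:NegHyperPlaneFormula} is then verified by matching against the usual Hopf bundle convention on $\CC\PP^{r_N+2\ell-1}$ and the orientations fixed in \cite[Section 7.1.3]{FL5}.

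The signed degree $(-1)^\ell 2^{r_N+\ell-1}$ would be extracted by combining the topological covering degree of the $\{\pm 1\}^\ell$-quotient, the orbifold multiplicity arising from the $S^1$-stabilizer (responsible for the additional factor $2^{r_N-1}$ beyond the naive $2^\ell$), and the orientation comparison between $\CC\PP^{r_N+2\ell-1}$ (with its canonical complex orientation) and the link $\pi_X^{-1}(\bx)\cap\bL^{\vir}_{\ft,\fs}$ (with the orientation fixed in \cite[Section 7.1.3]{FL5}). The principal obstacle will be this final degree computation: the factor $2^{r_N+\ell-1}$ is more subtle than the naive covering degree, and its derivation requires careful tracking of both the orbifold structure of the target link and the doubling of $\nu$; the sign $(-1)^\ell$ should be determined factor-by-factor from the $\ell$ bubble points in $\bx$, each contributing one sign from the local orientation comparison between $\CC^2$ and $\CSO(3)=\CC^2/\{\pm 1\}$.
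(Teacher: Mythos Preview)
Your map---the identity on the $\CC^{r_N}$ factor together with the $\{\pm 1\}^\ell$-quotient on $(\CC^2)^\ell$---is \emph{not} $S^1$-equivariant for the actions at hand, so it does not descend to a map of the $S^1$-quotients at all. The $S^1$-action on the target $\CC^{r_N}\times\CSO(3)^\ell$ is diagonal with weight one on $\CC^{r_N}$ and with the standard rotation on each $\CSO(3)$; lifting the latter to $\CC^2$ via your identification $\CSO(3)\cong\CC^2/\{\pm 1\}$ gives an action that, relative to the $\CC^{r_N}$ factor, has ``half'' the weight. Hence your domain $\CC^{r_N}\times\CC^{2\ell}$ with the diagonal weight-one action does \emph{not} sit over the target equivariantly, and the link in the target is not a $\{\pm 1\}^\ell$-quotient of $\CC\PP^{r_N+2\ell-1}$. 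This is exactly why your degree count stalls at the ``naive'' $2^\ell$ and you are forced to gesture at an unexplained orbifold correction producing the missing factor $2^{r_N-1}$.

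The fix in the paper is to compose with $z\mapsto z^2$ on each of the $r_N$ coordinates of $\CC^{r_N}$ as well. Then the resulting map
\[
\CC^{r_N+2\ell}\;\longrightarrow\;\CC^{r_N}\times\CSO(3)^\ell
\]
\emph{is} $S^1$-equivariant, provided $S^1$ acts on the target through the squaring homomorphism $S^1\to S^1$. Two things now follow immediately: (i) the pre-quotient branched cover has degree $2^{r_N}\cdot(-2)^\ell=(-1)^\ell 2^{r_N+\ell}$, and passing to the $S^1$-quotient divides by the degree $2$ of the squaring map on the circle orbit, yielding exactly $(-1)^\ell 2^{r_N+\ell-1}$; and (ii) because the covering doubles the $S^1$-weight, the pullback of $\nu$ is $2\tilde\nu$ on the nose, with $\tilde\nu$ the negative of the hyperplane class. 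Your stabilizer heuristic for the factor of $2$ in $\tilde f^*\nu$ is replaced by this direct weight-doubling argument, and the mysterious $2^{r_N-1}$ is accounted for by the squaring on the $\CC^{r_N}$ coordinates rather than by any orbifold bookkeeping.
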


\begin{proof}
The product of the degree $(-2)$ branched cover (see \cite[p. 423]{OzsvathBlowup} for an explanation
of the sign)
$\CC^2\to \CSO(3)$ with the map $z\to z^2$ on the factors of $\CC$
gives a degree $(-1)^\ell 2^{r_N+\ell}$ branched cover,
$$
M_\fs\times\CC^{r_N+2\ell}
\to
M_\fs\times
\CC^{r_N}\times \CSO(3)^\ell,
$$
mapping $M_\fs\times\{0\}$ to $M_\fs\times\{0\}\times\{\bc_\ell\}$
and
which is $S^1$ equivariant if $S^1$ acts with weight two on the image.
Consequently, this map takes the link of $M_\fs\times\{0\}$ in its domain
to the link of $M_\fs\times\{0\}\times\{\bc_\ell\}$ in its image.
By Lemma \ref{lem:LinkOfGenericPoint}, the link of $M_\fs\times\{0\}\times\{\bc_\ell\}$
in the $S^1$ quotient of
$M_\fs\times\CC^{r_N}\times \CSO(3)^\ell$ is homeomorphic to
$\pi_X^{-1}(\bx)\cap \bL^{\vir}_{\ft,\fs}$.
Hence, there is a degree
$(-1)^{\ell} 2^{r_N+\ell-1}$ branched cover
$$
\tilde f:M_\fs\times\CC\PP^{r_N+2\ell-1} \to \pi_X^{-1}(\bx)\cap \bL^{\vir}_{\ft,\fs}.
$$
Because this map doubles the weight of the $S^1$ action, $\tilde f^*\nu$ is twice the
first Chern class of the $S^1$ bundle,
$$
M_\fs\times\left( \CC^{r_n+2\ell}\setminus\{0\}\right)\times_{S^1}\CC
\to
M_\fs\times\CC\PP^{r_N+2\ell-1},
$$
whose first Chern class is $1\times\tilde\nu$, where $\tilde\nu$ is the negative of
the hyperplane class.
\end{proof}

\subsection{Multilinear algebra}
\label{subsec:MultLineAlg}
The proof of Proposition \ref{prop:LeadingTerm} requires us to consider the count of the number points in the intersection with
$\bL_{\ft,\fs}$ in \eqref{eq:MainEquation} as a symmetric multilinear map on $H_2(X;\RR)$ rather than a polynomial.
We thus introduce some notation to translate between the two concepts.

For a finite-dimensional, real vector space $V$, let $S_d(V)$ be the vector space
of $d$-linear, symmetric maps $M:V^{\otimes d}\to\RR$, and let $P_d(V)$ be the vector space
of degree $d$ homogeneous polynomials on $V$.  The map $\Phi:S_d(V)\to P_d(V)$ defined
by $\Phi(M)(v)=M(v,\dots,v)$ is an isomorphism of vector spaces (see \cite[Section 6.1.1]{FrM}).
For $M_i\in S^{d_i}(V)$, we define a product on $S_\bullet(V)=\oplus_{d\ge 0}S_d(V)$ by
\begin{equation}
\label{eq:SymmAlgProduct}
\begin{aligned}
{}&(M_1M_2)(h_1,\dots,h_{d_1+d_2})
\\
{}&:=
\frac{1}{(d_1+d_2)!}\sum_{\si\in\fS_{d_1+d_2}}
M_1(h_{\si(1)},\dots,h_{\si(d_1)})
M_2(h_{\si(d_1+1)},\dots,h_{\si(d_1+d_2)}),
\end{aligned}
\end{equation}
where $\fS_d$ is the symmetric group on $d$ elements.
When $S_\bullet(V)$ has this product and $P_\bullet(V)$ has
its usual product, $\Phi$ is an algebra isomorphism.

\begin{lem}
\label{lem:PolarizOfSWLinkPairing}
Continue the assumptions and notation of Proposition \ref{prop:LeadingTerm}.
For $n\in\NN$ as in Proposition \ref{prop:LeadingTerm},
let $\ft_n$ be a \spinu structure satisfying \eqref{eq:CharClassOfSpinuWithLa0}.
Then
\begin{equation}
\begin{aligned}
\label{eq:MultilinearVersion}
{}&
\#\left(\bar\sV(h_1\cdots h_{\delta-2m} x^m)\cap \bL_{\ft_n,\fs}\right)
\\
{}&
\quad=
\frac{SW_X(\fs)}{(\delta-2m)!}
\sum_{\begin{subarray}{l}i+2k\\=\delta-2m\end{subarray}}
\sum_{\si\in\fS_{\delta-2m}} a_{i,0,k}(\chi_h(X),c_1^2(X),0,0,m,\ell)
\\
{}&\qquad\times
\prod_{u=1}^i\langle K,h_{\si(u)}\rangle
\prod_{u=1}^{(\delta-2m-i)/2}Q_X(h_{\si(i+2u-1)},h_{\si(i+2u)}),
\end{aligned}
\end{equation}
where
$\fS_{\delta-2m}$ is the symmetric group on $(\delta-2m)$ elements and
$K=c_1(\fs)$.
\end{lem}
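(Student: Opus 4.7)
The plan is to obtain \eqref{eq:MultilinearVersion} from Theorem \ref{thm:SWLinkPairing} by applying the polarization isomorphism $\Phi$ of Section \ref{subsec:MultLineAlg}. The idea is that both sides of \eqref{eq:MultilinearVersion} define symmetric multilinear functions of $(h_1,\dots,h_{\delta-2m}) \in H_2(X;\RR)^{\otimes(\delta-2m)}$, and they agree when specialized to the diagonal $h_1 = \cdots = h_{\delta-2m} = h$; hence they must coincide as multilinear forms.

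First I would verify multilinearity. For the left-hand side this follows from the cohomological reinterpretation of the link pairing (equality \eqref{eq:LinkPairingAsCohomology} in \cite{FL5}): the intersection number equals the evaluation of $\mu_p(h_1) \smile \cdots \smile \mu_p(h_{\delta-2m}) \smile \bar\mu_p(x)^m$ on the top-stratum fundamental class of $\bL_{\ft_n,\fs}$. Since the map $\mu_p : H_2(X;\RR) \to H^2(\sM^*_{\ft_n}/S^1;\RR)$ of Section \ref{subsubsec:CohomClass} is linear and cup product is graded-commutative, the resulting intersection number is symmetric and multilinear in $(h_1,\dots,h_{\delta-2m})$. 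The right-hand side is manifestly a symmetric $\fS_{\delta-2m}$-average of multilinear expressions.

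Next I would specialize to $h_1 = \cdots = h_{\delta-2m} = h$. The left-hand side then becomes $\#(\bar\sV(h^{\delta-2m}x^m) \cap \bL_{\ft_n,\fs})$, to which I apply Theorem \ref{thm:SWLinkPairing} with the \spinu structure $\ft_n$. Because $\La = c_1(\ft_n) = 0$ by \eqref{eq:CharClassOfSpinuWithLa0}, every term of \eqref{eq:MainEquation} with $j>0$ drops out; by Lemma \ref{lem:LevelOfSWinLa0} the level is $\ell(\ft_n,\fs) = 2\chi_h(X)-n$; and by the hypothesis of the lemma $\eta_c = 0$. On the right-hand side of \eqref{eq:MultilinearVersion}, for each fixed $(i,k)$ with $i+2k=\delta-2m$ every term of the inner sum over $\sigma$ collapses to $\langle K,h\rangle^i Q_X(h)^k$, and the $(\delta-2m)!$ such terms cancel the prefactor $1/(\delta-2m)!$. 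Thus both sides reduce to
\begin{equation*}
SW_X(\fs) \sum_{i+2k=\delta-2m} a_{i,0,k}(\chi_h(X),c_1^2(X),0,0,m,\ell)\, \langle K,h\rangle^i Q_X(h)^k,
\end{equation*}
and so agree when evaluated on the diagonal.

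Finally, I invoke the isomorphism $\Phi: S_{\delta-2m}(H_2(X;\RR)) \to P_{\delta-2m}(H_2(X;\RR))$ of Section \ref{subsec:MultLineAlg}: two symmetric multilinear forms inducing the same polynomial under the diagonal substitution must coincide. The multilinear representative of $\langle K,h\rangle^i Q_X(h)^k$ produced by iterating the product \eqref{eq:SymmAlgProduct} is precisely the symmetric sum on the right-hand side of \eqref{eq:MultilinearVersion}, completing the identification. Since all substantive content is carried by Theorem \ref{thm:SWLinkPairing}, Lemma \ref{lem:LevelOfSWinLa0}, and the formal polarization identity, I do not anticipate any technical obstacle; the only point requiring attention is the bookkeeping that matches the explicit symmetric sum in \eqref{eq:MultilinearVersion} to the image of the monomial $\langle K,h\rangle^i Q_X(h)^k$ under $\Phi^{-1}$.
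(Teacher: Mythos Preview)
Your proposal is correct and follows essentially the same route as the paper: observe that $\La=0$ kills the $j>0$ terms in \eqref{eq:MainEquation}, then use the polarization isomorphism $\Phi$ of Section \ref{subsec:MultLineAlg} to pass between the polynomial identity of Theorem \ref{thm:SWLinkPairing} and the multilinear identity \eqref{eq:MultilinearVersion}. The paper compresses this into three sentences, while you spell out the multilinearity of the left-hand side via \eqref{eq:LinkPairingAsCohomology} and the identification of the right-hand side with $\Phi^{-1}$ of the polynomial via \eqref{eq:SymmAlgProduct}; these are reasonable elaborations but not a different argument.
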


\begin{proof}
Because we are assuming $\La=0$, all terms on the right-hand-side
of \eqref{eq:MainEquation} containing a factor of $\langle \La,h\rangle^j$
with $j>0$ vanish.
Applying $\Phi$ to both sides of \eqref{eq:MultilinearVersion} then
yields \eqref{eq:MainEquation} in
Theorem \ref{thm:SWLinkPairing}.  Because $\Phi$ is an isomorphism,
the result follows.
\end{proof}

The following corollary shows that the computation which will
yield the coefficient appearing in \eqref{eq:HighestPowerOfIntersectionForm}.

\begin{cor}
\label{cor:LeadingTermPolarizedOfSWLinkPairing}
Continue the notation and hypotheses of Lemma \ref{lem:PolarizOfSWLinkPairing}
and abbreviate $A=\delta-2m-2\ell$.
There is a class $h\in \Ker K\subset H_2(X;\RR)$ with $Q_X(h)=1$ and if
\begin{equation}
\label{eq:HomologyInKerK}
h_u=h \in \Ker K\subset H_2(X;\RR)
\quad \text{for $A+1\le u\le \delta-2m$,}
\end{equation}
then
\begin{equation}
\label{eq:MultilinearVersion1}
\begin{aligned}
{}&
\#\left(\bar\sV(h_1\cdots h_{\delta-2m} x^m)\cap \bL_{\ft,\fs}\right)
\\
{}&
\quad=
\frac{SW_X(\fs)A!(2\ell)!}{(\delta-2m)!}
a_{A,0,\ell}(\chi_h(X),c_1^2(X),0,0,m,\ell)
\prod_{u=1}^{A}\langle K,h_{u}\rangle.
\end{aligned}
\end{equation}
\end{cor}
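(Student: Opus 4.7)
The plan is to substitute the specified choice $h_{A+1} = \cdots = h_{\delta-2m} = h$ into the polarized link-pairing formula (\ref{eq:MultilinearVersion}) provided by Lemma \ref{lem:PolarizOfSWLinkPairing}, and to identify the unique surviving term in the resulting double sum indexed by pairs $(i,k)$ and permutations $\sigma \in \fS_{\delta-2m}$.

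First, I would establish the existence of a class $h \in \ker K \subset H_2(X;\RR)$ with $Q_X(h) = 1$. Since $b^+(X) \ge 3$ and $K \ne 0$, any maximal $Q_X$-positive subspace $P \subset H_2(X;\RR)$ has dimension $b^+(X) \ge 3$, so the linear functional $\langle K, \cdot \rangle$ restricted to $P$ has a kernel of dimension at least two. Any nonzero vector in this kernel has strictly positive self-intersection under $Q_X$, and rescaling produces the desired $h$.

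Next, I would analyze which summands in (\ref{eq:MultilinearVersion}) can contribute. For a summand indexed by $(i,k)$ with $i + 2k = \delta - 2m = A + 2\ell$: if $i > A$, then by pigeonhole at least one $u \in \{1,\ldots,i\}$ satisfies $\sigma(u) > A$, and then $\langle K, h_{\sigma(u)}\rangle = \langle K, h\rangle = 0$ kills the contribution; if $i < A$, then $k > \ell$, and the vanishing clause of Theorem \ref{thm:SWLinkPairing} gives $a_{i,0,k} = 0$. Hence the only surviving index is $(i,k) = (A,\ell)$.

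For this index, the surviving permutations $\sigma \in \fS_{\delta - 2m}$ are precisely those preserving the partition $\{1,\ldots,A\} \sqcup \{A+1,\ldots,\delta-2m\}$, of which there are $A! \cdot (2\ell)!$. Each restriction of $\sigma$ to $\{1,\ldots,A\}$ produces the same $K$-product $\prod_{u=1}^{A} \langle K, h_u\rangle$ by commutativity of multiplication in $\RR$, and each restriction to $\{A+1,\ldots,\delta-2m\}$ produces the $Q_X$-product $Q_X(h,h)^\ell = 1$. Assembling these factors in (\ref{eq:MultilinearVersion}) and dividing by the $(\delta-2m)!$ prefactor yields (\ref{eq:MultilinearVersion1}). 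The computation is essentially direct substitution once the right $h$ is chosen; the only subtlety is ensuring the combinatorial bookkeeping of permutations is correct, but no serious obstacle arises since the vanishing rests on the defining property of $\ker K$ and the already-established support condition on $a_{i,0,k}$ from Theorem \ref{thm:SWLinkPairing}.
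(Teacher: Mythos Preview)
Your proposal is correct and follows essentially the same approach as the paper's own proof: you establish the existence of $h$ via the positive subspace of dimension $b^+(X)\ge 3$, eliminate the summands with $i>A$ by pigeonhole and those with $k>\ell$ by the support condition in Theorem~\ref{thm:SWLinkPairing}, and then count the $A!(2\ell)!$ partition-preserving permutations, each contributing the same value. The only cosmetic difference is that your existence argument invokes $K\ne 0$, which is unnecessary (the paper just notes $\Ker K$ has codimension at most one), but this does not affect the validity of your argument.
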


\begin{proof}
Because $b^+(X)\ge 3$, $Q_X$ is positive on a three-dimensional subspace
$P\subset H_2(X;\RR)$.
The codimension of $\Ker K\subset H_2(X;\RR)$ is at most one so $P\cap \Ker K$
has dimension at least two.  Hence, there is
a class $h'\in \Ker K$ with $Q_X(h')>0$.  Then $h=h'/Q_X(h')^{1/2}\in \Ker K$ satisfies $Q_X(h)=1$, as required.

The assumption \eqref{eq:HomologyInKerK} implies that
only $A$ elements of $\{h_1,\dots,h_{\delta-2m}\}$
are not in
$\Ker K$.
Therefore,
in all terms in the sum in \eqref{eq:MultilinearVersion}
with $k<\ell$, we have $i=\delta-2m-2k>\delta-2m-2\ell=A$ and thus in such a term,
the product of the $i>A$ factors,
$$
\prod_{u=1}^i\langle K,h_{\si(u)}\rangle,
$$
must vanish.
Hence, all terms with $k<\ell$ in the sum in \eqref{eq:MultilinearVersion}
vanish.  We know the terms with $k>\ell$ vanish because the coefficients
$a_{i,0,k}$ with $k>\ell$ vanish by Theorem \ref{thm:SWLinkPairing},
so only the terms with $k=\ell$ are non-zero.
Thus, \eqref{eq:MultilinearVersion} and the equality $(\delta-2m-A)/2=\ell$ imply that
\begin{equation}
\label{eq:MultilinearVersion1.1}
\begin{aligned}
{}&
\#\left(\bar\sV(h_1\cdots h_{\delta-2m} x^m)\cap \bar\sW^{\eta_c}\cap \bL_{\ft,\fs}\right)
\\
{}&
\quad=
\frac{SW_X(\fs)}{(\delta-2m)!}
\sum_{\si\in\fS_{\delta-2m}} a_{A,0,\ell}(\chi_h(X),c_1^2(X),0,0,m,\ell)
\\
{}&\qquad\times
\prod_{u=1}^A\langle K,h_{\si(u)}\rangle
\prod_{u=1}^{\ell}Q_X(h_{\si(A+2u-1)},h_{\si(A+2u)}).
\end{aligned}
\end{equation}
If $\si_0\in\fS_{\delta-2m}$ and 
$\si_0(u) > A$ for $u\le A=i$, then the term given by that $\si_0$ in the sum on the right-hand side of
\eqref{eq:MultilinearVersion1.1} contains the factor
$$
\prod_{u=1}^A\langle K,h_{\si_0(u)}\rangle
\prod_{u=1}^{\ell}Q_X(h_{\si_0(A+2u-1)},h_{\si_0(A+2u)}),
$$
which vanishes by the assumption \eqref{eq:HomologyInKerK} that
$h_u\in\Ker K $ for $u>A$.
Thus, for $h_u$ as given in \eqref{eq:HomologyInKerK}, the sum over
$\fS_{\delta-2m}$ in \eqref{eq:MultilinearVersion1.1}
reduces to a sum over the subset
$$
\fS_{\delta-2m}(A)
:=
\{\si\in\fS_{\delta-2m}: \text{$\si(u)\le A$ for all $u\le A$}\}.
$$
The pigeonhole principle then implies that
elements of $\fS_{\delta-2m}(A)$ preserve
the subsets $\{1,\dots,A\}$ and $\{A+1,\dots,\delta-2m\}$.
If we identify $\fS_A$ and $\fS_{2\ell}$ (using $2\ell=\delta-2m-A$)
with the subgroups of $\fS_{\delta-2m}$ which are the identity on
$\{A+1,\dots,\delta-2m\}$ and $\{1,\dots,A\}$ respectively,
then there is an isomorphism
\begin{equation}
\label{eq:SymmGrpIsom}
S:
\fS_{A}\times \fS_{2\ell}
\to
\fS_{\delta-2m}(A), \quad S(\si_1,\si_2)=\si_1\si_2.
\end{equation}
This isomorphism, the identity \eqref{eq:MultilinearVersion1.1}, and the equality $(\delta-2m-A)/2=\ell$ imply that
\begin{equation}
\begin{aligned}
\label{eq:MultilinearVersion2}
{}&
\#\left(\bar\sV(h_1\cdots h_{\delta-2m} x^m)\cap \bar\sW^{\eta_c}\cap \bL_{\ft,\fs}\right)
\\
{}&
\quad=
\frac{SW_X(\fs)}{(\delta-2m)!}
\sum_{\si_1\in\fS_{A}}
\sum_{\si_2\in\fS_{2\ell}}
a_{A,0,\ell}(\chi_h(X),c_1^2(X),0,0,m,\ell)
\\
{}&\qquad\times
\prod_{u=1}^A\langle K,h_{\si_1(u)}\rangle
\prod_{u=1}^{\ell}Q_X(h_{\si_2(A+2u-1)},h_{\si_2(A+2u)}).
\end{aligned}
\end{equation}
Observe that for all $\si_1\in\fS_{A}$,
$$
\prod_{u=1}^A\langle K,h_{\si_1(u)}\rangle
=
\prod_{u=1}^A\langle K,h_u\rangle
$$
while for all $\si_2\in \fS_{2\ell}$,
$$
\prod_{u=1}^{\ell}Q_X(h_{\si_2(A+2u-1)},h_{\si_2(A+2u)})
=
Q_X(h)^\ell=1.
$$
Thus, all the $|\fS_{A}||\fS_{2\ell}|=A!(2\ell)!$   terms in the double sum in \eqref{eq:MultilinearVersion2}
are equal and we can rewrite \eqref{eq:MultilinearVersion2} as
\begin{equation}
\begin{aligned}
\label{eq:MultilinearVersion3}
{}&
\#\left(\bar\sV(h_1\cdots h_{\delta-2m} x^m)\cap \bar\sW^{\eta_c}\cap \bL_{\ft,\fs}\right)
\\
{}&
\quad=
\frac{SW_X(\fs)A!(2\ell)!}{(\delta-2m)!}
a_{A,0,\ell}(\chi_h(X),c_1^2(X),0,0,m,\ell)
\prod_{u=1}^A\langle K,h_{u}\rangle.
\end{aligned}
\end{equation}
This completes the proof of the corollary.
\end{proof}

\subsection{Cohomology classes and duality}
By
\cite[Proposition 9.1.1]{FL5}, there are a topological space
$\bL^{\vir}_{\ft,\fs}\subset\bar\sM^{\vir}_{\ft,\fs}/S^1$
with fundamental class $[\bL^{\vir}_{\ft,\fs}]$
and cohomology classes
$$
\barmu_p(h_i), \ \bar\mu_c, \ \bar e_I, \ \bar e_s
\in
H^\bullet\left(\bar\sM^{\vir}_{\ft,\fs}/S^1\setminus
\left(M_\fs\times\Sym^\ell(X)\right);\RR\right),
$$
such that
\begin{equation}
\label{eq:LinkPairingAsCohomology}
\begin{aligned}
{}&\#\left(
\bar\sV(h_1\cdots h_{\delta-2m} x^m)
\cap \bL_{\ft,\fs}
\right)
\\
{}&\quad=
\langle
\barmu_p(h_1)\smile\dots\smile\barmu_p(h_{\delta-2m})\smile\barmu_p(x)^m
\smile\bar e_I\smile\bar e_s,
[\bL^{\vir}_{\ft,\fs}]
\rangle.
\end{aligned}
\end{equation}
The cohomology classes $\bar e_I$ and $\bar e_s$ are extensions over the Uhlenbeck
compactification of the Euler class of components of the obstruction bundle.
The cohomology classes $\barmu_p(h_i)$ and $\barmu_p(x)$
are extensions of the classes $\mu_p(h_i)$ and $\mu_p(x)$
defined in Section \ref{subsubsec:CohomClass}.
For $\beta\in H_\bullet(X;\RR)$, the cohomology class $S^\ell(\beta)\in H^{4-\bullet}(\Sym^\ell(X);\RR)$
is defined by the property that, for $\tilde\pi:X^\ell\to\Sym^\ell(X)$ denoting
the degree-$\ell!$ branched covering map,
$$
\tilde\pi^*S^\ell(\beta)=\sum_{i=1}^n \pi_i^*\PD[\beta],
$$
where $\pi_i:X^\ell \to X$ is projection onto the $i$-th factor.
Thus (compare \cite[p. 454]{KotschickMorgan}),
\begin{equation}
\label{eq:SymmClassPairing}
\begin{aligned}
{}&\langle
S^\ell(h_1)\smile \dots\smile S^\ell(h_{2\ell+k}),[\Sym^\ell(X)]\rangle
\\
{}&\quad=
\begin{cases}
\displaystyle\frac{(2\ell)!}{\ell! 2^\ell} Q_X^\ell(h_1,\dots,h_{2\ell}) \PD[\bx] & \text{if $k=0$,}
\\
0 & \text{if $k>0$,}
\end{cases}
\end{aligned}
\end{equation}
where  $\bx\in \Sym^\ell(X)\setminus\Delta$ is a point in the top stratum.
Note that if $h_u=h$ for $u=1,\dots,2\ell$, then by the definition of the product in
\eqref{eq:SymmAlgProduct},
\begin{equation}
\label{eq:PolarizOfQell}
Q_X^\ell(h_1,\dots,h_{2\ell})
=
Q_X(h)^\ell.
\end{equation}
From
\cite[Proposition 9.1.1]{FL5}
and \cite[Definitions 9.4.8 \& 9.5.8]{FL5}
we have,
denoting $K=c_1(\fs)$, $\La=c_1(\ft)$, $h\in H_2(X;\RR)$, and a generator $x\in H_0(X;\ZZ)$,
\begin{equation}
\label{eq:CohomClasses}
\begin{aligned}
\barmu_p(h) &=\frac{1}{2}\langle \La-K,h\rangle \nu +\pi_X^*S^\ell(h),
\\
\barmu_p(x) &=-\frac{1}{4}  \nu^2 +\pi_X^*S^\ell(x),
\\
\bar e_\fs &= (-\nu)^{r_\Xi},
\end{aligned}
\end{equation}
where $\nu$ is the first Chern class of the $S^1$ action
on $\bar\sM^{\vir}_{\ft,\fs}$
and $\pi_X$ is defined in \eqref{eq:ProjToSymmProduct}.

\subsection{The computation}
We can now give the

\begin{proof}[Proof of Proposition \ref{prop:LeadingTerm}]
For $n\in\NN$,  as appearing in Proposition \ref{prop:LeadingTerm},
let $\ft_n$ be a \spinu structure satisfying \eqref{eq:CharClassOfSpinuWithLa0}.
We will apply Corollary \ref{cor:LeadingTermPolarizedOfSWLinkPairing}
to verify the expression \eqref{eq:HighestPowerOfIntersectionForm} for the coefficient $a_{A,0,\ell}$.
From the definitions of $\delta$, $A$, and $\ell$ in the statement of Proposition \ref{prop:LeadingTerm}
and the expression for $\delta$ in \eqref{eq:vanishing_cobordism_theorem_proof_delta_choice},
\begin{equation}
\label{eq:IndexRelations}
A+2\ell+2m=\delta
=\frac{1}{2}\dim\bL^{\asd}_{\ft_n}
=\frac{1}{2}\dim\sM_{\ft_n}-1.
\end{equation}
By hypothesis in Proposition \ref{prop:LeadingTerm}, there is a class $K\in B(X)$ with $K\neq 0$.  Let $\fs\in\Spinc(X)$
satisfy $c_1(\fs)=K$.
As in the proof of Corollary \ref{cor:LeadingTermPolarizedOfSWLinkPairing},
there are classes $h_0,h_0'\in H_2(X;\RR)$ which satisfy
\begin{equation}
\label{eq:AssumptionsOnh}
\langle K,h_0\rangle= 0, \quad Q_X(h_0)=1, \quad \langle K,h_0'\rangle=-1.
\end{equation}
Define
\begin{equation}
\label{eq:AssumptionsOnhu}
h_u :=
\begin{cases}
h_0' & \text{for $1\le u\le A$,}
 \\
h_0 & \text{for $A+1\le u\le \delta-2m$.}
 \end{cases}
\end{equation}
Corollary \ref{cor:LeadingTermPolarizedOfSWLinkPairing},
the identity
\eqref{eq:AssumptionsOnh}, and the definition \eqref{eq:AssumptionsOnhu} imply that
\begin{equation}
\begin{aligned}
\label{eq:MultilinearVersion1a}
{}&
\#\left(\bar\sV(h_1\cdots h_{\delta-2m} x^m)\cap \bL_{\ft_n,\fs}\right)
\\
{}&
\quad=
(-1)^A
\frac{SW_X(\fs)A!(2\ell)!}{(\delta-2m)!}
a_{A,0,\ell}(\chi_h(X),c_1^2(X),0,0,m,\ell).
\end{aligned}
\end{equation}
We now use the results of the previous sections to compute
the left-hand-side of \eqref{eq:MultilinearVersion1a}.
Applying \eqref{eq:LinkPairingAsCohomology} with $\ft = \ft_n$ gives
\begin{equation}
\label{eq:LinkPairingAsCohomology1}
\begin{aligned}
{}&\#\left(
\bar\sV(h_1\cdots h_{\delta-2m} x^m)
\cap \bL_{\ft_n,\fs}
\right)
\\
{}&\quad=
\langle
\barmu_p(h_1)\smile\cdots\smile\barmu_p(h_{\delta-2m})\smile\barmu_p(x)^m
\smile\bar e_I\smile\bar e_s,
[\bL^{\vir}_{\ft_n,\fs}]
\rangle.
\end{aligned}
\end{equation}
By \eqref{eq:CohomClasses}
(with $\Lambda = c_1(\ft_n) = 0$),
\eqref{eq:AssumptionsOnh}, and \eqref{eq:AssumptionsOnhu},
$$
\barmu_p(h_u)=
\begin{cases}
\frac{1}{2}\nu+\pi_X^*S^\ell(h_0') & \text{for $1\le u\le A$}
\\
\pi_X^*S^\ell(h_0) & \text{for $A+1\le u\le \delta-2m$}.
\end{cases}
$$
Substituting the preceding expressions for $\barmu_p(h_u)$ and the expressions for $\barmu_p(x)$ and $\bar e_s$ from \eqref{eq:CohomClasses}
into \eqref{eq:LinkPairingAsCohomology1}
and using the equality $\delta-2m-A=2\ell$ in \eqref{eq:IndexRelations}
gives
\begin{equation}
\label{eq:LinkPairingAsCohomology2}
\begin{aligned}
{}&\#\left(
\bar\sV(h_1\cdots h_{\delta-2m} x^m)
\cap \bL_{\ft_n,\fs}
\right)
\\
{}&\quad=
\left\langle
\left(
\frac{1}{2}\nu+\pi_X^*S^\ell(h_0')
\right)^A
\smile
\left(
\pi_X^*S^\ell(h_0)
\right)^{2\ell}
\smile
\left( -\frac{1}{4} \nu^2+\pi_X^* S^\ell(x)\right)^m \right.
\\
{}&\qquad\qquad
\left.\smile
(-\nu)^{r_\Xi}
\smile \bar e_I,
[\bL^{\vir}_{\ft_n,\fs}]
\right\rangle.
\end{aligned}
\end{equation}
Applying
the computations \eqref{eq:SymmClassPairing} and \eqref{eq:PolarizOfQell} and our assumption
in \eqref{eq:AssumptionsOnh} that $Q_X(h_0)=1$ to \eqref{eq:LinkPairingAsCohomology2}
yields
\begin{equation}
\label{eq:LinkPairingAsCohomology3}
\begin{aligned}
{}&\#\left(
\bar\sV(h_1\cdots h_{\delta-2m} x^m)
\cap \bL_{\ft_n,\fs}
\right)
\\
{}&\quad=
\frac{(2\ell)!}{\ell! 2^\ell}
\left
\langle
\left(
\frac{1}{2}\nu+\pi_X^*S^\ell(h_0')
\right)^A
\smile
\left( -\frac{1}{4} \nu^2+\pi_X^* S^\ell(x)\right)^m
\right.
\\
{}&\qquad\qquad\qquad
\left.
\smile
(-\nu)^{r_\Xi}
\smile
\bar e_I,
\pi_X^*\PD[\bx]\cap [\bL^{\vir}_{\ft_n,\fs}]
\right\rangle.
\end{aligned}
\end{equation}
Because
$$
S^\ell(h_0')\smile\PD[\bx]
=
0
=
S^\ell(x)\smile\PD[\bx],
$$
by dimension-counting on $\Sym^\ell(X)$, the identity
\eqref{eq:LinkPairingAsCohomology3} simplifies to
\begin{equation}
\label{eq:LinkPairingAsCohomology4}
\begin{aligned}
{}&\#\left(
\bar\sV(h_1\cdots h_{\delta-2m} x^m)
\cap \bL_{\ft_n,\fs}
\right)
\\
{}&\quad=
\frac{(2\ell)!}{\ell! 2^\ell}
(-1)^{m+r_\Xi}
\,2^{-A-2m}
\langle
\nu^{A+2m+r_\Xi}\smile\bar e_I,
[\pi_X^{-1}(\bx)\cap \bL^{\vir}_{\ft_n,\fs}]
\rangle.
\end{aligned}
\end{equation}
Finally, we apply the computation from \cite[Lemma 4.12]{FLLevelOne},
where it is proved that
the restriction of $\bar e_I$ to
$\pi_X^{-1}(\bx)\cap \bL^{\vir}_{\ft_n,\fs}$ equals
$(-2)^{-\ell}\nu^\ell$, to rewrite \eqref{eq:LinkPairingAsCohomology4} as
\begin{equation}
\label{eq:LinkPairingAsCohomology5}
\begin{aligned}
{}&\#\left(
\bar\sV(h_1\cdots h_{\delta-2m} x^m)
\cap \bL_{\ft_n,\fs}
\right)
\\
{}&\quad =
\frac{(2\ell)!}{\ell! 2^\ell}
(-1)^{m+r_\Xi+\ell}
\,2^{-A-2m-\ell}
\langle
\nu^{A+2m+r_\Xi+\ell},
[\pi_X^{-1}(\bx)\cap \bL^{\vir}_{\ft_n,\fs}]
\rangle.
\end{aligned}
\end{equation}
Now observe that, because \eqref{eq:DimensionRelations} gives
$$
r_N-r_\Xi=\frac{1}{2}\dim\sM_{\ft_n} - 3\ell = \delta +1 - 3\ell,
$$
the equality \eqref{eq:IndexRelations} implies that
we have
\begin{equation}
\label{eq:IndexDimCounting}
A+2m+r_\Xi+\ell
=
\delta-\ell+r_\Xi
=r_N+2\ell-1.
\end{equation}
Hence, using the branched cover $\tilde f$ of degree
$(-1)^{\ell} 2^{r_N+\ell-1}$ in
Lemma \ref{lem:BranchedCoverOfLink}, we can write
\begin{align*}
{}&\langle
\nu^{A+2m+r_\Xi+\ell},
[\pi_X^{-1}(\bx)\cap \bL^{\vir}_{\ft_n,\fs}]
\rangle
\\
{}&\quad=
(-1)^\ell
2^{-r_N-\ell+1}
\langle
\nu^{r_N+2\ell-1},
\tilde f_*[M_\fs\times\CC\PP^{r_N+2\ell-1}]
\rangle
\\
{}&\quad=
(-2)^\ell
\langle
(1\times\tilde\nu)^{r_N+2\ell-1},[M_\fs\times\CC\PP^{r_N+2\ell-1}]
\rangle
\quad\text{(by \eqref{eq:PullbackOfS1Action})}
\\
{}&\quad=
(-2)^\ell
\langle 1,[M_\fs]\rangle
\times
\langle \tilde\nu^{r_N+2\ell-1},\CC\PP^{r_N+2\ell-1}\rangle,
\end{align*}
where the final identity follows from \cite[Theorem 5.6.13]{Spanier}.
Thus, applying \eqref{eq:ZeroDimSW} and \eqref{eq:NegHyperPlaneFormula} to the preceding expression yields
\begin{equation}
\label{eq:PairingPullbackToBranchCover}
\langle
\nu^{A+2m+r_\Xi+\ell},
[\pi_X^{-1}(\bx)\cap \bL^{\vir}_{\ft_n,\fs}]
\rangle
=
(-1)^{r_N+1+\ell}2^\ell \SW_X(\fs).
\end{equation}
Combining \eqref{eq:LinkPairingAsCohomology5} and \eqref{eq:PairingPullbackToBranchCover}
implies that under the assumptions \eqref{eq:AssumptionsOnh}
on $h_u$,
\begin{equation}
\label{eq:LinkPairing6}
\begin{aligned}
{}&\#\left(
\bar\sV(h_1\cdots h_{\delta-2m} x^m)
\cap \bL_{\ft_n,\fs}
\right)
\\
{}&\quad =
\frac{(2\ell)!}{\ell! 2^\ell}
(-1)^{m+r_\Xi+r_N+1}
2^{-A-2m}\SW_X(\fs).
\end{aligned}
\end{equation}
Comparing \eqref{eq:MultilinearVersion1a} and \eqref{eq:LinkPairing6}
gives
\begin{align*}
{}&
(-1)^A
\frac{SW_X(\fs)A!(2\ell)!}{(\delta-2m)!}
a_{A,0,\ell}(\chi_h(X),c_1^2(X),0,0,m,\ell)
\\
{}&\quad =
\frac{(2\ell)!}{\ell! 2^\ell}
(-1)^{m+r_\Xi+r_N+1}
2^{-A-2m}\SW_X(\fs),
\end{align*}
which we solve to get
\begin{equation}
\label{eq:SolvingLeadingCoeff}
a_{A,0,\ell}(\chi_h(X),c_1^2(X),0,0,m,\ell)
=
\frac{(\delta-2m)!}{\ell!A! }
(-1)^{A+m+r_\Xi+r_N+1}
2^{-A-2m-\ell}.
\end{equation}
Equation \eqref{eq:IndexRelations} implies that $-A-2m-\ell=\ell-\delta$,
while
\eqref{eq:IndexDimCounting} implies that
$$
A+m+r_\Xi+r_N+1
\equiv
\ell+m\pmod 2.
$$
Hence, \eqref{eq:SolvingLeadingCoeff} yields the
desired equality \eqref{eq:HighestPowerOfIntersectionForm} and this completes the proof of Proposition \ref{prop:LeadingTerm}.
\end{proof}

\section{Vanishing coefficients}
\label{sec:CoeffOfVanishing}
We now determine the coefficients $a_{i,0,k}$ with $i\ge c(X)-3$ appearing in
\eqref{eq:VanishingCobordismReducedForm}.  Although, as pointed out in Remark \ref{rmk:AdditionalParameterInCoeff},
the coefficients in \eqref{eq:VanishingCobordismReducedForm} are not those determined in
\cite[Proposition 4.8]{FL6}, the techniques used in the proof of \cite[Proposition 4.8]{FL6} also determine
the coefficients $a_{i,0,k}$ with $i\ge c(X)-3$ appearing in
\eqref{eq:VanishingCobordismReducedForm}.

\begin{prop}
\label{prop:DeterminingCoeff}
Continue the hypothesis and notation of Theorem \ref{thm:VanishingCobordism} and assume that
\begin{equation}
\label{eq:ParityAssumption}
n\equiv 1\pmod 2.
\end{equation}
Then for $p\ge c(X)-3$ and
an integer $k\geq 0$ such that
$p+2k=c(X)+4\chi_h(X)-3n-1-2m$,
$$
a_{p,0,k}(\chi_h(X),c_1^2(X),0,0,m,2\chi_h-n)=0.
$$
\end{prop}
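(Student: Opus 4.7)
The plan is to adapt the argument of \cite[Proposition 4.8]{FL6}, which determines the analogous universal coefficients in the $\SO(3)$-monopole link-pairing formula under the specialization $\delta=\tfrac{1}{2}\dim M^w_\ka$. The fundamental point is that the coefficients $a_{i,0,k}(\chi_h,c_1^2,0,0,m,\ell)$ in Theorem \ref{thm:SWLinkPairing} depend only on the listed homotopy invariants and not on the specific manifold $X$, so their values can be pinned down by evaluating the cobordism formula \eqref{eq:CobordismFormulaIntForm} on any convenient test manifold sharing the correct $(\chi_h,c_1^2)$.

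First, I would choose a family of \emph{abundant} standard test manifolds $X_0$ (in the sense of \cite{FKLM}) with $\chi_h(X_0)=\chi_h(X)$ and $c_1^2(X_0)=c_1^2(X)$; blow-ups of elliptic or Dolgachev surfaces supply a sufficiently rich set. By \cite{FKLM} each such $X_0$ has superconformal simple type, so $\SW^{w,i}_{X_0}(h)=0$ for $i\le c(X_0)-4$, and by Mochizuki's formula combined with \cite{Goettsche_Nakajima_Yoshioka_2011} Witten's Conjecture \ref{conj:WittenSimpleType} is known for them. I would then apply the cobordism formula \eqref{eq:CobordismFormulaIntForm} to $X_0$ with the \spinu structure $\ft$ of Lemma \ref{lem:CharClassOfSpinuWithLa0} (so that $c_1(\ft)=0$), but choosing $\delta$ so that the left-hand side of \eqref{eq:CobordismFormulaIntForm} equals the Donaldson polynomial $D^w_{X_0}(h^{\delta-2m}x^m)$ rather than vanishing. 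Expanding $D^w_{X_0}$ by Witten's formula \eqref{eq:DInvarForWC} and matching coefficient-by-coefficient against the right-hand side of \eqref{eq:CobordismFormulaIntForm}, expressed via Theorem \ref{thm:SWLinkPairing}, produces a linear system whose solution explicitly determines each universal coefficient $a_{i,0,k}$ in the range $i\ge c(X_0)-3$.

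The remaining step is to transfer this computation from the FL6 specialization $\delta=\tfrac{1}{2}\dim M^w_\ka$ to the setting of Theorem \ref{thm:VanishingCobordism}, in which $\delta=\tfrac{1}{2}\dim\bL^{\asd}_{\ft_n}$ exceeds $\tfrac{1}{2}\dim M^w_\ka$ by $n-1$ and the level $\ell=2\chi_h-n$ becomes an additional argument of the coefficients, as recorded in Remark \ref{rmk:AdditionalParameterInCoeff}. The parity hypothesis \eqref{eq:ParityAssumption} is exactly what keeps the congruence $\delta\equiv -w^2-3\chi_h\pmod 4$ required for Witten's Conjecture to apply valid under this shift of $\delta$ on the test-manifold side. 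Carrying out the evaluation at the parameter $(m,2\chi_h-n)$, the coefficients $a_{p,0,k}$ with $p\ge c(X)-3$ and $p+2k=c(X)+4\chi_h(X)-3n-1-2m$ collapse to zero: in this range the extracted combinatorial expression records a `virtual' Donaldson contribution at a value of $\delta$ for which the Donaldson polynomial already vanishes by the same dimension-counting argument that underlies Proposition \ref{prop:VanishingASDPairing}.

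The hard part will be the combinatorial bookkeeping: tracking how the multinomial coefficients $\tfrac{(\delta-2m)!}{2^{k-m}k!i!}$ from Witten's formula \eqref{eq:DInvarForWC} interact with the cobordism-side universal coefficients under the $\ell$-shift, and verifying that the $\ell$-dependent corrections to the FL6 output do not leave residual non-vanishing contributions in the range $p\ge c(X)-3$. The parity condition $n\equiv 1\pmod 2$ is essential precisely here, ensuring that the vanishing occurs with a definite sign rather than through fragile cancellations between competing multinomial terms.
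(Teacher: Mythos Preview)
Your proposal has a genuine gap, and the detour through Witten's Conjecture is both unnecessary and unhelpful. In the regime of Theorem \ref{thm:VanishingCobordism} one has $\delta=\tfrac{1}{2}\dim\bL^{\asd}_{\ft_n}>\tfrac{1}{2}\dim M^w_\ka$ with $\eta_c=0$, so the Donaldson side of \eqref{eq:CobordismFormulaIntForm} is already identically zero by Proposition \ref{prop:VanishingASDPairing}. There is no ``Donaldson polynomial'' to expand by \eqref{eq:DInvarForWC}; importing Mochizuki and \cite{Goettsche_Nakajima_Yoshioka_2011} adds nothing. The ``transfer'' you describe, from the $\delta=d_a$ specialization in \cite{FL6} to the present $\delta=d_a+n-1$, would require relating the universal coefficients at parameters $(m,\ell)$ across different values of $\delta-2m$, and you give no mechanism for this beyond the admission that it is the ``hard part''.

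What the paper actually does is much more direct and uses neither Donaldson invariants nor Witten's Conjecture. One applies the \emph{same} vanishing cobordism identity \eqref{eq:CobordismFormulaIntForm4} (equivalently \eqref{eq:VanishingCobordismReducedForm}) not to an arbitrary abundant manifold but to a carefully chosen test manifold $X_q(r)$: the $r$-fold blow-up of a Fintushel--Park--Stern manifold with $c(X_q)=3$, where $q=\chi_h(X)$ and $r=c(X)-3$. By the blow-up formula the basic classes are $K_\varphi=K+\sum_u(-1)^{\pi_u(\varphi)}e_u^*$, and the crucial point is that $\{K,e_1^*,\dots,e_r^*,Q_{X_q(r)}\}$ is algebraically independent in the sense of Lemma \ref{lem:AlgCoeff}. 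After using the parity hypothesis \eqref{eq:ParityAssumption} to combine the $\pm K$ terms (this is where $n$ odd enters, not as a congruence for Witten's Conjecture), one expands each $\langle K_\varphi,h\rangle^i$ multinomially in $K,e_1^*,\dots,e_r^*$ and sums over $\varphi\in(\ZZ/2\ZZ)^r$. Algebraic independence then forces the coefficient of $\langle K,h\rangle^{p-r}\prod_u\langle e_u^*,h\rangle\, Q^k$ to vanish; a short computation shows this coefficient is a nonzero multiple of $a_{p,0,k}$ whenever $p\ge r=c(X)-3$. The abundance condition on your test manifold only gives $\SW^{w,i}_{X_0}=0$ for $i\le c-4$, which is precisely the wrong range: you would still be left with a nontrivial linear relation among the $a_{p,0,k}$ for $p\ge c-3$ and no way to separate them.
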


We prove Proposition \ref{prop:DeterminingCoeff} by showing that
on certain standard four-manifolds,
the vanishing result
\eqref{eq:VanishingCobordismReducedForm}
forces each of the coefficients in the sum to be zero
by using the following generalization of \cite[Lemma VI.2.4]{FrM}.

\begin{lem}
\label{lem:AlgCoeff}
\cite[Lemma 4.1]{FL6}
Let $V$ be a finite-dimensional real vector space.
Let $T_1,\dots,T_n $ be linearly independent elements of the dual
space $V^*$.
Let $Q$ be a quadratic form on $V$ which is non-zero
on $\cap_{i=1}^n\Ker T_i$.  Then $T_1,\dots,T_n,Q$ are algebraically
independent in the sense that if
$F(z_0,\dots,z_n)\in \RR[z_0,\dots,z_n]$
and $F(Q,T_1,\dots,T_n):V\to\RR$ is the zero map, then $F(z_0,\dots,z_n)$
is the zero element of $\RR[z_0,\dots,z_n]$.
\end{lem}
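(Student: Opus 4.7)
The plan is to argue by contradiction, by exhibiting a one-parameter family of vectors along which the hypothetical polynomial relation would force the top-degree coefficient of $F$ (regarded as a polynomial in $z_0$) to vanish identically. Suppose $F \in \RR[z_0,z_1,\ldots,z_n]$ is nonzero and $F(Q,T_1,\ldots,T_n)$ vanishes on all of $V$. First I would write
\begin{equation*}
F(z_0,z_1,\ldots,z_n) = \sum_{\alpha=0}^N c_\alpha(z_1,\ldots,z_n)\, z_0^\alpha,
\qquad c_N \neq 0,
\end{equation*}
and decompose $V = W \oplus U$, where $W := \bigcap_{i=1}^n \Ker T_i$ and $U$ is any chosen linear complement. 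The linear independence of $T_1,\ldots,T_n$ forces $T_1|_U, \ldots, T_n|_U$ to be a basis of $U^*$ (any dependence on $U$ would, together with the vanishing on $W$, yield a dependence on $V$), so the map $u \mapsto (T_1(u),\ldots,T_n(u))$ is a linear isomorphism $U \to \RR^n$.

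Next, I would use the hypothesis that $Q$ is not identically zero on $W$ to select $w_0 \in W$ with $Q(w_0) \neq 0$, and then consider the one-parameter deformation $v = tw_0 + u$ for $t \in \RR$ and $u \in U$. Since $T_i(w_0) = 0$, each $T_i(v) = T_i(u)$ is independent of $t$, while for $B$ the symmetric bilinear form polarizing $Q$,
\begin{equation*}
Q(v) = t^2 Q(w_0) + 2t\,B(w_0,u) + Q(u)
\end{equation*}
is a quadratic polynomial in $t$ with nonzero leading coefficient $Q(w_0)$. Consequently $F(Q(v),T_1(v),\ldots,T_n(v))$ is a polynomial $P_u(t) \in \RR[t]$ of degree at most $2N$, and the assumed identity forces $P_u \equiv 0$ for every $u \in U$.

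Finally, I would extract the $t^{2N}$-coefficient of $P_u(t)$: only the $\alpha = N$ summand of $F$ contributes to this top degree, and the leading term of $Q(v)^N$ in $t$ is $Q(w_0)^N\, t^{2N}$, so the coefficient of $t^{2N}$ in $P_u(t)$ is
\begin{equation*}
c_N\bigl(T_1(u),\ldots,T_n(u)\bigr)\, Q(w_0)^N.
\end{equation*}
Vanishing of $P_u$ for every $u \in U$ and $Q(w_0) \neq 0$ then give $c_N(T_1(u),\ldots,T_n(u)) = 0$ for all $u \in U$; since $(T_1,\ldots,T_n)$ realizes a bijection $U \to \RR^n$, the polynomial $c_N$ vanishes as a function on $\RR^n$ and hence, as a polynomial in $\RR[z_1,\ldots,z_n]$, must be zero, contradicting $c_N \neq 0$. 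The only subtle point is verifying that $T_1|_U,\ldots,T_n|_U$ form a basis of $U^*$ (which is where the linear independence of the $T_i$ is used in full), but once this is in hand the argument is a clean application of the ``leading coefficient in $t$'' trick, and no further genuine obstacle is expected.
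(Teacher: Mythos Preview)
Your argument is correct. The paper itself does not supply a proof of this lemma; it is quoted from \cite[Lemma 4.1]{FL6} (and is noted to generalize \cite[Lemma VI.2.4]{FrM}), so there is no in-paper proof to compare against. Your approach---writing $F$ as a polynomial in $z_0$ with coefficients in $\RR[z_1,\dots,z_n]$, splitting $V=W\oplus U$ with $W=\bigcap_i\Ker T_i$, and extracting the top $t$-coefficient along the ray $tw_0+u$---is a standard and clean way to establish this kind of algebraic independence, and all steps are justified. The one point you flag as subtle, that $T_1|_U,\dots,T_n|_U$ form a basis of $U^*$, is handled correctly: any linear relation among the restrictions extends to a relation on $V$ since each $T_i$ already vanishes on $W$.
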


In \cite[Section 4.2]{FL6}, we used the manifolds constructed by
Fintushel, Park and Stern  in \cite{FSParkSympOneBasic}
to give the following family of standard
four-manifolds.

\begin{lem}
\label{lem:ExampleManifolds}
For every integer $q\ge 2$, there is a standard four-manifold
$X_q$ of Seiberg--Witten simple type satisfying
\begin{subequations}
\begin{align}
\label{eq:CharNumOfExamples}
{}& \chi_h(X_q)=q \quad\hbox{and}\quad c(X_q)=3,
\\
\label{eq:BasicClassesOfExamples}
{}& B(X_q)=\{\pm K\} \quad\hbox{and}\quad K\neq 0,
\\
\label{eq:IntFormNonZeroInExamples}
{}& \text{The restriction of $Q_{X_q}$ to $\Ker K\subset H_2(X_q;\RR)$ is non-zero.}
\end{align}
\end{subequations}
\end{lem}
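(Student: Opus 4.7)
The strategy is to invoke the Fintushel--Park--Stern construction from \cite{FSParkSympOneBasic} directly; the lemma is essentially a bookkeeping exercise once one has the right input family. Specifically, Fintushel, Park, and Stern produce, for each integer $q\ge 2$, a simply-connected symplectic four-manifold $X_q$ with $b^+(X_q)$ odd and $\ge 3$, having exactly one pair of Seiberg-Witten basic classes $\{\pm K\}$ (where $K$ is the canonical class), and with characteristic numbers $\chi_h(X_q)=q$ and $c_1^2(X_q)=q-3$. The plan is then to verify each of the four conditions in the lemma for this family.

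First I would verify that $X_q$ is standard: simple-connectedness gives $b^1(X_q)=0$, while the parameters in the Fintushel--Park--Stern construction can be arranged so that $b^+(X_q)$ is odd and at least $3$. Seiberg--Witten simple type follows from Taubes's theorem (symplectic four-manifolds with $b^+\ge 2$ have simple type). The identity $c(X_q)=\chi_h(X_q)-c_1^2(X_q)=q-(q-3)=3$ is immediate from \eqref{eq:CharNumbers} and the choice of the construction, which establishes \eqref{eq:CharNumOfExamples}.

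Next I would address \eqref{eq:BasicClassesOfExamples}. The equality $B(X_q)=\{\pm K\}$ is precisely the ``one basic class up to sign'' property established in \cite{FSParkSympOneBasic} together with Taubes's identification of the canonical class as a basic class. For $K\ne 0$: if $q\ge 4$ then $K^2=c_1^2(X_q)=q-3\ge 1$ so $K\ne 0$ automatically; for $q=2,3$ one uses the explicit model, noting that the Fintushel--Park--Stern manifolds are minimal of general type (or fake elliptic in appropriate range), so that $K$ is necessarily non-trivial in $H^2$.

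Finally, for \eqref{eq:IntFormNonZeroInExamples}, the argument is elementary. Since $b^+(X_q)\ge 3$, the intersection form $Q_{X_q}$ is positive definite on a subspace $P\subset H_2(X_q;\RR)$ with $\dim P\ge 3$. The kernel of the linear functional $\langle K,\cdot\rangle:H_2(X_q;\RR)\to\RR$ has codimension at most one, so $P\cap\Ker K$ has dimension at least two; in particular $Q_{X_q}|_{\Ker K}$ is non-zero (indeed positive on a two-plane). The main obstacle --- such as it is --- lies not in the verification itself but in citing the Fintushel--Park--Stern construction at the level of precision needed, in particular confirming that their parameterization covers every $q\ge 2$ with the prescribed $\chi_h$ and $c_1^2$; this was already carried out in \cite[Section~4.2]{FL6}, so I would simply appeal to that treatment.
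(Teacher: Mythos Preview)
Your proposal is correct and matches the paper's approach: the paper does not actually prove this lemma but simply prefaces it with a citation to \cite[Section 4.2]{FL6} and the Fintushel--Park--Stern construction \cite{FSParkSympOneBasic}, which is exactly what you invoke. Your verification of \eqref{eq:IntFormNonZeroInExamples} via the dimension count on a maximal positive subspace is in fact the same argument the paper uses elsewhere (in the proof of Corollary \ref{cor:LeadingTermPolarizedOfSWLinkPairing}), so your added detail is entirely in keeping with the paper's methods.
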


We write the blow-up of $X_q$ at $r$ points as $X_q(r)$, so
\begin{align*}
\chi_h(X_q(r)) &= \chi_h(X_q)=q,
\\
c_1^2(X_q(r)) &= c_1^2(X_q)-r,
\\
c(X_q(r)) &= c(X_q)+r=r+3,
\end{align*}
where we recall from \eqref{eq:CharNumbers} that $c(X) := \chi_h(X)-c_1^2(X)$.
We consider both the homology and cohomology of $X_q$ as subspaces
of the homology and cohomology of $X_q(r)$, respectively.
Let $e_u^*\in H^2(X_q(r);\ZZ)$ be the Poincar\'e dual of the $u$-th exceptional class.
Let $\pi_u:(\ZZ/2\ZZ)^r\to\ZZ/2\ZZ$ be projection onto the $u$-th factor.
For $\varphi\in (\ZZ/2\ZZ)^r$ and $K\in B(X_q)$, we define
\begin{equation}
\label{eq:Definitions_Kvarphi_and_K0}
K_\varphi := K+\sum_{u=1}^r (-1)^{\pi_u(\varphi)}e_u^*
\quad\hbox{and}\quad
K_0 := K+\sum_{u=1}^r e_u^*.
\end{equation}
Then,
by the blow-up formula for Seiberg--Witten invariants
\cite[Theorem 14.1.1]{Froyshov_2008},
\begin{equation}
\label{eq:BlowUpBasicClasses}
\begin{aligned}
B'(X_q(r))
&=
\{ K_\varphi:\varphi\in (\ZZ/2\ZZ)^r\},
\\
\SW_{X_q(r)}(K_\varphi) &= \SW_{X_q}(K).
\end{aligned}
\end{equation}
In preparation for our application of Lemma \ref{lem:AlgCoeff}, we have the

\begin{lem}
\label{lem:AlgIndSetOnBlowUp}
Let $q\ge 2$ and $r\ge 0$ be  integers.
Let $X_q(r)$ be the blow-up of the four-manifold $X_q$ given in Lemma \ref{lem:ExampleManifolds}
at $r$ points.  Then the set
$$
\{K,e_1^*,\dots,e_r^*,Q_{X_q(r)}\}
$$
is algebraically independent in the sense of Lemma \ref{lem:AlgCoeff}
for the vector space $H_2(X_q(r);\RR)$.
\end{lem}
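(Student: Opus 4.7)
The plan is to verify directly the two hypotheses of Lemma \ref{lem:AlgCoeff}: linear independence of $\{K, e_1^*, \dots, e_r^*\}$ in $H_2(X_q(r);\RR)^*$, and non-vanishing of $Q_{X_q(r)}$ on the common kernel of these functionals.

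First I would set up the standard direct-sum decomposition
\[
H_2(X_q(r);\RR) \;=\; H_2(X_q;\RR) \,\oplus\, \bigoplus_{u=1}^{r}\RR\, e_u ,
\]
where $e_u$ denotes the $u$-th exceptional class, so that the intersection form decomposes as
$Q_{X_q(r)} = Q_{X_q} \oplus \langle -1\rangle^{\oplus r}$, with $e_u\cdot e_v = -\delta_{uv}$ and $\alpha\cdot e_u = 0$ for every $\alpha\in H_2(X_q;\RR)$. I would then view $K$ and the $e_u^*$ as linear functionals on $H_2(X_q(r);\RR)$ via the evaluation pairing, noting that under the decomposition above we have $\langle e_u^*, e_v\rangle = -\delta_{uv}$, $\langle e_u^*, \alpha\rangle = 0$ for $\alpha \in H_2(X_q;\RR)$, and $\langle K, e_u\rangle = 0$ since $K \in H^2(X_q;\ZZ)$.

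Next I would check linear independence. Suppose $aK + \sum_{u=1}^r b_u e_u^* = 0$ as a functional on $H_2(X_q(r);\RR)$. Pairing with $e_v$ yields $-b_v = 0$ for each $v$, so all $b_u$ vanish, and then $aK = 0$; since $K \neq 0$ by \eqref{eq:BasicClassesOfExamples}, we conclude $a = 0$.

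Then I would compute the common kernel. Because $\langle e_u^*, \cdot\rangle$ is precisely the coefficient of $e_u$ in the decomposition, $\bigcap_{u=1}^{r}\Ker e_u^* = H_2(X_q;\RR)$, and hence
\[
\Ker K \cap \bigcap_{u=1}^{r}\Ker e_u^* \;=\; \{h\in H_2(X_q;\RR):\langle K,h\rangle = 0\} \;=\; \Ker K|_{H_2(X_q;\RR)} .
\]
Since $Q_{X_q(r)}$ restricts to $Q_{X_q}$ on the $H_2(X_q;\RR)$ summand, hypothesis \eqref{eq:IntFormNonZeroInExamples} of Lemma \ref{lem:ExampleManifolds} gives that $Q_{X_q(r)}$ is non-zero on this common kernel. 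An application of Lemma \ref{lem:AlgCoeff} then yields the algebraic independence of $\{K,e_1^*,\dots,e_r^*,Q_{X_q(r)}\}$.

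The argument is essentially bookkeeping once the blow-up decomposition is in hand; the only ``obstacle'' is pedantic care in distinguishing $K$ as a cohomology class from $K$ as a linear functional on homology, and in invoking \eqref{eq:IntFormNonZeroInExamples} for the non-degeneracy on the final kernel intersection rather than attempting to verify it from scratch for $X_q(r)$.
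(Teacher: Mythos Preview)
Your proof is correct and follows essentially the same approach as the paper's own argument: verify the hypotheses of Lemma~\ref{lem:AlgCoeff} by checking linear independence of $K,e_1^*,\dots,e_r^*$ and identifying the common kernel with $\Ker K\subset H_2(X_q;\RR)$, then invoke \eqref{eq:IntFormNonZeroInExamples}. The only quibble is the phrase ``precisely the coefficient of $e_u$'' --- since $\langle e_u^*,e_v\rangle=-\delta_{uv}$ it is the \emph{negative} of that coefficient --- but this does not affect the kernel computation or the conclusion.
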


\begin{proof}
The cohomology classes $K,e_1^*,\dots,e_r^*$ are linearly independent
in $H^2(X_q(r);\RR)$.
The restriction of $Q_{X_q(r)}$ to the intersection of the kernel of
these cohomology classes equals the restriction of $Q_{X_q}$ to the kernel of $K$ in $H_2(X_q;\RR)$,
which is non-zero by \eqref{eq:IntFormNonZeroInExamples}.  Hence,
Lemma \ref{lem:AlgCoeff} implies that $\{K,e_1^*,\dots,e_r^*,Q_{X_q(r)}\}$ is
algebraically independent.
\end{proof}

\begin{proof}[Proof of Proposition \ref{prop:DeterminingCoeff}]
Because $c(X)\ge 3$, if $q=\chi_h(X)$ and $r=c(X)-3\ge 0$, then
\begin{equation}
\label{eq:chih_and_c1squared_for_X_and_Xq(r)_equal}
\chi_h(X)=\chi_h(X_q(r))
\quad\hbox{and}\quad
c_1^2(X)=c_1^2(X_q(r))
\end{equation}
by Lemma \ref{lem:ExampleManifolds} and so
\begin{equation}
\label{eq:a_i0k_coefficients_for_X_and_Xq(r)_equal}
a_{i,0,k}(\chi_h(X),c_1^2(X),0,0,m,\ell)
=
a_{i,0,k}(\chi_h(X_q(r)),c_1^2(X_q(r)),0,0,m,\ell).
\end{equation}
As in the proof of Theorem \ref{thm:VanishingCobordism},
the assumptions on $m$ and $n$ allow us to apply
the cobordism formula \eqref{eq:CobordismFormulaIntForm}
with a \spinu structure $\ft_n$ on $X_q(r)$
satisfying \eqref{eq:CharClassOfSpinuWithLa0},
$\tilde w\in H^2(X_q(r);\ZZ)$ characteristic,
\begin{align*}
\delta &:= c(X)+4\chi_h(X)-3n-1
\quad\hbox{(from \eqref{eq:vanishing_cobordism_theorem_proof_delta_choice})}
\\
&\,= c(X_q(r))+4\chi_h(X_q(r))-3n-1
\quad\hbox{(by \eqref{eq:chih_and_c1squared_for_X_and_Xq(r)_equal}),}
\end{align*}
and $\ell(\ft_n,\fs)=2\chi_h(X_q(r))-n$ from \eqref{eq:LevelOfSWInLa=0}
to get (see \eqref{eq:CobordismFormulaIntForm4})
\begin{equation}
\label{eq:VanishingSumB1}
\begin{aligned}
0{}&=
\sum_{K\in B(X_q(r))}
(-1)^{\frac{1}{2}(\tilde w^2+\tilde w\cdot K)}SW'_{X_q(r)}(K)
\\
{}&\qquad\times
\sum_{\begin{subarray}{l}i+2k\\=\delta-2m\end{subarray}}
a_{i,0,k}(\chi_h(X_q(r)),c_1^2(X_q(r)),0,0,m,\ell)
\langle K,h\rangle^i
Q_{X_q(r)}(h)^k.
\end{aligned}
\end{equation}
Because $\SW_X(-K)=(-1)^{\chi_h(X)}\SW_X(K)$ by \cite[Corollary 6.8.4]{MorganSWNotes},
the set $B(X_q(r))$ is closed under the action of $\{\pm 1\}$.
Let $B'(X_q(r))$ be a fundamental domain for
the action of $\{\pm 1\}$ on
$B(X_q(r))$.  We will rewrite \eqref{eq:VanishingSumB1}
as a sum over
$B'(X_q(r))$ by combining the terms given by $K$ and $-K$.  First observe that
\begin{align*}
\frac{1}{2}(\tilde w^2+\tilde w\cdot(- K))
{}&\equiv
\frac{1}{2}(\tilde w^2+\tilde w\cdot K) + \tilde w\cdot K \pmod 2
\\
{}&\equiv
\frac{1}{2}(\tilde w^2+\tilde w\cdot K)+ K^2\pmod 2\quad\text{(because $\tilde w$ is characteristic)}
\\
{}&\equiv
\frac{1}{2}(\tilde w^2+\tilde w\cdot K)+ c_1^2(X)\pmod 2\quad\text{(by  \eqref{eq:SWST})}.
\end{align*}
Combining this equality with $\SW_X(-K)=(-1)^{\chi_h(X)}\SW_X(K)$ yields
\begin{align*}
{}&(-1)^{\frac{1}{2}(\tilde w^2+\tilde w\cdot K)}SW'_{X_q(r)}(K)\langle K,h\rangle^i
+
(-1)^{\frac{1}{2}(\tilde w^2+\tilde w\cdot(- K))}SW'_{X_q(r)}(-K)\langle -K,h\rangle^i
\\
{}&\quad=
(-1)^{\frac{1}{2}(\tilde w^2+\tilde w\cdot K)}SW'_{X_q(r)}(K)\langle K,h\rangle^i \left(1+(-1)^{c_1^2(X)+\chi_h(X)+i}\right)
\\
{}&\quad=
(-1)^{\frac{1}{2}(\tilde w^2+\tilde w\cdot K)}SW'_{X_q(r)}(K)\langle K,h\rangle^i
\left(1+(-1)^{c(X)+i}\right).
\end{align*}
Because $n\equiv 1\pmod 2$ by our
assumption \eqref{eq:ParityAssumption}, we have $\delta=c(X)+4\chi_h(X)-3n-1\equiv c(X)\pmod 2$,
so $\delta=i+2k\equiv c(X)\pmod 2$ implies that $c(X)+i\equiv 0\pmod 2$. Hence,
the preceding identity simplifies to give
\begin{equation}
\label{eq:CombiningTerms2}
\begin{aligned}
{}&(-1)^{\frac{1}{2}(\tilde w^2+\tilde w\cdot K)}SW'_{X_q(r)}(K)\langle K,h\rangle^i
+
(-1)^{\frac{1}{2}(\tilde w^2+\tilde w\cdot(- K))}SW'_{X_q(r)}(-K)\langle -K,h\rangle^i
\\
{}&\quad=
(-1)^{\frac{1}{2}(\tilde w^2+\tilde w\cdot K)}2\,SW'_{X_q(r)}(K)\langle K,h\rangle^i.
\end{aligned}
\end{equation}
Equation \eqref{eq:CombiningTerms2} allows us to rewrite \eqref{eq:VanishingSumB1} as
\begin{equation}
\label{eq:VanishingSumB'}
\begin{aligned}
0{}&=
\sum_{K\in B'(X_q(r))}
(-1)^{\frac{1}{2}(\tilde w^2+\tilde w\cdot K)}SW'_{X_q(r)}(K)
\\
{}&\qquad\times
\sum_{\begin{subarray}{l}i+2k\\=\delta-2m\end{subarray}}
2a_{i,0,k}(\chi_h(X_q(r)),c_1^2(X_q(r)),0,0,m,\ell)
\langle K,h\rangle^i
Q_{X_q(r)}(h)^k.
\end{aligned}
\end{equation}
If we abbreviate $a_{i,0,k}=a_{i,0,k}(\chi_h(X_q(r)),c_1^2(X_q(r)),0,0,m,\ell)$
and
$$
\eps(\tilde w,K_\varphi)=\frac{1}{2}(\tilde w^2+\tilde w\cdot K_\varphi),
$$
and use the description of $B'(X_q(r))$ in \eqref{eq:BlowUpBasicClasses}, then
\eqref{eq:VanishingSumB'} yields
\begin{equation}
\label{eq:VanishingSumB'2}
0=
\sum_{\varphi\in (\ZZ/2\ZZ)^r}
\sum_{\begin{subarray}{l}i+2k\\=\delta-2m\end{subarray}}
(-1)^{\eps(\tilde w,K_\varphi)}\SW'_{X_q(r)}(K_\varphi)
2a_{i,0,k}
\langle K_\varphi,h\rangle^i
Q_{X_q(r)}(h)^k.
\end{equation}
To apply Lemma \ref{lem:AlgCoeff} to \eqref{eq:VanishingSumB'2}
and get information about the coefficients $a_{i,0,k}$,
we will replace $B'(X_q(r))$ with the set $\{K,e_1^*,\dots,e_r^*\}$
appearing in Lemma \ref{lem:AlgIndSetOnBlowUp}.

Because $\tilde w\in H^2(X_q(r);\ZZ)$ is characteristic,
$\tilde w\cdot e_u^*\equiv (e_u^*)^2\equiv 1\pmod 2$.
Hence, the expression
$$
\eps(\tilde w,K_\varphi)
\equiv
\eps(\tilde w,K_0)
+\sum_{u=1}^r\frac{1}{2} \left((-1)^{\pi_u(\varphi)}-1\right)\tilde w\cdot e_u^* \pmod 2
$$
simplifies to give
\begin{equation}
\label{eq:OrientationChange}
\eps(\tilde w,K_\varphi)
\equiv
\eps(\tilde w,K_0)+\sum_{u=1}^r\pi_u(\varphi) \pmod 2.
\end{equation}
Using the definition \eqref{eq:Definitions_Kvarphi_and_K0} of $K_\varphi$,
we expand the factor $\langle K_\varphi,h\rangle^i$ in \eqref{eq:VanishingSumB'2} as
\begin{equation}
\label{eq:BinomExp}
\langle K_\varphi,h\rangle^i
=
\sum_{i_0+\cdots+i_r=i}
(-1)^{\sum_{u=1}^r \pi_u(\varphi)i_u}
\binom{i}{i_0 \cdots i_r}
\langle K,h\rangle^{i_0}
\prod_{u=1}^r \langle e_u^*,h\rangle^{i_u}.
\end{equation}
Substituting \eqref{eq:OrientationChange} and
\eqref{eq:BinomExp} into \eqref{eq:VanishingSumB'2},
yields
\begin{equation}
\label{eq:VanishingSumB'3}
\begin{aligned}
0&=
(-1)^{\eps(\tilde w,K_0)}\SW_{X_q}'(K)
\sum_{\varphi\in (\ZZ/2\ZZ)^r}
\sum_{\begin{subarray}{c}i_0+\cdots+i_r+2k\\=\delta-2m\end{subarray}}
\binom{i_0+\cdots+i_r}{i_0\cdots i_r}
(-1)^{\sum_{u=1}^r (1+i_u)\pi_u(\varphi)}
\\
{}&\qquad\times
2a_{i,0,k}
\langle K,h\rangle^{i_0}
\prod_{u=1}^r \langle e_u^*,h\rangle^{i_u}
Q_{X_q(r)}(h)^k.
\end{aligned}
\end{equation}
By Lemma \ref{lem:AlgIndSetOnBlowUp},
the set $\{K,e_1^*,\dots,e_r^*,Q_{X_q(r)}\}$
is algebraically independent and so the monomials
$$
K^{i_0} \left(\prod_{u=1}^r (e_u^*)^{i_u}\right) Q_X^k
$$
are linearly independent.
For the integer $p$ appearing in the statement of Proposition \ref{prop:DeterminingCoeff},
we have $p\ge c(X)-3$ by assumption, so $p\ge r$ by
the equality $r=c(X)-3$ preceding \eqref{eq:chih_and_c1squared_for_X_and_Xq(r)_equal}.
Hence, equation \eqref{eq:VanishingSumB'3}
and Lemma \ref{lem:AlgCoeff}
imply that the coefficient
of the term
$$
\langle K,h\rangle^{p-r}
\prod_{u=1}^r\langle e_u^*,h\rangle
Q_{X_q(r)}(h)^k
$$
in \eqref{eq:VanishingSumB'3}
must vanish.
Because $i_u=1$ for $u=1,\dots,r$ in this term and $p = i_0+\cdots+i_r$,
we can write this
coefficient as
\begin{align*}
{}&(-1)^{\eps(\tilde w,K_0)}\SW_{X_q}'(K)
\frac{p!}{(p-r)!}
\sum_{\varphi\in (\ZZ/2\ZZ)^r}
(-1)^{\sum_{u=1}^r 2\pi_u(\varphi)}
2a_{p,0,k}
\\
{}&\quad=
(-1)^{\eps(\tilde w,K_0)}\SW'_{X_q}(K)
\frac{p!}{(p-r)!}
\sum_{\varphi\in (\ZZ/2\ZZ)^r}
2a_{p,0,k}
\\
{}&\quad=
(-1)^{\eps(\tilde w,K_0)}\SW_{X_q}'(K)
\frac{p!2^{r+1}}{(p-r)!}a_{p,0,k}.
\end{align*}
Hence, the coefficient $a_{p,0,k}$ must vanish, as asserted,
and this concludes the proof of Proposition \ref{prop:DeterminingCoeff}.
\end{proof}

\section{Proof of the main result}
\label{sec:MainProof}

We will prove Theorem \ref{thm:SCST} by applying
the computations of the coefficients in
Proposition \ref{prop:LeadingTerm} and
Proposition \ref{prop:DeterminingCoeff} to the vanishing sum \eqref{eq:VanishingCobordismReducedForm}.

To apply Proposition \ref{prop:LeadingTerm}, we need
to assume that there is a class $K\in B(X)$ with $K\neq 0$.
We can make this assumption if we can replace $X$ with its blow-up
$\widetilde X$.
In the following lemma, we show that the superconformal simple type
condition is invariant under blow-up, allowing us to make the desired replacement
of $X$ with $\widetilde X$ in the proof of Theorem \ref{thm:SCST}.

\begin{lem}
\label{lem:SCSTBlowUp}
Let $X$ be a standard four-manifold of Seiberg--Witten simple type
with $c(X)\ge 3$.
Then $X$ has superconformal simple type if and only if its blow-up
$\widetilde X$ has superconformal simple type.
\end{lem}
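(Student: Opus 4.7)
The plan is to reduce the equivalence to a single polynomial identity expressing the Seiberg--Witten polynomials of $\widetilde X$ in terms of those of $X$, and then read off each direction of the ``if and only if.''

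First, I will fix blow-up notation. Let $e\in H_2(\widetilde X;\ZZ)$ denote the exceptional class with $e\cdot e=-1$, and $e^*\in H^2(\widetilde X;\ZZ)$ its dual, with $\langle e^*,e\rangle=1$, $\langle e^*,h\rangle=0$ for $h\in H_2(X;\RR)$, and $(e^*)^2=-1$. The decompositions $H^2(\widetilde X;\ZZ)=H^2(X;\ZZ)\oplus \ZZ e^*$ and $H_2(\widetilde X;\RR)=H_2(X;\RR)\oplus\RR e$ are orthogonal, and any characteristic class $\tilde w\in H^2(\widetilde X;\ZZ)$ takes the form $\tilde w=w+c\,e^*$ with $w\in H^2(X;\ZZ)$ characteristic and $c$ odd, while any $\tilde h\in H_2(\widetilde X;\RR)$ decomposes as $\tilde h=h+t\,e$ with $h\in H_2(X;\RR)$ and $t\in\RR$. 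The blow-up formula \eqref{eq:BlowUpBasicClasses} (with $r=1$) gives $B(\widetilde X)=\{K+e^*,\,K-e^*:K\in B(X)\}$ and $\SW'_{\widetilde X}(K\pm e^*)=\SW'_X(K)$; also $\chi_h(\widetilde X)=\chi_h(X)$ and $c_1^2(\widetilde X)=c_1^2(X)-1$, so $c(\widetilde X)=c(X)+1$.

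Second, I will pair the two basic classes $K+e^*$ and $K-e^*$ for each $K\in B(X)$ in the defining sum \eqref{eq:ReduceSWPolyToK} for $\SW_{\widetilde X}^{\tilde w,i}(\tilde h)$. A direct computation gives $\tilde w^2=w^2-c^2$, $\tilde w\cdot(K\pm e^*)=w\cdot K\mp c$, and $\langle K\pm e^*,h+te\rangle=\langle K,h\rangle\pm t$; since $c$ is odd the two signs $(-1)^{(\tilde w^2+\tilde w\cdot(K\pm e^*))/2}$ differ by $(-1)^c=-1$, and the paired contribution for each $K$ becomes
\[
(-1)^{(w^2+w\cdot K)/2+c(c+1)/2}\,\SW'_X(K)\bigl[(\langle K,h\rangle+t)^i-(\langle K,h\rangle-t)^i\bigr].
\]
Expanding the difference of $i$-th powers and summing over $K\in B(X)$ yields the key identity
\begin{equation}
\label{eq:scst_blowup_identity_plan}
\SW_{\widetilde X}^{\tilde w,i}(h+t\,e)
\;=\;
2(-1)^{c(c+1)/2}\sum_{\substack{j\ \mathrm{odd}\\ 1\le j\le i}}\binom{i}{j}\,t^{j}\,\SW_X^{w,\,i-j}(h),
\end{equation}
in which the $K$-dependence has collapsed precisely into the factors $\SW_X^{w,i-j}(h)$ as defined in \eqref{eq:ReduceSWPolyToK}, and the $K$-independent sign $(-1)^{c(c+1)/2}$ has pulled out.

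Third, I will extract the equivalence from \eqref{eq:scst_blowup_identity_plan}. The SCST condition on $\widetilde X$ reads $\SW_{\widetilde X}^{\tilde w,i}(\tilde h)=0$ for $i\le c(\widetilde X)-4=c(X)-3$, while on $X$ it reads $\SW_X^{w,k}(h)=0$ for $k\le c(X)-4$. For the forward implication, assume $X$ has SCST: for $i\le c(X)-3$ and every odd $j\ge 1$, we have $i-j\le c(X)-4$, so every summand on the right of \eqref{eq:scst_blowup_identity_plan} vanishes. For the reverse implication, assume $\widetilde X$ has SCST: the right side of \eqref{eq:scst_blowup_identity_plan} is a polynomial in $t\in\RR$ vanishing identically whenever $i\le c(X)-3$, so each coefficient $\binom{i}{j}\SW_X^{w,i-j}(h)$ must vanish; taking $i=k+1$ and $j=1$ for any $k\le c(X)-4$ recovers $\SW_X^{w,k}(h)=0$, which is SCST on $X$. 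The edge case $c(X)=3$ is vacuous on both sides.

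The only delicate step is the sign bookkeeping in deriving \eqref{eq:scst_blowup_identity_plan}: one must verify that the $K$-dependent phase emerging from the pair $\{K+e^*,K-e^*\}$ matches the phase $(-1)^{(w^2+w\cdot K)/2}$ in the definition \eqref{eq:ReduceSWPolyToK}, while the remaining sign depends only on $\tilde w$ and not on $K$. Once \eqref{eq:scst_blowup_identity_plan} is established, the rest is a routine polynomial identity.
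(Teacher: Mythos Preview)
Your proof is correct and follows essentially the same route as the paper: both derive the blow-up identity expressing $\SW_{\widetilde X}^{\tilde w,i}$ as a $t$-polynomial with coefficients $\SW_X^{w,i-j}(h)$, then read off the vanishing. The only notable difference is that the paper cites \cite[Theorem~7.3.1]{MMPhep} for the forward implication and uses the specific choice $\tilde w=w-e^*$ together with a partial-derivative extraction, whereas you handle both directions self-containedly, allow an arbitrary odd coefficient $c$ in $\tilde w=w+ce^*$, and read off the $t^1$-coefficient directly; your version is slightly cleaner and avoids the external citation.
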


\begin{proof}
If $X$ has superconformal simple type, then so does $\widetilde X$ by
\cite[Theorem 7.3.1]{MMPhep}.  We prove the converse.
If $c(X)\le 3$,
the result is trivial; we will show that if $c(X)\ge 4$ and $\widetilde X$
has superconformal simple type, then $X$ satisfies \eqref{eq:SWPolynomial}.
Note that $c(X)\ge 4$ implies that $c(\widetilde X)\ge 5$, so
$\widetilde X$ having superconformal simple type implies that
$\widetilde X$
satisfies \eqref{eq:SWPolynomial}.

Let $e^*\in H^2(\widetilde X;\ZZ)$ be the Poincar\'e dual
of the exceptional curve.
Let $w\in H^2(X;\ZZ)$  be characteristic, so $\tilde w:=w-e^*\in H^2(\widetilde X;\ZZ)$ is
also characteristic.
By \cite[Theorem 14.1.1]{Froyshov_2008},
$$
B(\widetilde X)=\{K\pm e^*: K\in B(X)\}
\quad\hbox{and}\quad
\SW_{\widetilde X}'(K\pm e^*)=\SW_X'(K).
$$
For $K\in B(X)$,
\begin{align*}
\frac{1}{2}(\tilde w^2+\tilde w\cdot(K+e))&\equiv \frac{1}{2} (w^2+w\cdot K) \pmod 2,
\\
\frac{1}{2}(\tilde w^2+\tilde w\cdot(K-e))&\equiv\frac{1}{2} (w^2+w\cdot K)+1\pmod 2.
\end{align*}
Then, by the expression for $\SW^{w,i}_X$ in \eqref{eq:ReduceSWPolyToK} and applying the sign identities just noted,
\begin{align*}
\SW^{\tilde w, i}_{\widetilde X}(h)
{}&=
\sum_{K\in B(X)}
(-1)^{\frac{1}{2}(\tilde w^2+\tilde w\cdot(K+e))}\SW_{\widetilde X}'(K+e^*)\langle K+e,h\rangle^i
\\
{}&\qquad+
(-1)^{\frac{1}{2}(\tilde w^2+\tilde w\cdot(K-e))}\SW_{\widetilde X}'(K-e^*)\langle K-e,h\rangle^i
\\
{}&=
\sum_{K\in B(X)} (-1)^{\frac{1}{2}(w^2+w\cdot K)}\SW_X'(K)
\sum_{u=0}^i \binom{i}{u} \left( 1 - (-1)^u\right)\langle K,h\rangle^{i-u}\langle e^*,h\rangle^u.
\end{align*}
Thus,
for all $h \in H_2(\widetilde X;\RR)$,
\begin{equation}
\label{eq:SWPolyBlowUp}
\SW^{\tilde w,i}_{\widetilde X}(h)
=
\sum_{u=0}^i \binom{i}{u} \SW^{w,i-u}_X(h) \left( 1 - (-1)^u\right)\langle e^*,h\rangle^u.
\end{equation}
As noted at the beginning of this proof, we can assume that $\widetilde X$ satisfies  \eqref{eq:SWPolynomial},
so
$\SW^{\tilde w,i}_{\widetilde X}$ vanishes for $i\le c(\widetilde X)-4$ by \eqref{eq:SWPolynomial}.
For $h_0\in H_2(X;\RR)$ and $e\in H_2(\widetilde X;\RR)$, the homology class of the exceptional
curve, and $s,t\in\RR$, equation \eqref{eq:SWPolyBlowUp} implies that
\begin{align*}
\SW^{\tilde w,i}_{\widetilde X}(s h_0+t e)
{}&=
\sum_{u=0}^i \binom{i}{u} \SW^{w,i-u}_X(sh_0) \left( 1 - (-1)^u\right)\langle e^*,te\rangle^u
\\
{}&=
\sum_{u=0}^i \binom{i}{u} \SW^{w,i-u}_X(h_0) \left( 1 - (-1)^u\right)(-1)^u s^{i-u}t^u.
\end{align*}
Assume $i\ge 1$.
Because $\SW^{\tilde w,i}_{\widetilde X}$ vanishes for $i\le c(\widetilde X)-4$, we have
\begin{align*}
0{}&=\left.\left(\frac{\rd^i}{\rd s^{i-1}\rd t}\SW^{\tilde w}_{\widetilde X}(s h_0+t e)\right)\right|_{s=t=0}
\\
{}&=-2 i\SW^{w,i-1}_X(h_0).
\end{align*}
Thus, $\SW^{w,i-1}_X$ vanishes for $1\le i\le c(\widetilde X)-4$ or $0\le i-1\le c(X)-4$, as required.
\end{proof}

Half of the polynomials $\SW^{w,i}_X$ vanish for the following trivial reasons.
(This result appears in  the remarks following \cite[Proposition 6.1.3]{MMPhep};
we include it here for completeness.)

\begin{lem}
\label{lem:SWPolyParity}
If $X$ is a standard four-manifold,
$i\ge 0$ is any integer obeying $c(X)+i\equiv 1\pmod 2$, and
$w\in H^2(X;\ZZ)$ is characteristic, then
$\SW_X^{w,i}$ vanishes.
\end{lem}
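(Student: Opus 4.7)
The plan is to exploit the charge-conjugation symmetry of Seiberg-Witten invariants, namely $\SW_X(\bar\fs) = (-1)^{\chi_h(X)}\SW_X(\fs)$, which via the definition \eqref{eq:DefineCohomSW} descends to $\SW_X'(-K) = (-1)^{\chi_h(X)}\SW_X'(K)$ for every $K \in H^2(X;\ZZ)$. In particular $B(X)$ is stable under $K \mapsto -K$, so I would start from the expression \eqref{eq:ReduceSWPolyToK} for $\SW_X^{w,i}(h)$ and group its terms into pairs $\{K, -K\}$.

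For a pair with $K \neq 0$, the two summands combine, after using $\SW_X'(-K) = (-1)^{\chi_h(X)}\SW_X'(K)$ and $\langle -K, h\rangle^i = (-1)^i \langle K, h\rangle^i$, to
\begin{equation*}
(-1)^{\frac{1}{2}(w^2 + w \cdot K)} \SW_X'(K) \langle K, h\rangle^i \Bigl(1 + (-1)^{w \cdot K + \chi_h(X) + i}\Bigr).
\end{equation*}
The key observation is then that $w$ being characteristic gives $w \cdot K \equiv K^2 \pmod 2$, and Seiberg-Witten simple type gives $K^2 = c_1^2(X)$, so the exponent in the bracketed factor simplifies modulo $2$ to $c_1^2(X) + \chi_h(X) + i \equiv c(X) + i \pmod 2$ by the definition $c(X) = \chi_h(X) - c_1^2(X)$ in \eqref{eq:CharNumbers}. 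Under the parity hypothesis $c(X) + i \equiv 1 \pmod 2$, each such pair therefore contributes zero.

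The only remaining contribution is the self-paired term at $K = 0$. For $i \geq 1$ it vanishes automatically because $\langle 0, h\rangle^i = 0$. For $i = 0$ the parity hypothesis forces $c(X)$ odd; and if $0 \in B(X)$, then simple type requires $c_1^2(X) = 0^2 = 0$, hence $\chi_h(X) = c(X)$ is odd, so the identity $\SW_X'(0) = (-1)^{\chi_h(X)} \SW_X'(0) = -\SW_X'(0)$ forces $\SW_X'(0) = 0$. None of these steps presents a real obstacle; the only care required is in handling this boundary $K = 0$, $i = 0$ case.
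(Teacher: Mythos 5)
Your proof is correct and follows essentially the same route as the paper's: pair $K$ with $-K$ in \eqref{eq:ReduceSWPolyToK}, observe that the two terms differ by the factor $(-1)^{\chi_h(X)+w\cdot K+i}$, and use $w$ characteristic together with Seiberg-Witten simple type to reduce the exponent modulo $2$ to $c(X)+i$, which is odd by hypothesis. The only difference is that you explicitly spell out the self-paired $K=0$ case (for $i\ge 1$ trivially, for $i=0$ by showing $\SW_X'(0)=-\SW_X'(0)$), whereas the paper's phrasing that the $K$ and $-K$ terms ``cancel'' silently covers this since a term that differs from itself by a factor of $-1$ must vanish; your extra care here is a welcome clarification but not a different argument.
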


\begin{proof}
Because $\SW_X'(K)=(-1)^{\chi_h(X)}\SW_X'(-K)$ by \cite[Corollary 6.8.4]{MorganSWNotes},
the terms in \eqref{eq:DefineCohomSW}
corresponding to $K$ and $-K$ in $\SW^{w,i}_X$, namely
$$
(-1)^{\frac{1}{2} (w^2+w\cdot K)}\SW_X'(K)\langle K,h\rangle^i
\quad\text{and}\quad
(-1)^{\frac{1}{2} (w^2-w\cdot K)}\SW_X'(-K)\langle -K,h\rangle^i
$$
differ by the factor
$$
(-1)^{\chi_h(X) +w\cdot K+i}.
$$
Because $w$ and $K$ are characteristic,
we have
$w\cdot K\equiv K^2\equiv c_1^2(X)\pmod 2$.  Hence,
\begin{align*}
\chi_h(X)+w\cdot K + i
&\equiv
                         \chi_h(X)+K^2+i
                         \\
&\equiv
                              \chi_h(X)+c_1^2(X)+i
                              \\
&\equiv
c(X)+i\pmod 2.
\end{align*}
Thus, if $c(X)+i\equiv 1\pmod 2$, then the terms for $K$ and $-K$
in $\SW_X^{w,i}$ cancel and the function $\SW_X^{w,i}$ vanishes.
\end{proof}

The vanishing of the sum \eqref{eq:VanishingCobordismReducedForm}
will give information about
the Seiberg--Witten polynomial $\SW_X^{w,A}$ of degree $A=c(X)-n-2m-1$
which appears in this sum with a non-zero coefficient.
We write $A=c(X)-2v$, where
$v$ is a non-negative integer such that
$2v=n+2m+1$ as in the statement of Theorem \ref{thm:VanishingCobordism}, and compute some of the values
for this degree to which we can apply Theorem \ref{thm:VanishingCobordism}
and Proposition \ref{prop:DeterminingCoeff}.
Observe that if $n=3$, then the equality $2v=n+2m+1$ implies that $m=v-2$.

\begin{lem}
\label{lem:RequiredValues}
Let $X$ be a standard four-manifold with $c(X)\ge 3$.
For any $v\in\NN$
with $4\le 2v\le c(X)$,
the natural numbers $n=3$ and $m=v-2$
satisfy
the conditions
\eqref{eq:VanishingSumAssump}
in Theorem \ref{thm:VanishingCobordism} and
the parity condition \eqref{eq:ParityAssumption} in
Proposition \ref{prop:DeterminingCoeff}.
\end{lem}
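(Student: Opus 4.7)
The plan is to verify each of the four numerical conditions by direct substitution of $n=3$ and $m=v-2$ into the inequalities \eqref{eq:VanishingSumAssump1}--\eqref{eq:VanishingSumAssump3} and the parity condition \eqref{eq:ParityAssumption}. First I would check that $m$ is in fact a non-negative integer: since we assume $2v \ge 4$, we have $v \ge 2$, so $m = v-2 \ge 0$. The parity condition \eqref{eq:ParityAssumption} is immediate because $n=3$ is odd, and condition \eqref{eq:VanishingSumAssump2} is immediate because $3 > 1$.

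For condition \eqref{eq:VanishingSumAssump1}, $n \le 2\chi_h(X)$, I would use the fact that for a standard four-manifold (so $b^1(X) = 0$ and $b^+(X) \ge 3$ odd), the standard identity
\[
\chi_h(X) = \frac{1 + b^+(X)}{2}
\]
combined with $b^+(X) \ge 3$ gives $\chi_h(X) \ge 2$, hence $2\chi_h(X) \ge 4 > 3 = n$. For condition \eqref{eq:VanishingSumAssump3} I would simply substitute to obtain
\[
c(X) - n - 2m - 1 = c(X) - 3 - 2(v-2) - 1 = c(X) - 2v,
\]
which is non-negative by the hypothesis $2v \le c(X)$.

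There is no real obstacle here: the lemma amounts to checking that the very specific parameter choice $n=3$, $m=v-2$ (forced by $2v = n + 2m + 1$ with $n=3$) lands inside the numerical range already guaranteed by the hypothesis $4 \le 2v \le c(X)$ together with the standing assumptions on standard four-manifolds. The only non-trivial input is the bound $\chi_h(X) \ge 2$ coming from $b^+(X) \ge 3$, which is a standard consequence of the definitions in \eqref{eq:CharNumbers}.
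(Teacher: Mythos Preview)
Your proposal is correct and follows essentially the same approach as the paper's own proof. The only difference is cosmetic: you explicitly derive the bound $\chi_h(X)\ge 2$ from $b^+(X)\ge 3$ via $\chi_h(X)=(1+b^+(X))/2$, whereas the paper simply asserts $\chi_h(X)\ge 2$ for a standard four-manifold; otherwise the verifications of \eqref{eq:VanishingSumAssump1}--\eqref{eq:VanishingSumAssump3} and \eqref{eq:ParityAssumption} are identical.
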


\begin{proof}
Because $\chi_h(X)\ge 2$ for a standard four-manifold,
$n=3$  will
satisfy the conditions \eqref{eq:VanishingSumAssump1},
\eqref{eq:VanishingSumAssump2}, and \eqref{eq:ParityAssumption}.
The hypothesis $4\le 2v$ implies that $2m=2v-4\ge 0$ while
the hypothesis $2v\le c(X)$ implies that $2m=2v-4\le c(X)-4= c(X)-n-1$, which is the condition
\eqref{eq:VanishingSumAssump3}.
\end{proof}

\begin{rmk}
We note that the requirement $0<n-1$ in \eqref{eq:VanishingSumAssump2} implies that $n\ge 2$
and so $2v=n+1+2m\ge 3$.
Hence, the  methods of this article
do not imply that $\SW_X^{w,i}$
vanishes when $i > c(X) - 3$, which does not hold
in general.
\end{rmk}

\begin{proof}[Proof of Theorem \ref{thm:SCST}]
By Lemma \ref{lem:SCSTBlowUp}, it suffices to prove that the blow-up of $X$
has superconformal simple type.  Because $c_1^2(\widetilde X)=c_1^2(X)-1$,
we can assume $c_1^2(X)\neq 0$ by replacing $X$ with its blow up if necessary.
If we assume $c_1^2(X)\neq 0$ and $K\in B(X)$, then $K^2=c_1^2(X)\neq 0$ by \eqref{eq:SWST},
so $K\neq 0$.  Thus, we can assume $0\notin B(X)$
by replacing $X$ with its blow-up if needed.

We now abbreviate $c=c(X)$
and $\chi_h = \chi_h(X)$.
If $w\in H^2(X;\ZZ)$ is characteristic, then $\SW_X^{w,i}$ vanishes unless
$i\equiv c\pmod 2$  by Lemma \ref{lem:SWPolyParity}.  Thus,
it suffices to prove that $\SW_X^{w,c-2v}=0$ for $4\le 2v\le c$,
which we will do by induction on $v$.

By Lemma \ref{lem:RequiredValues}, the values $n=3$ and $m=v-2$
satisfy the conditions
\eqref{eq:VanishingSumAssump}
in Theorem \ref{thm:VanishingCobordism}.  Substituting these values
into \eqref{eq:VanishingCobordismReducedForm}
(noting that $A = c-n-2m-1 = c-2v$),
yields
\begin{equation}
\label{eq:VanishingCobordismReducedForm1}
0=
\sum_{k=0}^{2\chi_h-3}
a_{c-2v+2k,0,2\chi_h-3-k}
\SW_X^{w,c-2v+2k}(h)
Q_X(h)^{2\chi_h-3+k},
\end{equation}
where the coefficients $a_{i,0,k}$ are defined in \eqref{eq:CoeffAbbreviation}.
Because $n=3$ satisfies the assumption \eqref{eq:ParityAssumption},
Proposition \ref{prop:DeterminingCoeff} implies that
\begin{equation}
\label{eq:VanishingCoeffInCobordism}
a_{c-2v+2k,0,2\chi_h-3-k}=0
\quad
\text{for $2k-2v\ge -3$}.
\end{equation}
Because of our assumption that $0\notin B(X)$, an application of
Proposition \ref{prop:LeadingTerm} with $n=3$ gives
\begin{equation}
\label{eq:NonVanishingCoeffInCobordism}
a_{c-2v,0,2\chi_h-3}\neq 0.
\end{equation}
We now begin the induction on $v$.  If $2v=4$, the identity
\eqref{eq:VanishingCobordismReducedForm1} becomes
\begin{align*}
0{}&=\sum_{k=0}^{2\chi_h-3}
a_{c-4+2k,0,2\chi_h-3-k}
\SW^{w,c-4+2k}_X(h)Q_X(h)^{2\chi_h-3-k}
\\
&=
a_{c-4,0,2\chi_h-3}
\SW^{w,c-4}_X(h)Q_X(h)^{2\chi_h-3} \quad\text{(by \eqref{eq:VanishingCoeffInCobordism})},
\end{align*}
that is,
\begin{equation}
\label{eq:VanishingCobordismSumWithLa=0,v1}
0 = a_{c-4,0,2\chi_h-3} \SW^{w,c-4}_X(h)Q_X(h)^{2\chi_h-3}.
\end{equation}
Because $2v=4$, equations \eqref{eq:NonVanishingCoeffInCobordism} and \eqref{eq:VanishingCobordismSumWithLa=0,v1} imply that
$$
\SW^{w,c-4}_X(h)Q_X(h)^{2\chi_h-3}=0 \quad\text{for all $h\in H_2(X;\RR)$}.
$$
If $Z\subset H_2(X;\RR)$ is the (codimension-one) zero locus of $Q_X$, the preceding equality
implies that the polynomial $\SW^{w,c-4}_X$ vanishes
on the open, dense subset  $H_2(X;\RR)\setminus Z$ of $H_2(X;\RR)$ and hence
$\SW^{w,c-4}_X$ vanishes on $H_2(X;\RR)$, completing the proof of the initial case
of the induction on $v$.

For our induction hypothesis, we assume that $\SW^{w,c-2v'}_X=0$ for all $v'$ with $4\le 2v'<2v\le c$. We split the sum in \eqref{eq:VanishingCobordismReducedForm1} into three terms:
\begin{equation}
\label{eq:VanishingCobordismReducedForm2}
\begin{aligned}
0{}&=
a_{c-2v,0,2\chi_h-3}
\SW^{w,c-2v}_X(h)Q_X(h)^{2\chi_h-3}
\\
{}&\qquad+
\sum_{k=1}^{v-2}
a_{c-2v+2k,0,2\chi_h-3-k}
\SW^{w,c-2v+2k}_X(h)Q_X(h)^{2\chi_h-3-k}
\\
{}&\qquad
+
\sum_{k=v-1}^{2\chi_h-3}
a_{c-2v+2k,0,2\chi_h-3-k}
\SW^{w,c-2v+2k}_X(h)Q_X(h)^{2\chi_h-3-k}.
\end{aligned}
\end{equation}
If either of the two sums in \eqref{eq:VanishingCobordismReducedForm2} are sums over empty
indexing sets, then the notation is meant to indicate that those sums vanish.
We now show that the two sums will vanish even if their indexing sets are non-empty.
If we write $c-2v+2k=c-2(v-k)$ and define $v'=v-k$, then for $1\le k\le v-2$, we have
$v-1\ge v'\ge 2$.  Hence, by our induction hypothesis, we see that
\begin{equation}
\label{eq:VanishingByInduction}
\SW_X^{w,c-2v+2k}
=
\SW_X^{w,c-2v'}=0\quad\text{for $1\le k\le v-2$}.
\end{equation}
If $v-1\le k$, then $2k-2v\ge -3$ and so \eqref{eq:VanishingCoeffInCobordism} implies that
\begin{equation}
\label{eq:VanishingByCoeff}
a_{c-2v+2k,0,2\chi_h-3-k}=0\quad\text{for $v-1\le k\le 2\chi_h-3$}.
\end{equation}
The vanishing results \eqref{eq:VanishingByInduction} and \eqref{eq:VanishingByCoeff}
imply that the two sums in \eqref{eq:VanishingCobordismReducedForm2} vanish
while \eqref{eq:NonVanishingCoeffInCobordism} implies that the coefficient of
the first term on the right-hand-side of \eqref{eq:VanishingCobordismReducedForm2}
is non-zero.  Therefore, the identity \eqref{eq:VanishingCobordismReducedForm2} reduces to
\begin{equation}
\label{eq:VanishingCobordismReducedForm4}
0=
\SW^{w,c-2v}_X(h)Q_X(h)^{2\chi_h-3}.
 \end{equation}
If $Z$ is the zero locus of $Q_X$, then
\eqref{eq:VanishingCobordismReducedForm4} implies that
the polynomial $\SW^{w,c-2v}_X$ vanishes
on the open dense subset $H_2(X;\RR)\setminus Z$ of $H_2(X;\RR)$ and hence
$\SW^{w,c-2v}_X$ vanishes identically, completing the induction and the proof of Theorem \ref{thm:SCST}.
\end{proof}

\begin{proof}[Proof of Corollary \ref{cor:NumberOfBasicClasses}]
The lower bound on the number of basic classes on $X$ is true for manifolds of superconformal simple type by \cite[Theorem 8.1.1]{MMPhep}.  Hence the Corollary follows immediately from Theorem \ref{thm:SCST}.
\end{proof}

\begin{proof}[Proof of Corollary \ref{cor:WC}]
Witten's Conjecture follows from the $\SO(3)$-monopole cobordism formula,
\cite[Theorem 1]{FL5} for standard four-manifolds of superconformal simple type by \cite[Theorem 1]{FL7}.
Thus Corollary \ref{cor:WC} follows from \cite[Theorem 1]{FL7} and Theorem  \ref{thm:SCST}.
\end{proof}

\bibliography{/Users/pfeehan/Dropbox/LATEX/Bibinputs/master}
\bibliographystyle{amsplain-nodash}
\end{document}